 \newtheorem{lemma}{Lemma}
\newtheorem{proposition}{Proposition}
 \newtheorem{theorem}{Theorem}
 \newtheorem{corollary}{Corollary}
 \theoremstyle{definition}
 \newtheorem{definition}{Definition}
 \theoremstyle{remark}
 \newtheorem{remark}{Remark}
\newcommand{\mc}[1]{{\mathcal #1}}
\newcommand{\bb}[1]{{\mathbb #1}}
\newcommand{\<}{\langle}
\renewcommand{\>}{\rangle}
\begin{document}

\title[Anomalous fluctuations]{Current and density fluctuations for interacting particle systems with anomalous diffusive behavior}

\author{M. Jara}
\address{CEREMADE \\ Universit\'e Paris-Dauphine\\
Place du Mar\'echal de Lattre de Tassigny\\
Paris CEDEX 75775\\
France\\
}
\email{jara@ceremade.dauphine.fr}

\subjclass[2000]{65K35,60G20,60F17}
\keywords{Density fluctuations, zero-range process, current fluctuations, random environment, fractional Laplacian, simple exclusion}

\begin{abstract}
We prove density and current fluctuations for two examples of symmetric, interacting particle systems with anomalous diffusive behavior: the zero-range process with long jumps and the zero-range process with degenerated bond disorder. As an application, we obtain subdiffusive behavior of a tagged particle in a simple exclusion process with variable diffusion coefficient.
\end{abstract}

\maketitle

\section{Introduction}

Since the work of Harris \cite{Har}, it is known that the motion of a tagged or distinguished particle in symmetric, diffusive, one-dymensional systems of particles that preserve the relative order of particles is subdiffusive, in the sense that the mean square displacement $E[x_0(t)^2]$ grows with $t$ as $t^{1/2}$, where $x_0(t)$ denotes the position of a tagged particle, initially at the origin.  This is a much slower rate than the linear growth obtained for usual diffusions. In Harris' original work, a system of independent Brownian motions with reflection was considered. These kind of systems are known in the physics literature as {\em single-file diffusions} (see \cite{BS} for a recent discussion and further references). 

In this subdiffusive setting, it has been proved that the rescaled position of the tagged particle converges, in the sense of finite-dimensional distributions, to a fractional Brownian motion (\cite{Arr}, \cite{DF}, \cite{JL} for the simple exclusion process; \cite{RV} for interacting Brownian motions). Recently, a functional central limit theorem has been obtained \cite{PS}. This subdiffusive behavior is characteristic of single-file diffusions; when particles can pass one over the others or in dimension $d>1$, the tagged particle converges to a Brownian motion \cite{KV}. For biased particles, the limit is also diffusive \cite{K}.

From the work of Rost and Vares, we know that the displacement $x_0(t)$ of the tagged particle can be identified as the mass-current through the origin in the {\em increment process} associated to the single-file diffusion. Let $\{x_i(t);i \in \bb Z\}$ be the position of the particles at time $t \geq 0$ in a single-file diffusion. We assume that $x_i(t) \leq x_{i+1}(t)$. Define $\eta_t(i) = x_{i+1}(t)-x_i(t)-\epsilon$, where $\epsilon=0$ in the case of particles evolving on the real line, and $\epsilon =1$ in the case of particles evolving on the lattice. The process $\eta_t =\{\eta_t(i);i \in \bb Z\}$ turns out to have a Markovian evolution with a local dynamics. Let $J_0(t)$ be the accumulated current through the bond $\<-1,0\>$ (of particles in the lattice, and of mass in the continuum). We have the identity $J_0(t) = x_0(t)-x_0(0)$, and therefore the asymptotic behavior of the tagged particle is given by the asymptotic behavior of the current $J_0(t)$ for the process $\eta_t$.

A second identification, also known from the work of Rost and Vares, allows us to obtain the current $J_0(t)$ as a function of the empirical density associated to the process $\eta_t$. At least on a heuristic level, $J_0(t) = \sum_{i \geq 0} \{\eta_t(i) -\eta_0(i)\}$. When the number of particles is finite, this relation is simply an integrated form of the conservation of mass. When the number of particles is infinite, the sum is not absolutely summable. We will see that when the process $\eta_t$ is in {\em equilibrium}, the truncated sums $\sum_{0\leq i<n}  \{\eta_t(i) -\eta_0(i)\}$ form a Cauchy sequence, and the limiting variable is precisely the current $J_0(t)$. 

In this way, the asymptotic behavior of the tagged particle in single-file diffusions can be obtained in terms of the asymptotic behavior of the empirical process associated to the increments of the original process. This approach has been used by various authors \cite{LOV}, \cite{RV}, \cite{BS}, \cite{GJ} in order to obtain a central limit theorem for a tagged particle in single-file diffusions. 

The idea of relating the position of the tagged particle to the current of particles through the origin can also be accomplished for the original process, without considering the increment process. This approach was exploited in great generality in \cite{DGL}, where the authors obtain a functional central limit theorem for the tagged particle in a system of particles with collisions. Besides the collision rule, the evolution is independent. Considering different families of dynamics for the motion of one particle, they obtain in the limit any exponent $0<\gamma<2$ for the mean square displacement $E[x_0(t)^2]$ of the tagged particle. The main drawback of the approach of \cite{DGL} is that it does not generalize to systems of particles with stronger interaction.

For fairly general diffusive systems, the so-called {\em hydrodynamic limit} of the process $\eta_t$ is given by a non-linear heat equation of the form $\partial_t u = \partial_x ( D(u) \partial_x u)$, where $D(u)$ is the {\em bulk} diffusion coefficient, which is given by the Green-Kubo formula. We say that the process $\eta_t$ has an hydrodynamic limit  if the rescaled empirical process $n^{-1} \sum_{x \in \bb Z} \eta_{tn^2}(x)\delta_{x/n}(dx)$ converges in distribution to a deterministic limit of the form $u(t,x)dx$, where $u(t,x)$ is the solution of the hydrodynamic equation  $\partial_t u = \partial_x ( D(u) \partial_x u)$. From now on, we focus on lattice systems, so the empirical process represents the {\em density of particles} in the system. Notice the diffusive time-scaling. In particular, if we start the process with a fixed density of particles, at a macroscopic level the density of particles does not change. When the invariant measures of the process $\eta_t$ have short-range correlations, the spatial fluctuations of the density of particles are of Gaussian nature and they are given, in the macroscopic limit, by $\chi(\rho) \mc W$, where $\mc W$ is a white noise in $\bb R$, $\rho$ is the density of particles and the quantity $\chi(\rho)$ is the {\em static compressibility} of the system. By the fluctuation-dissipation relation, the density fluctuations around a fixed density $\rho$ evolves in a non-trivial way under a diffusive scaling, and they satisfy in the macroscopic limit the infinite-dimensional Ornstein-Uhlenbeck equation
\begin{equation}
\label{ec0}
d \mc Y_t = D(\rho) \Delta \mc Y_t dt + \sqrt{D(\rho)\chi(\rho)}\nabla d\mc W_t,
\end{equation}
where $\mc W_t$ is a space-time white noise. Going back to the representation of the current in terms of the empirical density of particles, we see that putting the Gaussian space scaling and the diffusive time scaling together, $n^{-1/2} J_0(tn^2)$ should converge to $\mc Y_t(H_0) - \mc Y_0(H_0)$, where $H_0(x) = \mathbf 1(x \geq 0)$ is the Heaviside function. Since the process is Gaussian, a simple scaling argument allow us to conclude that $J_0(t)$ approaches a normal distribution of variance $\Theta(\rho) t^{1/2}$ as $t \to \infty$, where $\Theta(\rho) = \chi(\rho)/\sqrt{D(\rho)}$. 

Recently (\cite{ASLL}, \cite{BS}), the following question has been posed. What happens with the tagged particle if each particle has its own, different diffusion constant? It turns out that \cite{GJ} under mild conditions, the behavior is the same as before: $E[x_0(t)^2]=\Theta(\rho)t^{1/2}$, but now the diffusion coefficient $D(\rho)$ is given by an homogenization formula. 

In this article we are interested in systems on which the asymptotic variance of the current (and therefore of  the tagged particle) is of order $t^{\gamma}$ for $\gamma \neq 1/2$. From a mathematical point of view, the universality of the $\gamma=1/2$ value is pleasant and satisfactory, but this is not a good fact from the point of view of modeling. There is also experimental and numeric evidence supporting values $\gamma <1/2$ for the variance growth of the tagged particle in some extreme situations. From a physical point of view, only values of $\gamma$ in $(0,1)$ are expected. In fact, if at a small time window we observe a positive increment on the current $J_0(t)$, this means that the density of particles at the right of the origin is larger than the density at left, and therefore we expect the current to have negative increments in the near future. This means that $J_0(t)$ has {\em negatively correlated increments}, restricting ourselves to concave functions for the variance of $J_0(t)$. 

Notice that the question about current fluctuations can be posed for any one-dimensional system, related or not to a single-file diffusion. Therefore, in this article we pose the question about the asymptotic behavior of  current and density fluctuations for general symmetric, one-dimensional particle systems. In particular, we propose two classes of models which will allow us to find a central limit theorem for $J_0(t)$ in the full range of scales $\gamma \in (0,1)$. We recall now that our heuristic derivation of the $\gamma =1/2$ law is very robust, we have only assumed that the static fluctuations are normal, that the hydrodynamic limit is diffusive, and that the fluctuation-dissipation relation holds. Following the same scheme, we will obtain a different value of $\gamma$ if the hydrodynamic limit of the process $\eta_t$ holds in a  {\em non-diffusive} scaling. If this is the case, we say that we are in prensence of an {\em anomalous diffusion}. In recent works, two symmetric models on which  anomalous diffusion occurs have been introduced. In \cite{FJL}, a simple exclusion process with degenerated bond disorder has been introduced. The simple exclusion process is just a system of simple, symmetric random walks in $\bb Z$, conditioned to never overlap. This is probably the most studied example of a single-file diffusion. Bond disorder is introduced in the following way. Let $\{\xi_x;x \in \bb Z\}$ be a sequence of i.i.d., positive  random variables. Assume that the common distribution is on the domain of attraction of an $\alpha$-stable law, $\alpha \in (0,1)$. For simplicity, take $\xi_x \geq 1$ for any $x \in \bb Z$. This does not alter the tail behavior of $\xi_x$, which turns out to be the relevant part.
We put a wall of size $\xi_x$ between sites $x$ and $x+1$, meaning that each time a particle tries to jump from $x$ to $x+1$ or viceversa (and if the jump is allowed by the exclusion rule), the jump is accomplished with probability $\xi_x^{-1}$. Due to the heavy tails of $\xi_x$, the dynamics is dramatically slowed down. In fact, the correct time scaling is $n^{1+1/\alpha}$, which is always bigger than $n^2$, and the hydrodynamic equation is $\partial_t u = \partial_x \partial_W u$, where $W$ is an $\alpha$-stable subordinator corresponding to the scaling limit of the walls and $\partial_W$ (also denoted by $d/dW$ in the sequel) denotes the inverse of the Stieltjes integral with respect to $W$. As we can see, the randomness of the environment is so strong that it survives in the limit, and even the hydrodynamic equation depends on the corresponding realization of the environment. This scaling is robust in the sense that stronger interaction between particles does not lead to a different time scaling \cite{FL}.

Looking back to the heuristic formula for $\gamma$, we obtain $\gamma = \alpha/(1+\alpha)<1/2$ for any $\alpha \in (0,1)$, so the current in this model should satisfy $E[J_0(t)^2] \sim t^{\alpha/(1+\alpha)}$ for large $t$. Instead of considering the simple exclusion process with bond disorder, in this article we study the {\em zero-range process} with degenerated bond disorder. In this way we emphasize that the subdiffusive behavior holds regardless of the details of the local interaction. For the simple exclusion process with variable diffusion coefficient defined in \cite{GJ}, the increment process is precisely a zero-range process with bond disorder. Therefore, a central limit theorem for the current through the origin leads to a central limit theorem for the tagged particle in this last model, which falls into the category of single-file diffusions.

A second example of a symmetric system with non-diffusive hydrodynamic limit is the zero-range process with long jumps \cite{Jar}. In this system particles interact between them only when they share positions, and the jump probability of the underlying random walk satisfies a power law: $p(z) =c/|z|^{1+\alpha}$, $\alpha \in (0,2)$. In this case the correct scaling limit is $n^\alpha$, which is always smaller than $n^2$, and the hydrodynamic equation is of the form $\partial_t u = \Delta_\alpha \varphi(u)$, where $\Delta_\alpha = -(-\Delta)^{\alpha/2}$ is the fractional Laplacian and $\varphi(\cdot)$is a function encoding the interaction between particles. In this case, the heuristic formula gives $\gamma = 1/\alpha$, which is always bigger than $1/2$. Notice that for $\alpha \leq 1$, this formula gives $\gamma \geq 1$. It turns out that for $\alpha \leq 1$, the current through the origin is not well defined, since in any time window $[t,t+h]$ there is an infinite number of particles crossing from one side of the origin to the other in that time window. 

Since the particle jumps are not restricted to nearest-neighbors, it is not possible to find a single-file diffusion for which the increment process falls into this class. Therefore, for any $\gamma \in (0,1)$, we have a model for which the current of particles should satisfy $E[J_0(t)^2] \sim t^\gamma$, but for the tagged particle problem we have a model for which $E[x_0(t)^2] \sim t^\gamma$ only when $\gamma \leq 1/2$. 

Although the heuristic plan looks simple and it has been accomplished in the diffusive case for many examples, anomalous diffusive behavior poses difficulties that are absent for diffusive systems. The main technical difficulty relies on establishing the fluctuation-dissipation theorem for the process $\eta_t$. In the superdiffusive case, the main obstacle is that the fractional Laplacian $\Delta_\alpha$ does not leave invariant the Schwartz space $\mc S(\bb R)$ of test functions, and therefore the classical construction of generalized Ornstein-Uhlenbeck processes due to Holley and Stroock \cite{HS} does not apply. Solutions of (\ref{ec0}) can be constructed using the formalism of Gaussian process for fairly general driving, non-positive operators $\mc L$ \cite{AC}. However, this construction is not suitable for proving convergence when (as it is the case here) it is not easy to show that the limiting process has Gaussian distributions regardless of initial conditions. Section \ref{s1} is basically a recall of  \cite{DFG} and \cite{DG}, where powerful methods have been developed to prove such convergence theorem, very much in the spirit of \cite{HS}. We include this section with no new results for the reader's convenience, since we are not aware of previous results applying these ideas to lattice systems. In particular a notion for uniqueness of equation (\ref{ec0}) is stated, and the notion of intermediate spaces is introduced. 

In the subdiffusive case, even the definition of the corresponding Ornstein-Uhlenbeck process poses a challenge. In fact, besides constant functions, there are no smooth functions on the domain of the operator $\mc L_W = \partial_x \partial_W$. Moreover, for any two realizations of the subordinator $W$, the domains of the corresponding operators $\mc L_W$ have in common only constant functions. The new material on this article starts at Section \ref{s2}. In Section \ref{s2} we construct a nuclear Fr\'echet space $\mc F = \mc F_W$ which will serve as a test space in order to define the generalized Ornstein-Uhlenbeck process associated to the operator $\mc L_W$. Our key input is a compactness result for weighted-Sobolev spaces, very much in the spirit of the definition of the usual weighted-Sobolev spaces in $\bb R$. Our construction works for any increasing, unbounded function $W$, and could be of independent interest. In Section \ref{s3} we give detailed definitions of the zero-range process wiht random environment and with long jumps. We also state our main results concerning the asymptotic behavior of the density and the current of particles. In Section \ref{s4} we obtain the fluctuations of the density in the superdiffusive case. In Section \ref{s5} we obtain the fluctuations of the density in the subdiffusive case. A key intermediate result is the so-called {\em energy estimate}, which roughly says that the space-time fluctuations of a given function can be estimated by the Dirichlet form associated to the underlying random walk. This result holds true for any reversible system, regardless of the super or subdiffusive behavior of the system. The universality of this estimate is more evident in Section \ref{s7}, where we prove fluctuations for the current of particles through the origin in both super and subdiffusive cases. We finish this article in Section \ref{s7} by obtaining the fluctuations of a tagged particle in the simple exclusion process with variable diffusion coefficient, as a direct consequence of the results in Section \ref{s6}. We point out that all our results in the subdiffusive case, applies for any process $W(x)$, stochastic or not, such that $\lim_{n \to \pm \infty} W(x) = \pm \infty$ and such that the environment has a version converging almost surely to $W(x)$.

\section{Generalized Ornstein-Uhlenbeck processes}
\label{s1}

In this section we give a precise definition to what we mean by a generalized, or infinite-dimensional, Ornstein-Uhlenbeck process and we state some conditions for uniqueness of such processes. All the material in this section has been taken from \cite{DFG} and \cite{DG}. The interested reader can find a more detailed exposition and further applications in those articles.

\begin{remark}
Throughout this article, we use the denomination ``Proposition'' for result that have been proved elsewhere. We reserve the denomination ``Theorem'' for original results.
\end{remark}

\subsection{Preliminary definitions}

Let $\mc L^2(\bb R^d)$ be the Hilbert space of square integrable functions $\varphi: \bb R^d \to \bb R$. Let $\mc L: D(\mc L) \subseteq \mc L^2(\bb R^d) \to \mc L^2(\bb R^d)$ be the generator of a strongly continuous contraction semigroup $\{S_t; t \geq 0\}$ in $\mc L^2(\bb R^d)$. Let $\{||\cdot||_n\}_n$ be an increasing family of (not necessarily finite) norms in $\mc L^2(\bb R^d)$ with a common kernel $\mc F_0$ such that $||\varphi||_0^2 = \int \varphi(x)^2dx$. Let $\mc F \subseteq \mc L^2(\bb R^d)$ be the Fr\'echet space generated by $\{||\cdot||_n\}_n$, that is, the completion of $\mc F_0$ under the metric 
\[
 d(\varphi,\psi) = \sum_{n \geq 0} 2^{-n} \big(||\varphi -\psi||_n \wedge 1\big).
\]

Let us denote by $\mc F'$ the topological dual of $\mc F$. We can construct $\mc F'$ in such a way that $\mc F \subseteq \mc L^2(\bb R^d) \subseteq \mc F'$ and such that the inner product $\<\cdot,\cdot\>$ in $\mc L^2(\bb R^d)$ restricted to $\mc F_0 \times \mc F_0$ can be continuously extended to a continuous bilinear form $\<\cdot,\cdot\> : \mc F \times \mc F' \to \bb R$. We assume that the space $\mc F$ is {\em nuclear}, that is, for any $n \geq 0$ there exists $m>n$ such that any $||\cdot||_m$-bounded set is a $||\cdot||_n$-compact set. 
In what follows we will always consider a family of norms $||\cdot||_n$ for which the set $\{\varphi \in \mc L^2(\bb R^d); ||\varphi||_n<+\infty\}$ is a Hilbert space under $||\cdot||_n$, although this point is not essential.
Our objective is to describe some conditions under which existence and uniqueness of solutions can be established for the stochastic equation
\begin{equation}
\label{ec1}
 d \mc Y_t = \mc L^* \mc Y_t dt + d\mc Z_t,
\end{equation}
where $\mc Z_t$ is a given semimartingale in $\mc F'$ and $\mc L$ is a given, maybe unbounded, operator in $\mc L^2(\bb R^d)$. The canonical example of a nuclear, Fr\'echet space is the Schwartz space $\mc S(\bb R^d)$ of test functions in $\bb R^d$. In this case,
\[
 ||\varphi||_n = \bigg\{ \sum_{|k| \leq n} \int_{\bb R^d} (1+x^2)^n\big(\partial^k\varphi(x)\big)^2 dx \bigg\}^{1/2},
\]
where $k$ denotes a multi-index $(k_1,\dots,k_d)$ with  $|k|=k_1+\cdots+k_d$ and $\partial^k = \prod_i \partial_{x_i}^{k_i}$. The dual of $\mc S(\bb R^d)$ is the space $\mc S'(\bb R^d)$ of tempered distributions and a common kernel for each norm $||\cdot||_n$ is the set $\mc C_c^\infty(\bb R^d)$ of infinitely differentiable functions in $\bb R^d$ of compact support. 

For a given topological space $\mc E$, we denote by $\mc D([0,T],\mc E)$ the space of c\`adl\`ag trajectories in $\mc E$. For simplicity, we consider a finite time interval $[0,T]$; results for $[0,\infty)$ will follow from standard extension arguments. For $a<b$, we denote by $\mc C^\infty(a,b)$ the space of $\mc C^\infty$ functions in $[a,b]$ with support contained in $(a,b)$. The space $\mc C^\infty(a,b)$ is a nuclear Fr\'echet space with respect to the topology of uniform convergence on compacts of the function and its derivatives of any order. Notice that functions in $\mc C^\infty(a,b)$ vanish at $t=a,b$. 

Let us denote by $\mc F_{0,T}$ the tensor product $\mc F \otimes \mc C^\infty(0,T)$. For a clear exposition about tensor products, see \cite{Tre}. This space is also a Fr\'echet space. We denote by $\mc F_{0,T}'$ the topological dual of $\mc F_{0,T}$. Now we state some technical lemmas which will be useful in the identification of solutions of (\ref{ec1}).

\begin{lemma}
 For any trajectory $x_\cdot$ in $\mc D([0,T],\mc F')$, let $\tilde x \in \mc F'_{0,T}$ be defined by
 \[
  \<\tilde x, \psi\> = \int_0^T \<x_t, \psi_t \> dt
 \]
 for any $\psi \in \mc F_{0,T}$. Then the mapping $x_\cdot \mapsto \tilde x$ is a continuous mapping from the space $\mc D([0,T],\mc F')$ into $\mc F'_{0,T}$.
\end{lemma}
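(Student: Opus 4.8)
The plan is to derive everything from a single structural fact: the range of a c\`adl\`ag trajectory is an equicontinuous subset of $\mc F'$. I would establish this first. For each fixed $\varphi \in \mc F$ the real-valued map $t \mapsto \<x_t, \varphi\>$ is c\`adl\`ag, hence bounded on the compact interval $[0,T]$; thus $\{x_t : t \in [0,T]\}$ is $\sigma(\mc F',\mc F)$-bounded. Since $\mc F$ is Fr\'echet, hence barrelled, the Banach--Steinhaus theorem promotes this pointwise boundedness to equicontinuity: there are an index $n$ and a constant $C<\infty$ with
\[
 |\<x_t,\varphi\>| \le C\,||\varphi||_n \qquad \text{for all } t\in[0,T],\ \varphi\in\mc F .
\]
Equivalently, the whole trajectory lies in the ball of radius $C$ of the Hilbert space $H_{-n}$ dual to $||\cdot||_n$.

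Given this bound, well-definedness and continuity of $\tilde x$ are routine. For a basic tensor $\psi=\varphi\otimes f$ one has $\psi_t=f(t)\varphi$, so $t\mapsto\<x_t,\psi_t\>=f(t)\<x_t,\varphi\>$ is measurable and bounded by $C\,||\varphi||_n\sup_t|f(t)|$, and therefore $|\<\tilde x,\psi\>|\le C\,T\,||\varphi||_n\sup_t|f(t)|$. Because $\mc F$ is nuclear, $\mc F_{0,T}$ is the completed (projective) tensor product, so a general $\psi$ admits a representation $\psi=\sum_i\lambda_i\,\varphi_i\otimes f_i$ with $\sum_i|\lambda_i|<\infty$, $\varphi_i\to 0$ in $\mc F$ and $f_i\to 0$ in $\mc C^\infty(0,T)$; this makes $\psi_t=\sum_i\lambda_i f_i(t)\varphi_i$ a well-defined $\mc F$-valued, continuous-in-$t$ function and lets the estimate extend to the projective tensor seminorm. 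Hence $\tilde x$ is continuous, i.e. $\tilde x\in\mc F'_{0,T}$.

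For the continuity of the assignment $x_\cdot\mapsto\tilde x$ I would argue sequentially, the Skorokhod space being metrizable. If $x^{(k)}\to x$ in $\mc D([0,T],\mc F')$, then each real trajectory $\<x^{(k)}_\cdot,\varphi\>$ converges in $J_1(\bb R)$, and $J_1$-convergent sequences are uniformly bounded in the supremum norm; a second application of Banach--Steinhaus then gives a bound $|\<x^{(k)}_t,\varphi\>|\le C'||\varphi||_{n'}$ uniform in both $k$ and $t$. Now $J_1$-convergence forces $x^{(k)}_t\to x_t$ at every continuity point of the limit trajectory, a co-countable (hence full-measure) set of $t$, so $\<x^{(k)}_t,\psi_t\>\to\<x_t,\psi_t\>$ for almost every $t$; since the integrands are dominated by the integrable function $t\mapsto C'||\psi_t||_{n'}$, dominated convergence yields $\<\tilde x^{(k)},\psi\>\to\<\tilde x,\psi\>$ for every $\psi$. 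Finally, the uniform bound confines all the $\tilde x^{(k)}$ to one equicontinuous subset of $\mc F'_{0,T}$, and since $\mc F_{0,T}$ is nuclear, hence Montel, on such a set weak-$*$ convergence coincides with convergence in $\mc F'_{0,T}$.

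The main obstacle is precisely this last step: the map is linear, but the Skorokhod topology is not a vector topology and only controls trajectories up to a vanishing time reparametrization, so I cannot simply combine linearity with the boundedness of $\tilde x$. The crux is to pass to the limit inside the time integral, and this is where I would spend the effort --- exploiting that $t\mapsto\psi_t$ is continuous, that the limit trajectory has at most countably many discontinuities, and that the equicontinuity bound is uniform along the convergent sequence, so that dominated convergence applies. Everything preceding this, by contrast, is a clean consequence of Banach--Steinhaus and the tensor-product structure of $\mc F_{0,T}$.
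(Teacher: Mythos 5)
The paper gives no proof of this lemma at all: Section \ref{s1} is explicitly a recollection from \cite{DFG} and \cite{DG}, and the statement is quoted there without argument. So your proposal cannot be matched against an in-paper proof; judged on its own, it is essentially correct and complete. The architecture is the natural one: Banach--Steinhaus on the barrelled space $\mc F$ turns the pointwise boundedness of a c\`adl\`ag trajectory into the uniform bound $|\<x_t,\varphi\>|\le C||\varphi||_n$, which simultaneously gives well-definedness of $\tilde x$ (via the projective tensor representation of $\psi$), the domination needed for the limit, and the equicontinuity of $\{\tilde x^{(k)}\}_k$ that upgrades weak-$*$ convergence to convergence in $\mc F'_{0,T}$. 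Two small points deserve tightening, and both are repaired by the very equicontinuity you prove. First, $\mc D([0,T],\mc F')$ is not metrizable when $\mc F'$ carries the strong topology (e.g.\ $\mc F=\mc S(\bb R)$), so the reduction to sequences is not automatic; but a convergent net, like a single trajectory, is confined to an equicontinuous subset of $\mc F'$, i.e.\ a ball of some $H_{-n}$, on which the topology is metrizable, so sequential continuity on these subspaces is all that is needed (and all that is used in the paper, where the measures are regularized onto such balls). Second, your claim that the limit trajectory has at most countably many discontinuities is not immediate for paths valued in a non-metrizable space; it again follows because the trajectory lies in one equicontinuous set, where the induced topology is metrizable and the usual jump-counting argument applies. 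With these two remarks made explicit, the argument is a clean, self-contained proof of the lemma.
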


\begin{lemma}
\label{l2}
 Let $x_\cdot$ be a process in $\mc D([0,T],\mc F')$, $a.s.$ continuous at $t=T$. Then the distributions of $x_\cdot$, $\tilde x$ determine each other.
\end{lemma}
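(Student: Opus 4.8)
The statement asserts a two-way determination, so I would treat the two implications separately. One direction is essentially formal: since the map $x_\cdot \mapsto \tilde x$ of the previous lemma is continuous, hence Borel measurable, the law of $\tilde x$ is the pushforward of the law of $x_\cdot$, and is therefore determined by it. The whole content lies in the converse, namely that the law of $\tilde x \in \mc F'_{0,T}$ determines the law of the c\`adl\`ag trajectory $x_\cdot$. My plan is to recover the finite-dimensional distributions of $x_\cdot$ from $\tilde x$ by a mollification in the time variable, and then invoke the standard fact that on $\mc D([0,T],\mc F')$ the law is determined by its finite-dimensional distributions.

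The key construction is as follows. Fix $\varphi \in \mc F$ and $t \in (0,T)$, and let $\theta_\epsilon \in \mc C^\infty(0,T)$ be a nonnegative approximate identity concentrating at $t$ (supported in $(t-\epsilon,t+\epsilon)$, of unit mass). Then $\psi_\epsilon := \varphi\otimes\theta_\epsilon \in \mc F_{0,T}$, and by definition $\<\tilde x,\psi_\epsilon\> = \int_0^T \<x_s,\varphi\>\theta_\epsilon(s)\,ds$. Since $x_\cdot$ is c\`adl\`ag in $\mc F'$, the scalar path $s \mapsto \<x_s,\varphi\>$ is c\`adl\`ag, hence has at most countably many discontinuities, and at every continuity point $t$ one has $\<\tilde x,\psi_\epsilon\> \to \<x_t,\varphi\>$ as $\epsilon \to 0$. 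Thus $\<x_t,\varphi\>$ is realized as an almost sure limit of functionals of $\tilde x$ alone. A Fubini argument shows that the discontinuity set being a.s. Lebesgue-null forces, for Lebesgue-a.e. $t$, that $t$ is almost surely a continuity point; choosing a countable dense family of $\varphi$'s (available since $\mc F$ is separable) and excluding a Lebesgue-null set $N$ of times gives a dense set $[0,T]\setminus N$ of \emph{good} times.

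I would then argue that the good times determine the law. For good times $t_1<\dots<t_k$ and test functions $\varphi_1,\dots,\varphi_k$, the vector $(\<x_{t_j},\varphi_j\>)_j$ is the almost sure limit, as $\epsilon\to 0$, of the measurable function $(\<\tilde x,\varphi_j\otimes\theta_\epsilon^{t_j}\>)_j$ of $\tilde x$; hence its law is the weak limit of laws depending only on the law of $\tilde x$. Consequently, if two trajectory laws induce the same law of $\tilde x$, their finite-dimensional distributions at good times, and by density of the $\varphi_j$'s for all test functions, coincide. To pass from good times to all times I use path regularity: for $t\in[0,T)$ I approximate from the right, $s_n\downarrow t$ with $s_n\notin N$, and right-continuity of $x_\cdot$ transfers the agreement to $t$; for the endpoint $t=T$ I approximate from the left, $s_n\uparrow T$ with $s_n\notin N$, and here the agreement transfers precisely because $x_\cdot$ is a.s. continuous at $T$, so $x_{T^-}=x_T$.

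The main obstacle, and the reason for the hypothesis, is exactly this endpoint. Functions in $\mc C^\infty(0,T)$ vanish at $t=T$, so the mollification can only probe $x_\cdot$ through left limits near $T$; the raw procedure recovers $x_{T^-}$ rather than $x_T$, and without continuity at $T$ the value $x_T$ (which is genuinely part of a path in $\mc D([0,T],\mc F')$) would be invisible to $\tilde x$. The continuity assumption removes this gap. The remaining points are routine: the interchange of limit and integral in the mollification, the measurability of the intermediate functionals (immediate from the continuity of $x_\cdot\mapsto\tilde x$), and the reduction of equality of laws to equality of finite-dimensional distributions on the Skorokhod space.
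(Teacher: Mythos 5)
The paper does not actually prove this lemma: Section \ref{s1} is explicitly a recollection from \cite{DFG} and \cite{DG}, and Lemma \ref{l2} is stated there without argument. So there is no in-paper proof to compare against; judged on its own, your argument is correct and supplies exactly the missing content. The forward direction is, as you say, immediate from measurability of $x_\cdot \mapsto \tilde x$; the converse by time-mollification recovers the finite-dimensional distributions, and you correctly locate the role of the hypothesis at the endpoint, where test functions in $\mc C^\infty(0,T)$ vanish and $\tilde x$ can only see $x_{T-}$. Two small remarks. First, the Fubini/``good times'' detour is avoidable: if you take $\theta_\epsilon$ supported in $(t,t+\epsilon)$ rather than symmetric about $t$, right-continuity gives $\<\tilde x, \varphi\otimes\theta_\epsilon\> \to \<x_t,\varphi\>$ for \emph{every} $t\in[0,T)$ and every $\varphi$, with no exceptional null set of times, and only $t=T$ needs the left-limit argument. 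Second, if you do keep the good-times route, note that the co-null set $[0,T]\setminus N$ is built from the law of $x_\cdot$, not of $\tilde x$; when comparing two processes with the same $\tilde x$-law you should intersect their two co-null sets of good times (still co-null, hence dense), which is what your phrase ``at good times'' implicitly requires. With those points understood, the reduction to equality of finite-dimensional distributions on $\mc D([0,T],\mc F')$ is the standard fact you cite, and the proof is complete.
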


\begin{definition}
We say that a family $\{X_f; f \in \mc F\}$ of integrable random variables is a linear random functional if $X_f$ is linear and continuous as a function of $f$. 
\end{definition}

\begin{lemma}
\label{l2.1}
 Let $X_f$ be a linear random functional. Then, there exists a unique random variable $X$ in $\mc F'$ such that $\<X,f\> = X_f$ $a.s.$ for every $f \in \mc F$. 
\end{lemma}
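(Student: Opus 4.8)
The plan is to split the statement into uniqueness, which is elementary, and existence, which is a regularization result and carries all the weight. For uniqueness I would use that a nuclear Fr\'echet space is separable: fix a countable $\mc F$-dense set $\{f_k\} \subseteq \mc F$. If $X, X' \in \mc F'$ both represent the functional, then $\<X, f_k\> = X_{f_k} = \<X', f_k\>$ $a.s.$ for each $k$, and intersecting these countably many full-measure events makes the identity hold simultaneously for all $k$ on a set of probability one. On that set $f \mapsto \<X, f\>$ and $f \mapsto \<X', f\>$ are continuous and agree on a dense set, hence agree on $\mc F$, so $X = X'$ almost surely.

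For existence I would first convert continuity into a quantitative estimate. Since $f \mapsto X_f$ is linear and continuous from the Fr\'echet space $\mc F$ into $L^1(\bb P)$ and the topology of $\mc F$ is generated by the increasing norms $\{\|\cdot\|_n\}$, there exist an index $n$ and a constant $C < \infty$ with $\bb E|X_f| \le C\|f\|_n$ for all $f \in \mc F$. Writing $\mc H_n = \{\varphi : \|\varphi\|_n < \infty\}$ for the associated Hilbert spaces and $\mc H_{-n}$ for their duals, I have $\mc F \subseteq \mc H_m$ and $\mc H_{-m} \subseteq \mc F'$ continuously for every $m$. By nuclearity in its trace-class form I would choose $m > n$ for which the canonical inclusion $\mc H_m \hookrightarrow \mc H_n$ is nuclear. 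Since $\mc F_0$ is dense in $\mc H_m$ and $\|\cdot\|_n \le \|\cdot\|_m$, the bound above lets me extend $f \mapsto X_f$ to a continuous linear map on all of $\mc H_m$, still satisfying $\bb E|X_f| \le C\|f\|_n$; I keep the notation $X_f$.

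Now let $\{e_k\}$ be the orthonormal basis of $\mc H_m$ furnished by the singular value decomposition of the inclusion $\mc H_m \hookrightarrow \mc H_n$, so that $\sum_k \|e_k\|_n < \infty$. Then $\bb E \sum_k |X_{e_k}| \le C \sum_k \|e_k\|_n < \infty$, whence $\sum_k |X_{e_k}| < \infty$ and in particular $\sum_k X_{e_k}^2 < \infty$ almost surely. I would then define $X$ by $\<X, f\> := \sum_k X_{e_k}\,\<f, e_k\>_m$ for $f \in \mc H_m$; by Cauchy--Schwarz $|\<X, f\>| \le (\sum_k X_{e_k}^2)^{1/2}\,\|f\|_m$, so almost surely $X$ is a bounded functional on $\mc H_m$, i.e. an element of $\mc H_{-m} \subseteq \mc F'$, measurable as the $a.s.$ limit of the measurable partial sums $\sum_{k \le N} X_{e_k}\,\<\cdot, e_k\>_m$.

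It remains to verify $\<X, f\> = X_f$ almost surely. For $f = e_k$ this is the definition. For general $f \in \mc F \subseteq \mc H_m$, expand $f = \sum_k c_k e_k$ in $\mc H_m$ with $c_k = \<f, e_k\>_m$; the partial sums converge to $f$ in $\mc H_m$, so continuity of the extended map gives $X_{\sum_{k \le N} c_k e_k} \to X_f$ in $L^1(\bb P)$, hence $a.s.$ along a subsequence, while by linearity the left side equals $\sum_{k \le N} c_k X_{e_k} \to \<X, f\>$. This yields the identity with an $f$-dependent null set, as the statement allows. I expect the existence step to be the crux, and within it the reduction to a single summable scale: the whole construction rests on being able to choose the inclusion $\mc H_m \hookrightarrow \mc H_n$ nuclear, which is exactly what nuclearity of $\mc F$ supplies and without which the series defining $X$ need not converge in any $\mc H_{-m}$. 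The one point to fix at the outset is the intended meaning of continuity in the definition of a linear random functional, which I read as continuity into $L^1(\bb P)$; that is what makes the estimate $\bb E|X_f| \le C\|f\|_n$ available.
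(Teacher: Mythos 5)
The paper never proves Lemma \ref{l2.1}: Section \ref{s1} is explicitly imported from \cite{DFG} and \cite{DG}, so there is no in-text argument to compare yours against. What you have written is the standard regularization (kernel) theorem argument, and it is essentially correct. Uniqueness is fine: nuclear Fr\'echet spaces are separable, and on the common full-measure event where the two continuous functionals $X(\omega)$, $X'(\omega)$ agree on a countable dense set they agree everywhere. Existence follows your route correctly: convert continuity into $\bb E|X_f| \le C||f||_n$, extend to $\mc H_m$ by density of the common kernel, diagonalize the inclusion $\mc H_m \hookrightarrow \mc H_n$, and sum; the final verification with an $f$-dependent null set is exactly what the lemma asserts.

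Two hypotheses carry the argument, and you have flagged only one of them. The first is the meaning of ``continuous'' in the definition of a linear random functional; your reading (continuity into $L^1(\bb P)$, which with metrizability of $\mc F$ and the increasing norms yields $\bb E|X_f| \le C||f||_n$) is the natural one given that the $X_f$ are assumed integrable, and you say so. The second you pass over silently: the paper \emph{defines} nuclearity as the compactness property that every $||\cdot||_m$-bounded set is $||\cdot||_n$-compact, which only gives that the singular values $\sigma_k$ of $\mc H_m \hookrightarrow \mc H_n$ tend to zero, whereas your construction needs $\sum_k \sigma_k = \sum_k ||e_k||_n < +\infty$ (a trace-class inclusion) --- that is precisely what makes $\sum_k \bb E|X_{e_k}|$ finite and hence $\sum_k X_{e_k}^2 < +\infty$ a.s., and without it the series defining $X$ need not land in any $\mc H_{-m}$. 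The trace-class form is the standard meaning of ``nuclear'' and is what the cited references use, so invoking it is legitimate, but you should state explicitly that you are using this stronger (standard) form rather than the compactness property as literally written in the paper, since your existence proof cannot run on the latter alone.
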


\subsection{Weak formulation}

When the operators $\mc L$ and $S_t$ have the good taste of leaving the space $\mc F$ invariant, there is a very intuitive notion of solutions for (\ref{ec1}). This is the case, for example, when $\mc L =\Delta$. We start doing some formal manipulations. Here and below, the initial condition $\mc Y_0$ will be a random variable with values in $\mc F'$. Apply (\ref{ec1}) to a test function $\varphi$, multiply by another test function $f \in \mc C^\infty(-\delta,T)$, integrate the equation over time and perform an integration by parts to obtain
\[
 \int_0^T \<\mc Y_t, \varphi f'(t)+ f(t)\mc L \varphi\> dt
 	= \< \mc Y_0, \varphi f(0)\> + \int_0^T \<\mc Z_t, \varphi f'(t)\> dt.
\]

Since we want to capture the initial distribution, we will extend the space $\mc F_{0,T}$ a little bit. Notice that a function $f \in \mc F_{0,T}$ can be thought as a trajectory in $\mc F$ which vanishes at $t = 0, T$. Take any $\delta >0$ and define $\mc F_{-\delta,T}$ as the tensor product $\mc F \otimes \mc C^\infty(-\delta,T)$. We define $\mc F_T$ as the set of trajectories in $\mc F_{-\delta,T}$ restricted to the interval $[0,T]$. These trajectories always vanish at $t=T$, and of course $\mc F_{0,T} \subseteq \mc F_T$. By linearity and an approximation procedure, we can extend the previous identity to test functions $\psi \in \mc F_T$. For $\psi \in \mc F_T$, this identity reads
\begin{equation}
\label{ec2}
 \int_0^T \< \mc Y_t, \frac{\partial}{\partial t} \psi_t+\mc L \psi_t\> dt 
 	=\<\mc Y_0,\psi_0\> + \int_0^T \< \mc Z_t, \frac{\partial}{\partial t} \psi_t\> dt.
\end{equation}

\begin{definition}
\label{d1}
 We say that a process $\mc Y_\cdot$ in $\mc D([0,T],\mc F')$ is a solution of (\ref{ec1}) if $\mc L: \mc F \to \mc F$ is continuous and (\ref{ec2}) holds for any $\psi \in \mc F_T$.
\end{definition}

Let us recall now the variation of parameters method to solve linear evolution equations. The equation $d \mc{Y}_t = \mc  L^* \mc Y_t$ has as a solution the process $\mc Y_t = S_t^* \mc Y_0$, which means that $\mc Y_t$ is defined via the relation $\<\mc Y_t, \varphi\> = \< \mc Y_0, S_t \varphi\>$. Notice that $\mc Y_t$ is well defined in $\mc D([0,T],\mc F')$ if $S_t: \mc F \to \mc F$ is continuous. The variation of parameters method suggests to search for a solution to (\ref{ec2}) of the form $\mc Y_t = S_t^* \mc X_t$, where $\mc X_t$ is  a semimartingale satisfying $\mc X_0= \mc Y_0$. In that case, $\mc Y_t$ formally satisfies (\ref{ec2}) if $d\mc X_t = S_{-t}^* d \mc Z_t$. Since $S_{-t}^*$ is not well defined, the usual trick is to multiply this expression by $S_{t'}^*$ to obtain a well defined relation for $0 \leq t \leq t'$: $S_{t'}^* d\mc X_t = S_{t'-t}^* d\mc Z_t$. Integrating this expression, we obtain that $S_t^* \mc X_t = S_t^* \mc Y_0 +\int_0^t S_{t-s}^* d \mc Z_t$.

In terms of test functions, this expression gives
\[
 \<\mc Y_t, \varphi\> = \<\mc Y_0, S_t \varphi\> + \int_0^t \< d \mc Z_s, S_{t-s} \varphi\>.
\]
 
For a test function of the form $\varphi(x) f(t)$ with $\varphi \in \mc F$ and $f \in \mc C^\infty(0,T)$, the same manipulations yield
\[
 \int_0^T \<\mc Y_t, \varphi f(t)\> dt = \< \mc Y_0, \int_0^T f(t) S_t \varphi dt\> + \int_0^T dt \int_0^t \< d \mc Z_s, S_{t-s} \varphi\> f(t). 
\]

The last term can be written as
\begin{align*}
 \int_0^T \< d \mc Z_s, \int_s^T f(t) S_{t-s} \varphi dt\> 
 	&= - \int_0^T \< \mc Z_s, -\varphi f(s) \> - \int_s^T f(t) S_{t-s} \mc L \varphi dt \> ds\\
	& = -\int_0^T \<\mc Z_s, \int_s^T f'(t) S_{t-s} \varphi dt\> ds.   
\end{align*}

As before, using linearity and an approximation procedure we can extend this identity for arbitrary test functions $\psi \in \mc F_T$:
\begin{equation}
 \label{ec3}
 \int_0^T \<\mc Y_t, \psi_t\> dt = \< \mc Y_0, \int_0^T S_t \psi_t dt\> - \int_0^T \< \mc Z_s, \int_s^T S_{t-s} \frac{\partial}{\partial t} \psi_t dt\> ds.
\end{equation}

\begin{definition}
\label{d2}
 A process $\mc Y_\cdot$ in $\mc D([0,T],\mc F')$ is said to be an evolution solution of (\ref{ec1}) if $S_t: \mc F \to \mc F$ is continuous, for $\varphi \in \mc F$ the curve $t \mapsto \<\mc Y_t,\varphi\>$ is continuous in $\mc F$, and (\ref{ec3}) is satisfied for any $\psi \in \mc F_T$.
\end{definition}

Notice that by Lemma \ref{l2}, relation (\ref{ec3}) determines the distribution of $\mc Y_\cdot$.
Now we explain on which sense we have a unique solution of (\ref{ec1}):

\begin{proposition}
\label{p1}
 Let $\mc Y_0$ be a distribution in $\mc F'$. There exists a unique solution $\mc Y_\cdot$ of (\ref{ec1}) with initial condition $\mc Y_0$ if the conditions on $\mc L$ and $S_t$ stated in Definitions \ref{d1}, \ref{d2} are fulfilled. This solution is given by the evolution solution defined in Definition \ref{d2}.
\end{proposition}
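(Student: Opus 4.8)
The plan is to produce the solution explicitly by variation of parameters, to check that this explicit process is at once a solution in the sense of Definition \ref{d1} and the evolution solution of Definition \ref{d2}, and then to read off uniqueness from the fact that the evolution identity (\ref{ec3}) pins down the law. For existence I would define the candidate process through
\[
 \<\mc Y_t, \varphi\> = \<\mc Y_0, S_t \varphi\> + \int_0^t \<d\mc Z_s, S_{t-s}\varphi\>, \qquad \varphi \in \mc F .
\]
Because $S_t : \mc F \to \mc F$ is continuous by hypothesis and $\mc Z_\cdot$ is an $\mc F'$-valued semimartingale, for each fixed $t$ the right-hand side is linear and continuous in $\varphi$, hence a linear random functional; Lemma \ref{l2.1} then promotes it to a genuine random variable $\mc Y_t \in \mc F'$. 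The c\`adl\`ag regularity of $t \mapsto \mc Y_t$ and the continuity of $t \mapsto \<\mc Y_t,\varphi\>$ demanded by Definition \ref{d2} are inherited from the strong continuity of $S_\cdot$ on $\mc F$ and from the regularity of the stochastic integral. That this $\mc Y_\cdot$ satisfies (\ref{ec3}) is precisely the computation carried out just before Definition \ref{d2}: one first checks the identity for product test functions $\varphi \otimes f$, using the stochastic Fubini theorem to interchange the $ds$- and $dt$-integrals and an integration by parts in $s$ (with the boundary term killed by $f(T)=0$), and then extends to an arbitrary $\psi \in \mc F_T$ by linearity and density of finite sums of products.

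It remains to identify the weak and evolution formulations. I would prove the equivalence of (\ref{ec2}) and (\ref{ec3}) through the backward (adjoint) equation. Given an arbitrary $\Phi \in \mc F_T$, the trajectory
\[
 \psi_t = -\int_t^T S_{s-t}\Phi_s\, ds
\]
satisfies $\partial_t \psi_t + \mc L \psi_t = \Phi_t$ together with the terminal condition $\psi_T = 0$. Substituting this $\psi$ into the weak identity (\ref{ec2}) collapses its left-hand side to $\int_0^T \<\mc Y_t, \Phi_t\> dt$, while the very same Fubini-and-parts rearrangement used to produce (\ref{ec3}) --- now applied to the $\mc Y_0$- and $\mc Z$-terms and turning $S_{t-s}\mc L$ into $\partial_t$ acting on $\Phi$ --- reconstitutes the right-hand side of (\ref{ec3}) evaluated at $\Phi$. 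Running the substitution in the reverse direction yields the converse implication, so that (\ref{ec2}) and (\ref{ec3}) single out the same processes. Consequently the explicit process above, which satisfies (\ref{ec3}), is also a solution in the sense of Definition \ref{d1}; and conversely any Definition \ref{d1} solution satisfies (\ref{ec3}). Since the right-hand side of (\ref{ec3}) depends only on the data $\mc Y_0$ and $\mc Z_\cdot$, relation (\ref{ec3}) determines the element $\tilde{\mc Y} \in \mc F_{0,T}'$ associated to $\mc Y_\cdot$, and Lemma \ref{l2} then identifies the law of $\mc Y_\cdot$ uniquely; this gives both that the solution is the evolution solution and that it is unique.

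The hard part is not the formal algebra but the admissibility of the test function $\psi_t = -\int_t^T S_{s-t}\Phi_s\,ds$. A priori this trajectory is neither a finite sum of tensors $\varphi \otimes f$ nor manifestly an $\mc F$-valued, $t$-smooth curve, so it need not lie in $\mc F_T = \mc F \otimes \mc C^\infty(-\delta,T)$, which is exactly the class for which (\ref{ec2}) is assumed to hold. I expect the crux of the proof to be an approximation step realizing $\psi$ as a limit of elements of $\mc F_T$, with convergence quantified in each seminorm $||\cdot||_n$; here the continuity of $S_t$ and $\mc L$ on $\mc F$, together with the nuclear Fr\'echet structure of $\mc F$ (so that $||\cdot||_m$-bounded sets are $||\cdot||_n$-compact for suitable $m>n$), are what let one pass to the limit inside the pairings $\<\mc Y_t,\cdot\>$, $\<\mc Y_0,\cdot\>$ and $\<\mc Z_s,\cdot\>$. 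A secondary but pervasive technical point is the rigorous justification of the stochastic Fubini theorem and of the integration by parts for the $\mc F'$-valued semimartingale $\mc Z_\cdot$; the continuity of the map $x_\cdot \mapsto \tilde x$ established in the first lemma of this section is what makes these manipulations legitimate, and the a.s.\ continuity at $t=T$ required by Lemma \ref{l2} is supplied by the continuity built into the evolution solution.
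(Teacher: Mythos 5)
Your proposal is correct and follows essentially the same route as the paper, which disposes of the proposition in two lines --- uniqueness from (\ref{ec3}) together with Lemma \ref{l2}, existence by ``standard methods for evolution equations,'' with details deferred to \cite{DFG} and \cite{DG}. Your variation-of-parameters construction, the duality argument via the backward equation $\partial_t\psi_t+\mc L\psi_t=\Phi_t$, $\psi_T=0$, and your identification of the admissibility of $\psi_t=-\int_t^T S_{s-t}\Phi_s\,ds$ as the genuine technical point are precisely the standard filling-in of those deferred details.
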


Uniqueness follows from (\ref{ec3}) and Lemma \ref{l2}. Existence follows from standard methods for evolution equations. We call the process $\mc Y_\cdot$ the {\em generalized Ornstein-Uhlenbeck process} of characteristics $(\mc L,\mc Z_t)$ and initial condition $\mc Y_0$. The most well known example for which this Proposition applies is the operator $\mc L= \Delta$. In that case, it is well known that $\mc F =\mc S(\bb R^d)$ is left invariant by $\mc L$ and by $S_t$. However, an important example, relevant for our purposes, which falls out of this setting is the operator $\mc L = -(-\Delta)^{\alpha/2}$, $\alpha \in (0,2)$, i.e., when $\mc L$ is the {\em fractional Laplacian} in $\mc R^d$. The fractional Laplacian is an integral operator which can be written in the form
\[
 \mc L \varphi(x) = c_\alpha \int_{\bb R^d} \frac{dy}{|y|^{d+\alpha}} \big\{\varphi(x+y)-\varphi(x)\big\}
\]
for a properly chosen constant $c_\alpha>0$. Due to the long-range integration, for any positive function $\varphi \in \mc S(\bb R^d)$, the function $\mc L \mc \varphi \notin \mc S(\bb R^d)$: decay at infinity fails, as it is easily shown taking Fourier transforms. Therefore, for the case on which $\mc L = -(-\Delta)^{\alpha/2}$ a new interpretation of (\ref{ec1}) is needed. This is the content of the following section.

\subsection{The intermediate spaces}

Throughout this section, we take $\mc F = \mc S(\bb R^d)$, and therefore $\mc F' = \mc S'(\bb R^d)$. For $p>0$ and $\varphi \in \mc C(\bb R^d)$, the space of continuous functions in $\bb R^d$, define
\[
 ||\varphi||_{p,\infty} = \sup_{x \in \bb R^d} \big|\varphi(x)\big|(1+x^2)^p
\]
and $\mc C_p = \{\varphi \in \mc C(\bb R^d); ||\varphi||_{p,\infty} < +\infty\}$. Define also
\[
 \mc C_{p,0} = \{ \varphi \in \mc C(\bb R^d); \lim_{|x| \to \infty} \varphi(x)(1+x^2)^p =0\}.
\]

Clearly, $\mc C_p$, $\mc C_{p,0}$ are Banach spaces with respect to the norm $||\cdot||_{p,\infty}$. We also have the continuous embeddings $\mc F \hookrightarrow \mc C_{p,\infty}$ for $p >0$ and $\mc C_{p,0} \hookrightarrow \mc L^2(\bb R^d)$ for $p> d/2$. Let $\mc C_{p,\infty}'$ be the topological dual of $\mc C_{p,\infty}$. We have the chain of inclusions
\[
 \mc F \hookrightarrow \mc C_{p,0} \hookrightarrow \mc L^2(\bb R^d) \hookrightarrow \mc C_{p,0}' \hookrightarrow \mc F'.
\]

Denote by $||\cdot||_{-p,\infty}$ the dual norm in $\mc C_{p,\infty}'$. Let $\mc M_p^+$ be the set of positive, Radon measures $\mu$ in $\bb R^d$ such that $\int (1+x^2)^{-p} \mu(dx) <+\infty$. We give to $\mc M_p^+$ the $p$-vague topology, which is the weakest topology that makes the mappings $\varphi \mapsto \<\mu,\varphi\> =: \int \varphi d\mu$ continuous for every $\varphi \in \mc C(\bb R^d)$ of compact support and for $\varphi(x) = (1+x^2)^{-p}$ as well. Notice that for $\mu \in \mc M_p^+$, $||\mu||_{-p,\infty} = \int (1+x^2)^{-p} \mu(dx)$. Notice also that Lebesgue measure is in $\mc M_p^+$ for $p>d/2$. Define $\Delta_\alpha =-(-\Delta)^{\alpha/2}$ and let $S_t$ be the semigroup generated by $\Delta_\alpha$. The following propositions can be proved taking the representation of $\Delta_\alpha$ in Fourier space and the representation of $S_t$ in Fourier-Laplace space.

\begin{proposition}
 For $t \geq 0$ and $d/2 < p<(d+\alpha)/2$, $S_t$ is a bounded linear operator from $\mc C_p$ into itself, and also from $\mc M_p^+$ into itself.
\end{proposition}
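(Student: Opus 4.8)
The plan is to realize $S_t$ as convolution against the transition kernel of the symmetric $\alpha$-stable process and to reduce the entire statement to a single weighted bound. From the Fourier representation $\widehat{S_t\varphi}(\xi) = e^{-t|\xi|^\alpha}\widehat\varphi(\xi)$ one has $S_t\varphi = g_t * \varphi$, where $g_t$ is the probability density with $\widehat{g_t}(\xi) = e^{-t|\xi|^\alpha}$. I would extract the three facts I need: $g_t \geq 0$ with $\int_{\bb R^d} g_t = 1$; the scaling identity $g_t(x) = t^{-d/\alpha}g_1(t^{-1/\alpha}x)$ with $g_1$ bounded and continuous; and the sharp polynomial tail bound $g_t(x) \leq C_t (1+|x|)^{-(d+\alpha)}$. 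The last is the classical asymptotics of stable densities and is exactly what dictates the admissible range of $p$.

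Write $w(x) = (1+x^2)^{-p}$, so that $\|\varphi\|_{p,\infty}\le 1$ means $|\varphi|\le w$ pointwise, and $w \in \mc C_p$ with $\|w\|_{p,\infty}=1$. I claim everything follows from the single estimate
\begin{equation}
\label{keyest}
(S_t w)(x) \;=\;\int_{\bb R^d} g_t(x-y)(1+y^2)^{-p}\,dy\;\le\; C_t\,(1+x^2)^{-p}\qquad (x\in\bb R^d).
\end{equation}
Indeed, for the function statement, if $\varphi\in\mc C_p$ then $|\varphi|\le \|\varphi\|_{p,\infty}\,w$, and positivity of $g_t$ gives $|S_t\varphi(x)|\le \|\varphi\|_{p,\infty}(S_tw)(x)\le C_t\|\varphi\|_{p,\infty}(1+x^2)^{-p}$, i.e. $\|S_t\varphi\|_{p,\infty}\le C_t\|\varphi\|_{p,\infty}$; continuity of $S_t\varphi$ then follows from \eqref{keyest}, continuity of $g_t$ and dominated convergence. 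For the measure statement, $S_t$ is self-adjoint (the kernel is symmetric), so $S_t\mu = g_t*\mu$ and, all integrands being nonnegative, Tonelli's theorem gives
\[
\int_{\bb R^d}(1+x^2)^{-p}\,(S_t\mu)(dx)=\int_{\bb R^d}(S_t w)(y)\,\mu(dy)\le C_t\int_{\bb R^d}(1+y^2)^{-p}\,\mu(dy)=C_t\,\|\mu\|_{-p,\infty},
\]
which is finite for $\mu\in\mc M_p^+$; hence $S_t\mu\in\mc M_p^+$.

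It then remains to prove \eqref{keyest}. For $|x|$ bounded the bound is immediate from $\int g_t=1$ and boundedness of $w$, so I would fix $|x|$ large and split the integral over $A=\{|y|\le|x|/2\}$ and $B=\{|y|>|x|/2\}$. On $A$ one has $|x-y|\ge|x|/2$, so the tail bound gives $g_t(x-y)\le C_t(1+|x|)^{-(d+\alpha)}$, whence
\[
\int_A g_t(x-y)(1+y^2)^{-p}\,dy\le C_t(1+|x|)^{-(d+\alpha)}\int_{\bb R^d}(1+y^2)^{-p}\,dy\le C_t'(1+|x|)^{-(d+\alpha)},
\]
the last integral being finite because $p>d/2$; multiplying by $(1+x^2)^p\simeq(1+|x|)^{2p}$ produces a term of order $(1+|x|)^{2p-(d+\alpha)}$, bounded precisely because $p<(d+\alpha)/2$. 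On $B$ one has $1+y^2\ge c(1+x^2)$, so $(1+y^2)^{-p}\le C(1+x^2)^{-p}$ and
\[
(1+x^2)^p\int_B g_t(x-y)(1+y^2)^{-p}\,dy\le C\int_B g_t(x-y)\,dy\le C\int_{\bb R^d}g_t=C.
\]
Adding the two contributions yields \eqref{keyest}.

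The hard part, and the place where the hypotheses are used essentially, is the region $A$: it is the competition between the decay $(1+|y|)^{-2p}$ of the weight near the origin and the polynomial tail $|x|^{-(d+\alpha)}$ of the stable kernel that pins down $d/2<p<(d+\alpha)/2$. The lower bound $p>d/2$ makes the weight integrable (equivalently, puts Lebesgue measure in $\mc M_p^+$), while the upper bound $p<(d+\alpha)/2$ guarantees that the slowly decaying kernel does not destroy the prescribed decay of $S_tw$. The only genuinely analytic input is the sharp asymptotics $g_1(x)\sim c_\alpha|x|^{-(d+\alpha)}$, which must be read off from the known expansion of stable densities rather than deduced softly from $\widehat{g_1}=e^{-|\xi|^\alpha}$; I note in particular that a crude comparison such as Peetre's inequality is too lossy here, since the stable law has finite moments only of order strictly less than $\alpha$.
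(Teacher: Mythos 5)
Your proof is correct. The paper does not actually supply an argument for this proposition: by its own convention the label ``Proposition'' marks results quoted from elsewhere (here \cite{DFG}, \cite{DG}), and the only hint given is that it ``can be proved taking the representation of $\Delta_\alpha$ in Fourier space and the representation of $S_t$ in Fourier--Laplace space.'' Your route is the physical-space counterpart of that: you represent $S_t$ as convolution with the stable density $g_t$ and reduce everything to the single weighted bound $S_t w \leq C_t w$ for $w(x)=(1+x^2)^{-p}$, which then handles both the function space $\mc C_p$ (by positivity and domination) and the measure space $\mc M_p^+$ (by duality/Tonelli) in one stroke. The splitting over $\{|y|\leq |x|/2\}$ and its complement is sound, the only genuinely external input is the classical tail bound $g_1(x)\leq C(1+|x|)^{-(d+\alpha)}$, and your accounting correctly isolates where each half of the hypothesis $d/2<p<(d+\alpha)/2$ enters. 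Compared with the Fourier--Laplace sketch the paper alludes to, your argument has the advantage of making the admissible range of $p$ visibly a competition between the weight's integrability and the kernel's polynomial tail; the one thing worth adding explicitly is the verification that $S_t\mu$ is $p$-vaguely well defined as a positive Radon measure (local finiteness follows from your weighted bound), and, if you want the full strength of the statement as used later, the continuity of $t\mapsto S_t$ claims belong to the following proposition, not this one.
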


\begin{proposition}
 Let $\varphi \in \mc C_p$ be such that the limit $\lim_{|x| \to \infty} \varphi(x)(1+x^2)$ exists. Then $t \mapsto S_t \varphi$ is a continuous trajectory in $\mc C_p$. For any $\mu \in \mc M_p^+$, $t \mapsto S_t \mu$ is a $p$-vaguely continuous trajectory in $\mc M_p^+$. Moreover, there is a positive constant $C_T$ such that
 \[
  ||S_t \varphi||_{p,\infty} \leq C_T ||\varphi||_{p,\infty}
 \]
 for any $\varphi \in \mc C_p$ and any $0 \leq t \leq T$.
\end{proposition}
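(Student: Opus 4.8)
The plan is to use the explicit kernel representation $S_t\varphi = p_t*\varphi$, where $p_t$ is the transition density of the symmetric $\alpha$-stable process generated by $\Delta_\alpha$. I will invoke the classical facts that $p_t$ is nonnegative and even, $\int_{\bb R^d} p_t = 1$, it scales as $p_t(z) = t^{-d/\alpha}p_1(t^{-1/\alpha}z)$, and $p_1(z)\le C(1+|z|)^{-(d+\alpha)}$; together these give the bound $p_t(z)\le C\,t\,(t^{1/\alpha}+|z|)^{-(d+\alpha)}$, in particular $p_t(z)\le C\,t\,|z|^{-(d+\alpha)}$. Writing $w(x)=(1+x^2)^{-p}$, the definition of $||\cdot||_{p,\infty}$ reduces the norm inequality to the single estimate $S_tw(x)\le C_T\,w(x)$ for all $x$ and all $0\le t\le T$: indeed $|\varphi(y)|\le ||\varphi||_{p,\infty}\,w(y)$ and positivity of $p_t$ yield $|S_t\varphi(x)|\le ||\varphi||_{p,\infty}\,(S_tw)(x)$.

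The core of the argument is this pointwise bound on $S_tw$, and it is where the range $d/2<p<(d+\alpha)/2$, i.e.\ $d<2p<d+\alpha$, is used twice. I would split the convolution at $|y|=|x|/2$. On $\{|y|>|x|/2\}$ the weight is comparable, $w(y)\le 4^p w(x)$, so that part is at most $4^p w(x)\int p_t = 4^p w(x)$. On $\{|y|\le |x|/2\}$ one has $|x-y|\ge |x|/2$, whence $p_t(x-y)\le C\,t\,(|x|/2)^{-(d+\alpha)}$; since $2p>d$ the weight is integrable, $\int w = ||w||_1<\infty$, and this part is at most $C_T\,|x|^{-(d+\alpha)}\,||w||_1$. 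The inequality $2p<d+\alpha$ makes the ratio of this term to $w(x)\asymp |x|^{-2p}$ behave like $|x|^{2p-(d+\alpha)}$, which stays bounded (indeed vanishes) as $|x|\to\infty$; the region $|x|\le 1$ is trivial since there $w$ is bounded above and below. This gives $S_tw\le C_Tw$ and hence the asserted norm bound. I expect this weighted estimate to be the main obstacle: it is exactly the balance between the decay $2p$ of the weight and the tail $d+\alpha$ of the stable kernel, and without the restriction $p<(d+\alpha)/2$ the far contribution would dominate.

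For the continuity of $t\mapsto S_t\varphi$ in $\mc C_p$, I would first reduce to continuity at $t=0$: for $0\le s\le t\le T$ the semigroup property gives $S_t\varphi-S_s\varphi = S_s(S_{t-s}\varphi-\varphi)$ (and symmetrically on the left), so the uniform bound yields $||S_t\varphi-S_s\varphi||_{p,\infty}\le C_T\,||S_{|t-s|}\varphi-\varphi||_{p,\infty}$, and it suffices to prove $||S_t\varphi-\varphi||_{p,\infty}\to 0$ as $t\to 0^+$. The decay hypothesis on $\varphi$ ensures that $g:=(1+x^2)^p\varphi$ is a bounded continuous function admitting a limit at infinity, hence uniformly continuous with modulus $\omega_g$. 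Using $\int p_t=1$ I write
\[
 (1+x^2)^p\big(S_t\varphi-\varphi\big)(x) = \int p_t(x-y)\Big[\tfrac{w(y)}{w(x)}g(y)-g(x)\Big]\,dy
\]
and split at $|x-y|=\delta$. On $\{|x-y|\le\delta\}$ the elementary bound $|\nabla\log w|\le p$ gives $|w(y)/w(x)-1|\le e^{p\delta}-1$ uniformly, so this part is $\le C\big(\omega_g(\delta)+||g||_\infty(e^{p\delta}-1)\big)$, small with $\delta$ and uniform in $x$. On $\{|x-y|>\delta\}$ I bound the bracket by $||g||_\infty(w(y)/w(x)+1)$ and use $p_t(z)\le C\,t\,|z|^{-(d+\alpha)}$ together with the split according to whether $|y|\le |x|/2$, exactly as above, to see that this part is $\le C(\delta)\,t$, vanishing as $t\to 0$. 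Choosing $\delta$ small and then $t$ small gives the claim.

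Finally, the measure statement follows by duality. Since $p_t$ is even, $\<S_t\mu,\varphi\> = \<\mu,S_t\varphi\>$ for the relevant test functions, and the $p$-vague topology on $\mc M_p^+$ is generated by the maps $\mu\mapsto\<\mu,\varphi\>$ with $\varphi\in\mc C_c(\bb R^d)$ and $\varphi(x)=(1+x^2)^{-p}$. Each such $\varphi$ meets the hypothesis of the first part (the compactly supported ones with limit $0$, the weight $w$ with $g\equiv 1$), so $t\mapsto S_t\varphi$ is $||\cdot||_{p,\infty}$-continuous; then $|\<S_t\mu-S_s\mu,\varphi\>|\le ||S_t\varphi-S_s\varphi||_{p,\infty}\,||\mu||_{-p,\infty}\to 0$, which is precisely the $p$-vague continuity of $t\mapsto S_t\mu$ (membership in $\mc M_p^+$ being the content of the preceding proposition).
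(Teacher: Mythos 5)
Your argument is correct, and it is genuinely different from what the paper does: the paper gives no written proof at all, only the remark that these propositions ``can be proved taking the representation of $\Delta_\alpha$ in Fourier space and the representation of $S_t$ in Fourier--Laplace space'' (the section being a recall of Dawson--Fleischmann--Gorostiza and Dawson--Gorostiza). You instead work entirely in physical space with the heat kernel bound $p_t(z)\le C\,t\,(t^{1/\alpha}+|z|)^{-(d+\alpha)}$, which is the classical two-sided estimate for rotationally symmetric stable densities. The two routes buy different things: the Fourier--Laplace route makes the semigroup and resolvent identities transparent and is the natural language for the eigenfunction-expansion construction of the limiting Gaussian process, but it obscures exactly the point your proof isolates, namely that the admissible window $d<2p<d+\alpha$ is forced by a competition between the integrability of the weight ($2p>d$) and the tail of the jump kernel ($2p<d+\alpha$). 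Your reduction of the operator bound to the single inequality $S_t w\le C_T w$ for $w(x)=(1+x^2)^{-p}$, the split at $|y|=|x|/2$, and the logarithmic-derivative bound $|\nabla\log w|\le p$ used in the continuity step are all sound, and the duality argument for the measure statement correctly matches the definition of the $p$-vague topology. Two small remarks rather than objections: the paper's hypothesis ``$\lim_{|x|\to\infty}\varphi(x)(1+x^2)$ exists'' is almost certainly a typo for $(1+x^2)^p$, and you silently (and reasonably) prove the version with exponent $p$, which is also the one needed to cover the generating test functions $\varphi\in\mc C_c(\bb R^d)$ and $\varphi=w$ in the last step; and your appeal to the preceding proposition for $S_t\mu\in\mc M_p^+$ is legitimate since that statement is asserted independently there.
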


\begin{proposition}
 The mapping $\varphi \in \mc F \mapsto \Delta_\alpha \varphi \in \mc C_{p,0}$ is continuous. 
\end{proposition}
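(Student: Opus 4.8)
The plan is to prove the estimate directly from the singular-integral representation of $\Delta_\alpha$, extracting the precise rate of spatial decay of $\Delta_\alpha\varphi$ and reading off both membership in $\mc C_{p,0}$ and the continuity of the map from that rate. Throughout I take $d/2<p<(d+\alpha)/2$, the standing range of this section. First I would record the symmetric form of the operator, valid for every $\alpha\in(0,2)$ and every $\varphi\in\mc S(\bb R^d)$,
\[
 \Delta_\alpha\varphi(x)=\frac{c_\alpha}{2}\int_{\bb R^d}\frac{\varphi(x+y)+\varphi(x-y)-2\varphi(x)}{|y|^{d+\alpha}}\,dy,
\]
which agrees with the single-difference representation by oddness of the increment. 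This integral is absolutely convergent: near $y=0$ the numerator is $O(|y|^2)$ against $|y|^{-d-\alpha}$ with $\alpha<2$, and near infinity it is $O(1)$ against $|y|^{-d-\alpha}$ with $\alpha>0$; dominated convergence gives $\Delta_\alpha\varphi\in\mc C(\bb R^d)$. The only real issue is thus the weighted decay at infinity.

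Next I would estimate that decay. Fix $x$ with $|x|$ large and split $\bb R^d=\{|y|\le|x|/2\}\cup\{|y|>|x|/2\}$. On the near set, the whole segment $x\pm ty$ stays at distance $\ge|x|/2$ from the origin, so a second-order Taylor bound controls the numerator by $|y|^2\sup_{|\xi|\ge|x|/2}\|D^2\varphi(\xi)\|\le C_N|y|^2(1+|x|)^{-N}$ for every $N$, since $D^2\varphi$ is rapidly decreasing; integrating against $|y|^{2-d-\alpha}$ over $|y|\le|x|/2$ yields a rapidly decaying contribution $\le C_N'(1+|x|)^{-N+2-\alpha}$. On the far set I treat the three terms of the numerator separately. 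The term $-2\varphi(x)\int_{|y|>|x|/2}|y|^{-d-\alpha}\,dy=-C\,\varphi(x)|x|^{-\alpha}$ is rapidly decaying because $\varphi$ is. For $\int_{|y|>|x|/2}\varphi(x+y)|y|^{-d-\alpha}\,dy$ I substitute $z=x+y$ and split the $z$-region into $|z|\le|x|/4$ and $|z|>|x|/4$: on the first, $|z-x|\ge 3|x|/4$, so the kernel is $\le C|x|^{-d-\alpha}$ and the integral is $\le C|x|^{-d-\alpha}\|\varphi\|_{\mc L^1}$; on the second, $\varphi(z)$ is rapidly small and contributes a faster-decaying term. The $\varphi(x-y)$ term is identical by symmetry.

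Collecting these bounds gives $|\Delta_\alpha\varphi(x)|\le C_\varphi(1+|x|)^{-d-\alpha}$, where $C_\varphi$ is a finite linear combination of Schwartz seminorms of $\varphi$ (weighted sup-norms of $\varphi$ and $D^2\varphi$, together with $\|\varphi\|_{\mc L^1}$, itself dominated by such a seminorm). Consequently
\[
 (1+x^2)^p\,\big|\Delta_\alpha\varphi(x)\big|\le C_\varphi\,(1+|x|)^{\,2p-d-\alpha}\longrightarrow 0\qquad(|x|\to\infty),
\]
because $2p<d+\alpha$; this is exactly membership in $\mc C_{p,0}$ and uses the strict upper bound on $p$. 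The same estimate gives $\|\Delta_\alpha\varphi\|_{p,\infty}\le C_\varphi\le C\max_i\rho_i(\varphi)$ for finitely many seminorms $\rho_i$ of $\mc S(\bb R^d)$, which is precisely the continuity of the linear map $\varphi\mapsto\Delta_\alpha\varphi$ from $\mc F$ into the Banach space $\mc C_{p,0}$.

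The hard part is the far-field tail $\int\varphi(x+y)|y|^{-d-\alpha}\,dy$. It is the only contribution that decays merely polynomially, at the borderline rate $|x|^{-(d+\alpha)}$, because it feels the full mass of $\varphi$ near the origin through a kernel of size $|x|^{-d-\alpha}$, whereas every other piece is rapidly decaying; handling it requires the change of variables and dyadic split above, and it is exactly this term that pins the admissible range to $p<(d+\alpha)/2$. A Fourier-side argument is available through $\widehat{\Delta_\alpha\varphi}(\xi)=-|\xi|^\alpha\hat\varphi(\xi)$, as the section suggests, but there the same obstruction reappears as the non-smoothness of $|\xi|^\alpha$ at the origin (which caps the spatial decay at order $d+\alpha$), so the real-space estimate above is the most transparent route.
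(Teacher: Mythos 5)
Your proof is correct, and it takes a genuinely different route from the paper. The paper does not argue in physical space at all: it labels this statement a Proposition (its convention for results proved elsewhere) and simply remarks that it "can be proved taking the representation of $\Delta_\alpha$ in Fourier space," deferring the details to the references of Dawson--Fleischmann--Gorostiza and Dawson--Gorostiza. You instead work directly with the symmetrized singular-integral kernel, and your decomposition (Taylor bound on $|y|\le|x|/2$, then the three far-field pieces with the change of variables $z=x+y$ and the split $|z|\lessgtr|x|/4$) is carried out correctly; the dominant contribution $\|\varphi\|_{\mc L^1}\,|x|^{-d-\alpha}$ is indeed the sharp obstruction, and your observation that it is exactly what forces $p<(d+\alpha)/2$ is the right way to see why the upper bound on $p$ in this section is not an artifact. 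What your approach buys is an explicit, self-contained pointwise decay rate $|\Delta_\alpha\varphi(x)|\le C_\varphi(1+|x|)^{-d-\alpha}$ with $C_\varphi$ controlled by finitely many Schwartz seminorms, from which both membership in $\mc C_{p,0}$ and continuity of the linear map are immediate; the Fourier route is shorter to state but hides the decay rate inside the non-smoothness of $|\xi|^\alpha$ at the origin, as you note. Two very minor points: the global bound $\|\Delta_\alpha\varphi\|_{p,\infty}\le C_\varphi$ also needs the uniform bound for bounded $|x|$, which you do have from your absolute-convergence estimate in the first paragraph but should cite explicitly when "collecting the bounds"; and for $\alpha\in[1,2)$ the single-difference form of the kernel is only a principal value, so the symmetrized representation is not merely equivalent "by oddness" but is the definition that makes the integral absolutely convergent.
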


The idea now is to define a generalized version of what we mean by a solution of (\ref{ec1}). Notice that {\em a priori} expresions of the type $\<\mc Y_t,\Delta_\alpha \varphi\>$ are not well defined, since $\Delta_\alpha \varphi$ does not belong to $\mc F$. In the following two definitions, $\mc F$ denotes an arbitrary Fr\'echet space.

\begin{definition}
\label{d3}
 We say that a process $\mc Y_\cdot$ in $\mc D([0,T],\mc F')$ is a generalized solution of the (\ref{ec1}) if there exists a Banach space $\mc V$ of functions in $\bb R^d$ such that 
 \begin{itemize}
  \item[i)] $\mc F \hookrightarrow \mc V \hookrightarrow \mc L^2(\bb R^d)$
  \item[ii)] The linear mapping $\mc L : \mc F \hookrightarrow \mc V$ is continuous
  \item[iii)] $\int_0^T \< \mc Y_t, \mc L \psi_t\> dt$ is well defined for any $\psi_\cdot \in (D(\mc L) \cap \mc V) \otimes \mc C^\infty (0,T)$
  \item[iv)] For any $\psi_\cdot$ as in $iii)$, the identity (\ref{ec2}) holds $a.s.$
 \end{itemize}
\end{definition}

\begin{definition}
\label{d4}
 A process $\mc Y_\cdot$ in $\mc D([0,T],\mc F')$ is a generalized evolution solution of (\ref{ec1}) if there exists a Banach space $\mc V$ such that
 \begin{itemize}
  \item[i)] $\mc F \hookrightarrow \mc V \hookrightarrow \mc L^2(\bb R^d)$
  \item[ii)] $S_t: \mc F \to \mc V$ is continuous for any $t \in [0,T]$
  \item[iii)] $t \mapsto S_t \varphi$ is a continuous trajectory in $\mc V$ for any $\varphi \in \mc F$
  \item[iv)] The right-hand side of (\ref{ec3}) is well defined for any $\psi_\cdot \in \mc V \otimes \mc C^\infty(0,T)$
  \item[v)] For any $\psi_\cdot$ as in $iv)$, the identity (\ref{ec3}) holds $a.s.$
 \end{itemize}

\end{definition}

\begin{proposition}
\label{p2}
 Under the conditions of Definitions \ref{d3}, \ref{d4}, for any initial distribution $\mc Y_0$ in $\mc F'$ there exists a unique generalized solution of (\ref{ec1}), which is given by the generalized evolution solution defined in Definition \ref{d4}.
\end{proposition}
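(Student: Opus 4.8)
The plan is to reproduce the argument behind Proposition \ref{p1}, replacing everywhere the invariance of $\mc F$ under $\mc L$ and $S_t$ by the weaker requirement, built into Definitions \ref{d3} and \ref{d4}, that these operators send $\mc F$ into the intermediate Banach space $\mc V$. Concretely, existence will come from writing down the generalized evolution solution explicitly through the variation-of-parameters formula (\ref{ec3}), while uniqueness will follow by showing that \emph{every} generalized solution, in the sense of Definition \ref{d3}, satisfies the same evolution identity (\ref{ec3}), whose right-hand side determines the law of the trajectory via Lemma \ref{l2}.

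For existence, I would first define the candidate process pointwise by
\[
\<\mc Y_t,\varphi\> = \<\mc Y_0, S_t\varphi\> + \int_0^t \<d\mc Z_s, S_{t-s}\varphi\>, \qquad \varphi \in \mc F,
\]
and check it is well posed. Condition $ii)$ of Definition \ref{d4} gives $S_t\varphi \in \mc V$, condition $iii)$ gives continuity of $t \mapsto S_t\varphi$ in $\mc V$, and condition $iv)$ is precisely the hypothesis guaranteeing that the stochastic integral against the $\mc F'$-valued semimartingale $\mc Z$ is meaningful; together they make the right-hand side a well-defined, a.s. c\`adl\`ag real process for each fixed $\varphi$. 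Linearity and continuity in $\varphi$ then let me invoke Lemma \ref{l2.1} to realize $\mc Y_t$ as a genuine $\mc F'$-valued random variable, and a standard regularization shows $\mc Y_\cdot \in \mc D([0,T],\mc F')$. Integrating against a test trajectory $\psi_\cdot \in \mc V \otimes \mc C^\infty(0,T)$ and applying a stochastic Fubini theorem reproduces (\ref{ec3}), so the constructed process is a generalized evolution solution.

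Next I would verify that a generalized evolution solution is automatically a generalized solution, that is, that Definition \ref{d4} implies Definition \ref{d3}, by running in reverse the formal computation that led from (\ref{ec2}) to (\ref{ec3}) in the weak-formulation subsection. The only nontrivial input is the semigroup identity $\frac{d}{dt}S_t\varphi = S_t\mc L\varphi$, now read as an equality of $\mc V$-valued curves, which legitimizes the integration by parts in time that exchanges $f'(t)S_{t-s}\varphi$ for $f(t)S_{t-s}\mc L\varphi$; condition $ii)$ of Definition \ref{d3}, that $\mc L:\mc F \to \mc V$ is continuous, is what makes the resulting pairings $\<\mc Y_t,\mc L\psi_t\>$ well defined. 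Finally, for uniqueness, I would take an arbitrary generalized solution, apply the same manipulation to pass from (\ref{ec2}) to (\ref{ec3}), and conclude that the associated functional $\tilde{\mc Y} \in \mc F'_{0,T}$ is fixed by the data; Lemma \ref{l2} then upgrades this to uniqueness of the law of $\mc Y_\cdot$ in $\mc D([0,T],\mc F')$.

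The main obstacle, and the point where the argument genuinely differs from Proposition \ref{p1}, is to make the time-calculus of the semigroup rigorous in the intermediate space $\mc V$ rather than in $\mc F$: one must know that $t \mapsto S_t\varphi$ is not merely continuous but differentiable in $\mc V$ with derivative $S_t\mc L\varphi$, and that the various interchanges of the deterministic $dt$-integral, the stochastic $d\mc Z_s$-integral, and the action of $S_{t-s}$ are all justified. For the concrete operator $\mc L = \Delta_\alpha$ these facts are exactly what the preceding propositions on $S_t$ acting on $\mc C_p$ and $\mc M_p^+$ are designed to supply, so the role of this proof is essentially to package those analytic estimates into the abstract existence-and-uniqueness statement.
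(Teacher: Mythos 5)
The paper does not actually prove this statement: by the convention announced in Section \ref{s1}, ``Proposition'' marks results quoted from elsewhere (here \cite{DFG}, \cite{DG}), and the only indication given is the one-line sketch after Proposition \ref{p1} (``uniqueness follows from (\ref{ec3}) and Lemma \ref{l2}, existence from standard methods for evolution equations''). Your proposal is a correct elaboration of exactly that scheme, transplanted to the intermediate space $\mc V$ --- existence via the variation-of-parameters formula and Lemma \ref{l2.1}, uniqueness by deriving (\ref{ec3}) from (\ref{ec2}) and invoking Lemma \ref{l2} --- and you rightly isolate the one genuinely new analytic ingredient, namely the differentiability of $t \mapsto S_t\varphi$ in $\mc V$ with derivative $S_t \mc L \varphi$, which is what the preceding propositions on $S_t$ acting on $\mc C_p$ and $\mc M_p^+$ supply in the concrete case $\mc L = \Delta_\alpha$.
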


Notice that in particular, the stochastic equation $d \mc Y_t = \Delta_\alpha^* \mc Y_t dt + d\mc Z_t$ has a unique solution for any initial distribution $\mc Y_0$ in $\mc F'$.

\section{Generalized differential operators}
\label{s2}
\subsection{The derivative $d/dW$}

In Section \ref{s1} we have discussed existence and uniqueness of generalized Ornstein-Uhlenbeck processes defined in dual Fr\'echet spaces. Up to this point, the examples discussed so far (corresponding to the fractional Laplacians $\Delta_\alpha$) do not require to take an abstract Fr\'echet space, since the only space considered was the space $\mc S'(\bb R^d)$ of tempered distributions. In this section we give an example where ad-hoc nuclear spaces need to be constructed. 

Let $W: \bb R \to \bb R$ be a strictly increasing function. By convention, we assume that $W$ is c\`adl\`ag and that $W(0)=0$. For two given functions $f,g: \bb R \to \bb R$, we say that $g = df/dW$ if the identity
\[
 f(x) = f(0+) +\int_0^x g(y) W(dy)
\]
holds for any $x \in \bb R$, where the integral is understood as a Stieltjes integral, and $\int_0^x$ means integration over the interval $(0,x]$. In the same spirit, we say that $g = d/dx df/dW$ if
\[
 f(x) = f(0) + \frac{df}{dW}(0)\big\{W(x)-W(0)\big\} +\int_0^x W(dz) \int_0^z dy g(y)
\]
for any $x \in \bb R$. We will denote the operator $d/dx d/dW$ by $\mc L_W$. Notice that when the function $W$ is differentiable and $W'(x) \neq 0$ for any $x$ (that, $W$ is a diffeomorphism), $\mc L_W = \partial_x(W'(x)^{-1}\partial_x)$. We say that a function $f$ is $W$-differentiable if there exists a function $g: \bb R \to \bb R$ such that $g = df/dW$. When $W$ is a diffeomorphism, this notion reduces to usual differentiability.

Now we want to define $\mc L_W$ as an unbounded operator in $\mc L^2(\bb R)$. We say that a function $f \in \mc L^2(\bb R)$ belongs to $D(\mc L_W)$, the {\em domain} of $\mc L_W$, if there exists constants $a, b \in \bb R$ and a function $g \in \mc L^2(\bb R)$ such that
\begin{equation}
\label{ec4}
 f(x) = a+b\big(W(x)-W(0)\big) + \int_0^x W(dz) \int_0^z dy g(y)
\end{equation}
for almost every $x \in \bb R$. In that in this case, $f$ has a c\`adl\`ag version for which (\ref{ec4}) holds for any $x \in \bb R$. Notice also that the double integral is well defined for any $x$, since $g \in \mc L^1_{loc}(\bb R)$ and therefore the integrand of the second iterated integral is continuous. From now on, we assume that any $f \in D(\mc L_W)$ is c\`adl\`ag. If $f \in D(\mc L_W)$, we define $\mc L_W f =g$. Evidently, the operator $\mc L_W: D(\mc L_W) \to \mc L^2(\bb R)$ is a linear, unbounded operator. What it is not so evident, is that $\mc L_W$ is densely defined, symmetric and non-positive (remember that $\mc L_W$ is densely defined if $D(\mc L_W)$ is dense in $\mc L^2(\bb R^d)$). We state these properties as lemmas.

\begin{lemma}
 The operator $\mc L_W$ is densely defined.
 \label{l3}
\end{lemma}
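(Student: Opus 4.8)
The plan is to prove density of $D(\mc L_W)$ by showing that its orthogonal complement in $\mc L^2(\bb R)$ is trivial. First I would manufacture a rich supply of explicit elements of the domain. Fix $N>0$ and take any $g \in \mc C^\infty(-N,N)$; set $\tilde G(x) = b + \int_0^x g(y)\,dy$ and $f(x) = a + \int_0^x \tilde G(z)\, W(dz)$, so that by construction $df/dW = \tilde G$, $\mc L_W f = g$, and $f$ has the form (\ref{ec4}) with this $a,b,g$. Since $g$ is already in $\mc L^2(\bb R)$, the only thing to check for membership in $D(\mc L_W)$ is that $f \in \mc L^2(\bb R)$, and I would secure this by using the two free constants to force $f$ to be compactly supported in $[-N,N]$. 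Requiring $\tilde G$ to vanish outside $\supp g$ fixes $b$ and imposes the single condition $\int_{\bb R} g = 0$; requiring the two exterior values of $f$ to vanish then fixes $a$ and imposes the single condition $\int_{-N}^N \tilde G\, W(dz) = 0$. As $g$ ranges over this family, $\tilde G$ ranges exactly over $\{\psi \in \mc C^\infty(-N,N) : \int_{[-N,N]} \psi\, W(dz) = 0\}$.

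Next I would use orthogonality. Suppose $u \in \mc L^2(\bb R)$ satisfies $\<u,f\> = 0$ for every $f$ constructed above, and set $U(x) = \int_0^x u(y)\,dy$, which is continuous. Since $f$ is càdlàg of bounded variation with $df = \tilde G\, dW$, and $U$ is continuous (so $U$ and $f$ share no jumps), Stieltjes integration by parts together with $f(\pm N)=0$ gives
\[
 0 = \<u,f\> = \int_{\bb R} f\, dU = -\int_{[-N,N]} U(x)\,\tilde G(x)\, W(dx).
\]
Thus $U$ is $W$-orthogonal on $[-N,N]$ to every admissible $\tilde G$. Choosing $N$ so that $\pm N$ are not atoms of $W$ (only countably many values are excluded), $\mc C^\infty(-N,N)$ is dense in $\mc L^2([-N,N], W(dx))$, so the admissible $\tilde G$ are dense in the hyperplane $\{\mathbf 1_{[-N,N]}\}^\perp$. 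Hence $U$ lies in the $W$-span of $\mathbf 1_{[-N,N]}$, i.e. $U$ is constant $W(dx)$-almost everywhere on $[-N,N]$.

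Finally I would upgrade this to a pointwise statement. Because $W$ is strictly increasing, $W(dx)$ charges every nonempty open subinterval, so its topological support is all of $\bb R$; a continuous function that is $W(dx)$-a.e. constant on $[-N,N]$ is therefore genuinely constant there. Letting $N \to \infty$ along non-atoms shows that $U$ is constant on $\bb R$, whence $u = U' = 0$ Lebesgue-a.e. This gives $D(\mc L_W)^\perp = \{0\}$ and proves the lemma.

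I expect the main obstacle to be the rigorous Stieltjes calculus when $W$ has atoms and a singular part: one must justify the integration-by-parts identity for a general measure $W(dx)$ (the continuity of $U$ is exactly what keeps this clean), verify that the constructed $f$ genuinely satisfies (\ref{ec4}) pointwise and lies in $\mc L^2(\bb R)$, and confirm that the test family $\tilde G$ is dense in the correct weighted space, which is where the choice of $N$ avoiding atoms of $W$ enters. The step converting ``$W(dx)$-a.e. constant'' into ``genuinely constant'' is the one place where strict monotonicity of $W$ is indispensable.
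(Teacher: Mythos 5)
Your proof is correct, but it reaches density by a genuinely different route than the paper. The paper's argument is purely constructive: for each interval $[a,b]$ and each $\delta>0$ it builds a function $h \in D(\mc L_W)$ with $0 \leq h \leq 1$ that equals $\mathbf 1(x\in[a,b])$ except on a set of Lebesgue measure at most $2\delta$ (by integrating a normalized smooth bump against $W(dy)$ near each endpoint), so that $||h-\mathbf 1_{[a,b]}||_0^2 \leq 2\delta \to 0$; since step functions are dense in $\mc L^2(\bb R)$, this finishes the proof with a one-line estimate. You instead exhibit compactly supported domain elements parametrized by $W$-mean-zero smooth profiles $\tilde G$ and show that the orthogonal complement of their span is trivial. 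Both proofs rest on the same device --- manufacturing domain elements by integrating a smooth profile against $W(dy)$, with the free constants $a,b$ in (\ref{ec4}) used to kill the function outside a compact set --- but your endgame replaces the paper's explicit $\mc L^2$ estimate by a duality argument that requires three additional (correct but nontrivial) inputs: the Stieltjes integration-by-parts identity for the c\`adl\`ag BV function $f$ against the continuous $U$ (continuity of $U$ is indeed what makes the atoms of $W$ harmless), density of $\mc C^\infty(-N,N)$ in $\mc L^2([-N,N],W(dx))$ with $\pm N$ chosen off the atoms of $W$, and full topological support of $W(dx)$, which is where strict monotonicity enters. The paper's version is shorter and, as Remark \ref{r1} notes, iterates immediately to produce kernels for $(\mc L_W)^n$; yours has the mild advantage of exhibiting domain elements with prescribed smooth image $\mc L_W f = g$, which makes the structure of the range more transparent.
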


\begin{proof}
 Let $\zeta: \bb R\to [0,1]$ be a $\mc C^\infty$ function such that $\zeta(x) =0$ if $x \notin (0,1)$ and $\zeta(x)>0$ if $x \in (0,1)$. Fix $a<b$. Take $0< \delta < (b-a)/2$ and define $h: \bb R \to \bb R$ by
 \begin{itemize}
  \item[i)] for $x\leq a$, $h(x)=0$ 
  \item[ii)] for $a < x \leq (a+b)/2$,
  \[
   h(x) = c(a,\delta)^{-1} \int_a^x \zeta \big( ( y-a)/\delta \big) W(dy),
  \]
where $c(a,\delta)= \int_a^{a+\delta} \zeta \big( ( y-a)/\delta \big) W(dy)$
  \item[iii)] for $(a+b)/2 \leq x <b$,
  \[
   h(x) = h\big( (a+b)/2\big) - c(b,\delta)^{-1} \int_{\frac{a+b}{2}}^x \zeta \big((b-y)/\delta \big) W(dy),
  \]
where $c(b,\delta)= \int_{b-\delta}^b \zeta \big((b-y)/\delta \big) W(dy)$
  \item[iv)] $h(x)=0$ for $x \geq b$.
 \end{itemize}
 
 By construction, $h \in D(\mc L_W)$ and when $\delta \to 0$, $h$ converges in $\mc L^2(\bb R)$ to $\mathbf 1(x \in [a,b])$, the indicator function of the interval $[a,b]$. Considering the vectorial space generated by the functions $h$, we conclude that $D(\mc L_W)$ is dense in $\mc L^2(\bb R)$. 
\end{proof}

\begin{remark}
\label{r1}
Starting the construction with a function $\zeta$ in $D(\mc L_W)$ instead of a smooth function, we obtain a kernel for $(\mc L_W)^2$. A simple iteration of this construction allows us to prove that in fact the operator $(\mc L_W)^n$ is densely defined for any $n >0$. 
\end{remark}

\begin{lemma}
 The operator $\mc L_W$ is symmetric and non-positive.
\end{lemma}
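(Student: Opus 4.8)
The plan is to reduce both assertions to one Stieltjes integration-by-parts identity and then to control the boundary terms at $\pm\infty$. For $f \in D(\mc L_W)$ set $\phi = df/dW$; from (\ref{ec4}) one has $\phi(x) = b + \int_0^x g(y)\,dy$ with $g = \mc L_W f$, so $\phi$ is absolutely continuous, $\phi' = g$ in the ordinary sense, and $df = \phi\,dW$ as Stieltjes measures. Given a second $h \in D(\mc L_W)$ with $\psi = dh/dW$ and $k = \mc L_W h$, integration by parts on $[-N,N]$ — legitimate because $\phi$ is continuous and $d\phi = g\,dx$ has no atoms, even though $W$ (hence $f,h$) may jump — gives
\[
 \int_{-N}^{N} g(x)\,h(x)\,dx = \big[\phi(x)h(x)\big]_{-N}^{N} - \int_{(-N,N]} \phi(x)\psi(x)\,W(dx).
\]
Taking $h=f$ produces the diagonal identity with $\phi^2$ in the bulk term, which will yield non-positivity; the general identity has a bulk term symmetric under $f \leftrightarrow h$, which will yield symmetry, \emph{provided} the boundary terms vanish.

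The heart of the argument is therefore $\lim_{x\to\pm\infty}\phi(x)h(x)=0$. First, Cauchy--Schwarz applied to $\phi(x)=b+\int_0^x g$ gives the growth bound $|\phi(x)| \le |b| + \sqrt{|x|}\,\|g\|_{\mc L^2(\bb R)}$. I would begin with the diagonal case $h=f$: computing the Stieltjes differential $d(\phi f) = fg\,dx + \phi^2\,dW$, where $fg \in \mc L^1(\bb R)$, shows that $\phi(x)f(x) = \phi(0)f(0) + \int_0^x fg\,dy + \int_0^x \phi^2\,dW$, the last integral being monotone. Hence $\phi(x)f(x)$ has a limit in $(-\infty,+\infty]$ as $x\to+\infty$. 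If this limit were nonzero (or $+\infty$), then for large $x$ one would have $|f(x)| \ge c/|\phi(x)| \ge c'/\sqrt{x}$, forcing $\int^{\infty} f^2 = \infty$ and contradicting $f \in \mc L^2(\bb R)$; thus the limit is $0$, and the same computation with the decreasing integral $-\int_x^0 \phi^2\,dW$ handles $-\infty$. This simultaneously proves $\int_{\bb R}\phi^2\,dW < \infty$. Applying this to $f$ and to $h$ separately gives $\int \phi^2\,dW, \int \psi^2\,dW < \infty$, so Cauchy--Schwarz yields $\int_{\bb R}|\phi\psi|\,dW < \infty$; consequently the mixed product $\phi(x)h(x) = \phi(0)h(0)+\int_0^x hg\,dy + \int_0^x \phi\psi\,dW$ has finite limits at $\pm\infty$, which must again vanish, since a nonzero limit would force $|h(x)| \ge c'/\sqrt{|x|}$ against $h \in \mc L^2(\bb R)$.

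With the boundary terms eliminated, the diagonal identity becomes $\langle \mc L_W f, f\rangle = -\int_{\bb R}\phi^2\,dW \le 0$, giving non-positivity (and exhibiting the associated Dirichlet form), while the general identity becomes $\langle \mc L_W f, h\rangle = -\int_{\bb R}\phi\psi\,dW$, an expression manifestly invariant under interchange of $f$ and $h$; comparing it with the analogous computation of $\langle f, \mc L_W h\rangle = -\int_{\bb R}\psi\phi\,dW$ gives symmetry. The main obstacle is exactly the boundary-term analysis: there is no a priori decay or even boundedness of $\phi$, and the single quantitative input is the $\sqrt{|x|}$ growth estimate coming from square-integrability of $g$, which must be played against square-integrability of $f$ (resp. $h$) to kill the contributions at infinity; the presence of jumps in $W$ is by contrast harmless, since $\phi$ and $\psi$ are continuous.
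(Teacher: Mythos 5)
Your proof is correct, but it follows a genuinely different route from the paper's. The paper restricts first to $f,h\in D(\mc L_W)$ of \emph{compact support}, where the boundary terms in the Stieltjes integration by parts vanish trivially, and then appeals to ``an approximation argument'' to extend the identity $\int f\,\mc L_W h\,dx=-\int \tfrac{df}{dW}\tfrac{dh}{dW}\,dW$ to all of $D(\mc L_W)$. You instead work directly with arbitrary elements of the domain and carry out the boundary analysis at $\pm\infty$: the decomposition $d(\phi h)=hg\,dx+\phi\psi\,dW$ (valid because $\phi=df/dW$ is absolutely continuous, so the jumps of $W$ contribute no cross term), the monotonicity of $\int_0^x\phi^2\,dW$ forcing a limit of $\phi f$ at infinity, and the interplay between the $\sqrt{|x|}$ growth bound on $\phi$ and $f\in\mc L^2(\bb R)$ killing that limit. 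What your approach buys is twofold: it yields the finiteness of the Dirichlet form $\int(\frac{df}{dW})^2\,dW<\infty$ for every $f\in D(\mc L_W)$ as a by-product, and it avoids the paper's extension step, which as written is slightly delicate --- Lemma \ref{l3} gives $\mc L^2$-density of $D(\mc L_W)$ but not graph-norm density of the compactly supported elements, which is what the approximation really requires. The only cosmetic gap is the degenerate case $g=\mc L_W f=0$ in the growth estimate (then $\phi\equiv b$ and the conclusion is immediate from $f\in\mc L^2(\bb R)$); this does not affect the validity of the argument.
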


\begin{proof}
 Take $f,g \in D(\mc L_W)$, of compact support. By the previous lemma these functions are dense in $\mc L^2(\bb R)$. Then,
 \begin{align*}
  \int_{\bb R} f \mc L_W g dx 
  	& = \lim_{M \to \infty} \Big\{ f \frac{dg}{dW}\Big|_{x=-M}^{x=M} - \int_{-M}^M \frac{dg}{dW} df \Big\}\\
	& = - \int_{\bb R} \frac{df}{dW} \frac{dg}{dW} dW,
 \end{align*}
 where we have used the integration by parts formula for Stieltjes integrals and the fact that $df/dW$ and $dg/dW$ are well defined and have bounded support. By an approximation argument, we conclude the $\int f \mc L_W g dx = \int g \mc L_W f dx$ for any $f, g  \in D(\mc L_W)$, which proves that $\mc L_W$ is symmetric. Taking $f=g$, we see that $\int f \mc L_W f dx = - \int (df/dW)^2 dW$ and therefore $\mc L_W$ is non-positive.
\end{proof}

\subsection{The Sobolev spaces associated to $\mc L_W$}
In this section we construct a sequence of nested Hilbert spaces $\mc H_{n+1} \subseteq \mc H_n \subseteq \cdots \subseteq \mc L^2(\bb R)$ and we define $\mc F$ as the intersection of such spaces. We will prove that $\mc F$ is a nuclear Fr\'echet space on which $\mc L_W$ is continuous and therefore we will show that the theory of generalized Ornstein-Uhlenbeck process of Section \ref{s1} applies to $\mc F$ and $\mc L_W$. The idea is simple. We start recalling the spirit behind the definition of the usual Sobolev spaces in $\bb R$. In finite volume, the Laplacian operator with boundary conditions of Dirichlet type satisfies Poincar\'e inequality. From Poincar\'e inequality, we can prove that the Laplacian $\Delta$ has a compact resolvent. In particular, the sequence of Sobolev spaces associated to the norms $||\varphi||_n = \<\varphi, (-\Delta)^n \varphi\>^{1/2}$ is nuclear, and we also know that its intersection contains all the infinitely differentiable functions of compact support. However, in infinite volume the operator $\Delta$ does not have a compact resolvent, and another definition is needed. We already know that the Sobolev norms in infinite volume include a polynomial weight. One way to understand this polynomial weight is to look at the {\em Helmholtz operator} $-\Delta + x^2$. This operator has a compact resolvent, and its eigenvectors are the Hermite functions. The vectorial space finitely generated by the Hermite functions is the set of functions of the form $p(x) e^{-x^2}$, where $p(x)$ is a polynomial. A possible definition for the Schwartz space $\mc S(\bb R)$ is the following. Take $||\varphi||_n = \<\varphi, (-\Delta +x^2)^n \varphi\>^{1/2}$, $\mc F_0 = \{p(x)e^{-x^2}; p(\cdot) \text{is a polynomial }\}$ and define $\mc S(\bb R)$ as the closure of $\mc F_0$ under the topology generated by the norms $||\cdot||_n$. 

We will proceed in an analogous way for $\mc L_W$. Define $\mc A = -\mc L_W + |W(x)|$ (here we understand $|W(x)|$ as a multiplication operator). In the following series of lemmas, we will construct a nuclear space $\mc F$ on which $\mc L_W$ is continuous. We assume that $W$ is continuous at $x=0$. This assumption is not restrictive, since $\mc L_W$ remains unchanged by adding a constant to $W$ and $W$ has at most a numerable set of discontinuities. Therefore, a simple shift of the origin makes $W$ continuous at $x=0$. 

We start with a very simple observation.

\begin{lemma}
 The operators $\mc L_W$ and $|W(x)|$ have a common kernel $\mc D$.
\end{lemma}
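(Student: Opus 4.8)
The plan is to exhibit a single subspace $\mc D \subseteq \mc L^2(\bb R)$ that is dense and sits simultaneously inside $D(\mc L_W)$ and inside the domain of the multiplication operator $|W(x)|$, so that the sum $\mc A = -\mc L_W + |W(x)|$ is densely defined. The candidate is already at hand: I would take $\mc D$ to be the linear span of the compactly supported functions $h$ constructed in the proof of Lemma \ref{l3}. By construction each such $h$ belongs to $D(\mc L_W)$, and the span of the approximating family is dense in $\mc L^2(\bb R)$, exactly as established there. Thus the membership $\mc D \subseteq D(\mc L_W)$ and the density of $\mc D$ require no new work.

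The only genuinely new point to check is that these functions also lie in $D(|W(x)|)$, i.e. that $|W(x)|h(x) \in \mc L^2(\bb R)$. This is where I would invoke the regularity of $W$: being strictly increasing and c\`adl\`ag, $W$ is finite-valued and hence bounded on every compact interval. Each $h$ is supported in a fixed interval $[a,b]$, so by monotonicity $|W(x)| \le \max\{|W(a)|,|W(b)|\} =: C_{a,b} < \infty$ on that support, whence $\int_{\bb R} |W(x)|^2 h(x)^2\, dx \le C_{a,b}^2 \int_{\bb R} h(x)^2\, dx < \infty$. Therefore $h \in D(|W(x)|)$, and since taking the linear span preserves both memberships we obtain $\mc D \subseteq D(\mc L_W) \cap D(|W(x)|)$, which is precisely the assertion. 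If one later wants $\mc D$ to serve as a common kernel for the whole family of norms built from powers of $\mc A$, I would seed the construction with a function $\zeta \in D(\mc L_W)$ and iterate as in Remark \ref{r1} to land inside $\bigcap_n D((\mc L_W)^n)$ with compactly supported elements; the same local-boundedness argument then controls every power of $|W(x)|$ on their compact supports.

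The main obstacle, such as it is, is not density but the fact that multiplication by $|W(x)|$ does not preserve $W$-differentiability, so one cannot expect $\mc D$ to be invariant under $\mc A$ itself by elementary means. For the present statement this is irrelevant, since a \emph{common domain}, not invariance, is all that is needed; I would therefore rely entirely on compact support to decouple the two conditions, verifying each membership separately rather than attempting to track $\mc A\,\mc D$.
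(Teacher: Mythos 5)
Your proposal is correct and follows essentially the same route as the paper: take $\mc D$ to be the (span of the) compactly supported functions built in Lemma \ref{l3}, note they form a kernel for $\mc L_W$ by construction, and use local boundedness of the c\`adl\`ag increasing function $W$ on compact sets to see that they also lie in the domain of the multiplication operator $|W(x)|$. The paper states the second point in one line (compactly supported functions are a kernel for any multiplication operator with locally bounded symbol), which is exactly the estimate you wrote out.
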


\begin{proof}
 It is enough to take $\mc D$ as the set of functions constructed in Lemma \ref{l3}. In fact, by construction this set is a kernel for $\mc L_W$, and it is well known that the set of functions of compact support is a kernel for any multiplication operator on which the multiplication function is locally bounded.
\end{proof}
Notice that from this proposition we conclude that the operator $\mc A$ is densely defined, a fact which is not clear {\em a priori}. The same arguments also apply for the operator $\mc A^n$, $n \geq 1$. 
For $\varphi \in D(\mc A^n)$ define
\[
 ||\varphi||_n = \Big \{ \sum_{k=0}^n \<\varphi, \mc A^k \varphi\>\Big\}^{1/2}.
\]

Define now the {\em Sobolev space} $\mc H_n = \mc H_n(W)$ as the completion of $D(\mc A^n)$ under the norm $||\cdot||_n$. Notice that $\mc H_{n+1} \subseteq \mc H_n \subseteq \cdots \subseteq \mc L^2(\bb R)$ and each space $\mc H_n$ is a Hilbert space with inner product obtained from $||\cdot||_n$ by polarization. 

Before stating the next lemma, we need a simple definition. We say that a function $f: \bb R \to \bb R$ is $W$-H\"older continuous of exponent $\beta>0$ if there is a constant $c>0$ such that $|f(x)-f(y)| \leq c|W(x)-W(y)|^\beta$ for any $x, y \in \bb R$. 

\begin{lemma}
 Any function $\varphi \in D(\mc A)$ is $W$-H\"older continuous of exponent $1/2$. 
 \label{l4}
\end{lemma}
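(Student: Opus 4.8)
The plan is to exploit the fundamental-theorem representation built into the definition of $d\varphi/dW$, together with a Cauchy--Schwarz inequality taken with respect to the Stieltjes measure $W(dz)$. Since $\mc A = -\mc L_W + |W(x)|$, any $\varphi \in D(\mc A)$ lies in particular in $D(\mc L_W)$. Hence, by the very definition of the domain, there is a function $g = \mc L_W\varphi \in \mc L^2(\bb R)$ and a constant $b$ with
\[
 \frac{d\varphi}{dW}(z) = b + \int_0^z g(y)\, dy,
\]
and integrating once more against $W$ we get, for $y < x$,
\[
 \varphi(x) - \varphi(y) = \int_y^x \frac{d\varphi}{dW}(z)\, W(dz).
\]

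Next I would apply the Cauchy--Schwarz inequality to the last integral, regarding $W(dz)$ as a positive measure on $(y,x]$:
\[
 \big|\varphi(x)-\varphi(y)\big| \;\leq\; \Big(\int_y^x \big(\tfrac{d\varphi}{dW}\big)^2 W(dz)\Big)^{1/2}\Big(\int_y^x W(dz)\Big)^{1/2}.
\]
The second factor equals $\big(W(x)-W(y)\big)^{1/2}$, which already produces the desired exponent $1/2$; it remains only to bound the first factor by a constant independent of $x$ and $y$.

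For this I would enlarge the domain of integration to all of $\bb R$ and invoke the identity established in the preceding lemma on symmetry and non-positivity of $\mc L_W$, namely $\int_{\bb R}\big(d\varphi/dW\big)^2 dW = \<\varphi,-\mc L_W\varphi\>$. Because $|W(x)| \geq 0$, we have $\<\varphi,-\mc L_W\varphi\> \leq \<\varphi,\mc A\varphi\> \leq ||\varphi||_1^2$, so the first factor is at most $||\varphi||_1$. Taking $c = ||\varphi||_1 < \infty$ then yields $|\varphi(x)-\varphi(y)| \leq c\,|W(x)-W(y)|^{1/2}$ for all $x,y$, which is precisely $W$-H\"older continuity of exponent $1/2$; note that the constant is allowed to depend on $\varphi$, as it does here.

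The argument is essentially soft. The only point requiring a little care is the passage from the compactly supported functions, for which the identity $\int(d\varphi/dW)^2 dW = \<\varphi,-\mc L_W\varphi\>$ was derived, to a general $\varphi \in D(\mc L_W)$; this is supplied by the same density and approximation argument already used to prove that $\mc L_W$ is symmetric, so I do not anticipate any genuine obstacle.
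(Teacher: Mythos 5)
Your proof is correct and follows essentially the same route as the paper: write the increment as a Stieltjes integral of $d\varphi/dW$, apply Cauchy--Schwarz with respect to the measure $W(dz)$, and bound the resulting energy integral $\int (d\varphi/dW)^2\,dW$ by $\|\varphi\|_1^2$. The paper compresses this into one displayed chain of inequalities, but the content, including the implicit reliance on the integration-by-parts identity $\<\varphi,-\mc L_W\varphi\> = \int (d\varphi/dW)^2\,dW$ extended to $D(\mc A)$, is identical.
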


\begin{proof}
 Notice that 
 \[
  ||\varphi||_1^2 = \int \Big(\frac{d\varphi}{dW}\Big)^2 dW + \int |W(x)| \varphi(x)^2 dx.
 \]

 It is enough to see that 
 \begin{align*}
  \big|\varphi(y)-\varphi(x)\big|^2 
  	&= \Big(\int_x^y \frac{d\varphi}{dW} dW\Big)^2 
  	\leq \Big| W(y)-W(x)\Big|\int_x^y \Big(\frac{d\varphi}{dW}\Big)^2 dW\\
	&\leq ||\varphi||_1^2 \big|W(y)-W(x)\big|,
 \end{align*}
 which proves the lemma.
\end{proof}

\begin{lemma}
\label{l5}
 Let $\{\varphi_n\}_n$ be a sequence in $\mc L^2(\bb R)$ such that $||\varphi_n||_1 \leq 1$ for every $n$. Then, there are a subsequence $n'$ and a function $\varphi \in \mc L^2(\bb R)$ such that $||\varphi_{n'} -\varphi||_0 \to0$ as $n \to \infty$. In particular, the embedding $\mc H_1 \hookrightarrow \mc L^2(\bb R)$ is compact.
\end{lemma}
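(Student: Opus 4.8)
The plan is to verify the three hypotheses of the Fr\'echet--Kolmogorov (Kolmogorov--Riesz) compactness criterion in $\mc L^2(\bb R)$: a family that is (a) bounded in $\mc L^2$, (b) uniformly tight at infinity, and (c) uniformly equicontinuous in the $\mc L^2$ sense under translations, is precompact. Here the family is $\{\varphi_n\}$; by density of $D(\mc A)$ in $\mc H_1$ I may first take the $\varphi_n$ in $D(\mc A)$, the estimates passing to the closure by continuity in $||\cdot||_1$. The two inputs are the explicit shape of the norm,
\[
 ||\varphi||_1^2 = ||\varphi||_0^2 + \int \Big(\frac{d\varphi}{dW}\Big)^2 dW + \int |W(x)|\,\varphi(x)^2 dx,
\]
which in particular gives $\int |W(x)|\,\varphi(x)^2 dx \leq ||\varphi||_1^2 \leq 1$, and the $W$-H\"older estimate of Lemma \ref{l4}, namely $|\varphi_n(x)-\varphi_n(y)|^2 \leq ||\varphi_n||_1^2\,|W(x)-W(y)| \leq |W(x)-W(y)|$.

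Condition (a) is immediate since $||\varphi_n||_0 \leq ||\varphi_n||_1 \leq 1$. For tightness (b) I use that $W$ is unbounded, so $|W(x)| \to \infty$ as $|x| \to \infty$; given $\varepsilon>0$ choose $R$ with $\inf_{|x|\geq R} |W(x)| \geq 1/\varepsilon$, whence
\[
 \int_{|x|\geq R} \varphi_n(x)^2 dx \leq \varepsilon \int_{|x| \geq R} |W(x)|\,\varphi_n(x)^2 dx \leq \varepsilon
\]
uniformly in $n$. For the equicontinuity (c) I bound the translation increment by the $W$-H\"older estimate, but only on a compact window, since the full-line integral of $W(x+h)-W(x)$ diverges. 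On $[-R,R]$, using $|\varphi_n(x+h)-\varphi_n(x)|^2 \leq W(x+h)-W(x)$ (monotonicity of $W$, for $0<h\leq 1$) and then telescoping,
\[
 \int_{-R}^{R} |\varphi_n(x+h)-\varphi_n(x)|^2 dx \leq \int_{-R}^{R}\big(W(x+h)-W(x)\big)\, dx \leq h\big(W(R+1)-W(-R)\big),
\]
which tends to $0$ as $h \to 0$, uniformly in $n$. Combined with the tightness of (b), this yields uniform $\mc L^2$-equicontinuity of $\{\varphi_n\}$ on all of $\bb R$.

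With (a)--(c) in hand, Kolmogorov--Riesz produces an $\mc L^2$-convergent subsequence $\varphi_{n'} \to \varphi$, which is exactly the claim; since the unit ball of $\mc H_1$ is the set $\{\varphi : ||\varphi||_1 \leq 1\}$, this says that the embedding $\mc H_1 \hookrightarrow \mc L^2(\bb R)$ is compact. The main obstacle is the need to localize: the pointwise H\"older bound integrates to a finite quantity only over bounded sets, so condition (c) cannot be verified globally in one stroke and must be fed through the tightness of (b). Relatedly, because $W$ may have jumps, the functions $\varphi_n$ are only c\`adl\`ag and \emph{not} equicontinuous in the ordinary sense, so a uniform-norm Arzel\`a--Ascoli argument is unavailable; the integral (Kolmogorov--Riesz) formulation, insensitive to the countably many jumps, is the right vehicle.
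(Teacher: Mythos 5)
Your proof is correct, but it travels a genuinely different road from the paper's. The paper also starts from the $W$-H\"older estimate of Lemma \ref{l4}, but then changes the metric on the real line to $d_W(x,y)=|W(x)-W(y)|$, completes the resulting space by ``splitting'' each jump point of $W$ into two, applies the classical Arzel\`a--Ascoli theorem there to extract a subsequence converging uniformly on compacts, and finally upgrades to $\mc L^2$ convergence via Fatou and the same tail bound $\int|W|\varphi_n^2\,dx\leq 1$ that you use. You instead stay entirely inside $\mc L^2(\bb R)$ and verify the three hypotheses of Kolmogorov--Riesz: boundedness is trivial, tightness at infinity is the shared tail estimate, and the translation estimate follows from integrating the pointwise H\"older bound over a window and telescoping, then feeding the tails back through tightness --- all of which is sound (the reduction to $\varphi_n\in D(\mc A)$ by density is also legitimate). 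The trade-off: the paper's route produces the limit $\varphi$ as a locally uniform limit and thereby shows it is itself c\`adl\`ag and $W$-H\"older, information that is reused implicitly elsewhere, whereas your argument yields only an $\mc L^2$ limit --- which is all the lemma asserts --- while avoiding the somewhat delicate completion of $(\bb R,d_W)$ and any discussion of the jump set. One small correction to your closing remark: ordinary-sup-norm Arzel\`a--Ascoli is indeed unavailable in the Euclidean metric, but the paper does salvage it by working in the metric $d_W$, so the two approaches are genuinely parallel rather than one being forced.
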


\begin{proof}
 By Lemma \ref{l4}, the sequence $\{\varphi_n\}_n$ is $W$-H\"older continuous of exponent $1/2$. Consider in $\bb R$ the distance $d_W(x,y) = |W(y)-W(x)|$. The metric space $(\bb R, d_W)$ is not complete, but its completion can be obtained by ``splitting in two'' each point of discontinuity of $W$. More precisely, let $\{x_i;i \in \bb N\}$ be the set of discontinuities of $W$. Defining $\bb R_W = \bb R \cup \{x_i-\}_i$ and taking $d_W(x_i-,y) = |W(x_i-)-W(y)|$, we see that $(\bb R_W, d_W)$ is a complete metric space. Now we notice that the functions $\varphi_n$ are $1/2$-H\"older continuous (in the usual sense) in $\bb R_W$. In particular, we can continuously extend $\varphi_n$ to the whole space $\bb R_W$. With respect to the usual topology of $\bb R$, this just means that $\varphi_n$ is c\`adl\`ag, and that the set of discontinuities of $\varphi_n$ is contained in $\{x_i\}_i$. In $\bb R_W$, the sequence $\{\varphi_n\}_n$ is uniformly equicontinuous, and by Arzel\`a-Ascoli theorem, there are a subsequence $n'$ and a function $\varphi$ such that $\varphi_{n'} \to \varphi$, uniformly in compacts. The function $\varphi$ inherits the H\"older-continuity from $\varphi_{n'}$. Considering c\`adl\`ag versions of the functions $\varphi_n$, $\varphi$, we see that $\varphi_{n'} \to \varphi$, uniformly in compacts, also with respect to the {\em usual topology} of $\bb R$. In particular, $\varphi_{n'} \to \varphi$ pointwise. By Fatou's lemma, $\int |W(x)| \varphi(x)^2 dx \leq 1$. Since $\varphi$ is c\`adl\`ag, $\int_{-1}^1 \varphi^2 dx< +\infty$ and we conclude that $\varphi \in \mc L^2(\bb R)$. Since $\varphi_{n'} \to \varphi$ uniformly in compacts, in order to have convergence in $\mc L^2(\bb R)$ as well we only need to have a uniform estimate on the tails of the integrals $\int (\varphi_{n'} -\varphi)^2$. For any $M >0$,
 \begin{align*}
  \limsup_{n' \to \infty} \int (\varphi_{n'} -\varphi)^2 dx
  	& \leq \limsup_{n' \to \infty} \int_{-M}^M (\varphi_{n'} - \varphi)^2 dx \\
	& \quad + 2 \limsup_{n' \to \infty} \int\limits_{[-M,M]^c} (\varphi_{n'}^2+\varphi^2) dx\\
	& \leq \frac{2}{\min\{-W(-M),W(M)\}} \int |W(x)|(\varphi_{n'}^2+\varphi^2) dx\\
	& \leq \frac{4}{\min\{-W(-M),W(M)\}}.
 \end{align*}
 
 Since $M$ is arbitrary, we conclude that $||\varphi_{n'} -\varphi||_0 \to 0$ as $n' \to \infty$. 
\end{proof}

As a simple consequence of this lemma, we see that the operator $\mc A$ has a compact resolvent. The operator $\mc A$ is also symmetric and non-negative. In particular, there is an orthonormal basis $\{\varphi_i\}_i$ of $\mc L^2(\bb R)$ formed by eigenvectors of $\mc A$. Define $\mc F_0= \text{span} \{\varphi_i\}_i$, the vectorial space finitely generated by $\{\varphi_i\}$. It is clear that $\mc F_0 \subseteq D(\mc A^n)$ for any $n>0$. Since both operators $\mc L_W$ and $|W(x)|$ are local, we can take the projection of $\varphi_i$ into a finite box to conclude that $\mc L_W \varphi_i$ is well defined, but at this point we can not ensure that $\mc L_W \varphi_i$ is an element of $\mc L^2(\bb R)$. 

\begin{lemma}
\label{l6}
 For any function $\varphi \in \mc H_2$, $\mc L_W \varphi \in \mc L^2(\bb R)$. 
\end{lemma}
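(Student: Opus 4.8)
The plan is to establish an a priori bound $||\mc L_W \varphi||_0 \leq C ||\varphi||_2$ on a convenient dense class of well-behaved functions and then extend it to all of $\mc H_2$ by closedness of $\mc L_W$. First I would work with $\varphi$ ranging over a core $\mc C \subseteq D(\mc A^2)$ of \emph{compactly supported} functions, for instance the functions produced in Lemma \ref{l3}, iterated as in Remark \ref{r1}; these are compactly supported and lie in $D(\mc L_W^n)$, hence in $D(\mc A^n)$ because $|W|$ is bounded on compact sets, and they are $||\cdot||_2$-dense in $\mc H_2$. For such $\varphi$ every integral below is finite and the manipulations are rigorous. Writing $g = \mc L_W \varphi$ and $h = \mc A \varphi = -g + |W|\varphi$ and expanding $||h||_0^2$, I obtain
\[
 ||\mc L_W \varphi||_0^2 + \int W^2 \varphi^2 \, dx = ||\mc A \varphi||_0^2 + 2\int |W(x)| \, \varphi(x) \, \mc L_W \varphi(x) \, dx ,
\]
so the whole matter reduces to controlling the cross term $C := \int |W| \varphi \, \mc L_W \varphi \, dx$.

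Next I would compute $C$ by Stieltjes integration by parts. Setting $\psi = d\varphi/dW$, so that $g = \psi'$ is an ordinary derivative, compact support annihilates the boundary term, and the product rule $d(|W|\varphi) = |W| \, d\varphi + \varphi \, d|W|$ (legitimate since $W$ is continuous at $0$, so $|W|$ carries no jump there) together with $d\varphi = \psi \, dW$ and $d|W| = \mathrm{sgn}(x)\,dW$ gives
\[
 C = -\int |W| \, \psi^2 \, dW - \tfrac12 \int \mathrm{sgn}(x) \, d(\varphi^2) .
\]
Since $\varphi$ is continuous (being $W$-H\"older by Lemma \ref{l4}) and compactly supported, $\tfrac12 \int \mathrm{sgn}(x) \, d(\varphi^2) = -\varphi(0)^2$, whence $C = \varphi(0)^2 - \int |W| \psi^2 \, dW$. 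Substituting back yields the identity
\[
 ||\mc L_W \varphi||_0^2 + \int W^2 \varphi^2 \, dx + 2\int |W| \Big(\frac{d\varphi}{dW}\Big)^2 dW = ||\mc A \varphi||_0^2 + 2\varphi(0)^2 ,
\]
in which all three terms on the left are non-negative. In particular $||\mc L_W \varphi||_0^2 \leq ||\mc A \varphi||_0^2 + 2\varphi(0)^2 \leq ||\varphi||_2^2 + 2\varphi(0)^2$.

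It then remains to absorb the pointwise term $\varphi(0)^2$ into $||\cdot||_2$ and to pass to the completion. For the former I would use Lemma \ref{l4}: on the interval $I = \{x : |W(x)| \leq 1\}$, which contains a neighborhood of $0$ of some positive Lebesgue length $m$ (depending only on $W$) because $W$ is continuous at $0$ with $W(0)=0$, the $W$-H\"older bound gives $|\varphi(0)| \leq |\varphi(x)| + ||\varphi||_1$ for $x \in I$; squaring and integrating over $I$ yields $\varphi(0)^2 \leq \tfrac{2}{m} ||\varphi||_0^2 + 2||\varphi||_1^2 \leq C_W ||\varphi||_2^2$. Combining the two bounds gives $||\mc L_W \varphi||_0 \leq C ||\varphi||_2$ on the core $\mc C$. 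Finally, for an arbitrary $\varphi \in \mc H_2$ I would choose $\varphi_n \in \mc C$ with $||\varphi_n - \varphi||_2 \to 0$; applying the estimate to $\varphi_n - \varphi_m$ shows that $\{\mc L_W \varphi_n\}$ is Cauchy in $\mc L^2(\bb R)$, and since $\varphi_n \to \varphi$ in $\mc L^2(\bb R)$ while $\mc L_W \varphi_n$ converges, the closability of the symmetric operator $\mc L_W$ identifies the limit as $\mc L_W \varphi \in \mc L^2(\bb R)$.

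I expect the main obstacle to be the rigor of the Stieltjes computation of $C$ — in particular justifying the product rule and the vanishing of the boundary terms across $x=0$, where $|W|$ has a corner — and, equally, verifying that the chosen compactly supported functions genuinely form a $||\cdot||_2$-dense core in $\mc H_2$, so that the a priori identity can be propagated to the whole space.
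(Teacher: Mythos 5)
Your proposal is correct and follows essentially the same route as the paper's proof: expand $||\mc A \varphi||_0^2$ on a compactly supported core of $D(\mc A^2)$, evaluate the cross term $\int |W|\varphi\, \mc L_W\varphi\, dx$ by Stieltjes integration by parts to reach the same identity with boundary contribution $\varphi(0)^2$ (the paper's $\tfrac12(\varphi(0-)^2+\varphi(0)^2)$, which coincides with yours since $W$ is continuous at $0$), and then extend to all of $\mc H_2$ by density. The only divergence is your absorption of $\varphi(0)^2$ into $C_W||\varphi||_2^2$ by integrating the $W$-H\"older bound of Lemma \ref{l4} over a fixed interval where $|W|\leq 1$; this is a simpler, equally valid replacement for the paper's optimization via the auxiliary functions $\Gamma$ and $\Theta$.
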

 
\begin{proof}
 Let us compute $\<\mc A^2 \varphi, \varphi\>$ for $\varphi \in D(\mc A)$:
 \[
  \<\mc A^2 \varphi,\varphi\> = \<\mc A \varphi, \mc A \varphi\> = || \mc L_W \varphi ||^2 + || |W|^{1/2} \varphi ||^2 + 2\<-\mc L_W \varphi, |W| \varphi\>.
 \]
 
 Let us compute the crossed term $\<-\mc L_W \varphi, |W| \varphi\>$ with care.  By Remark \ref{r1}, there is a kernel for $\mc A^2$ composed only of functions with compact support. Take a function $\varphi \in D(\mc A^2)$ of compact support. In that case,
\begin{align*}
 \<- \mc L_W \varphi, |W| \varphi\> 
 	&= - \int |W| \varphi \frac{d}{dx} \frac{d\varphi}{dW} dx \\
	&= \int \frac{d \varphi}{dW} d\big\{|W|\varphi\big\}\\
	&= \int \frac{d \varphi}{d W} |W| d\varphi + \int \frac{d \varphi}{dW} \varphi d |W| \\
	&= \int |W| \Big\{\frac{d\varphi}{dW} \Big\}^2 dW + \int \frac{1}{2} \frac{d \varphi^2}{dW} d|W|\\
	&= \int |W| \Big\{\frac{d\varphi}{dW} \Big\}^2 dW - \frac{1}{2} \big(\varphi(0-)^2+ \varphi(0)^2\big),
\end{align*}
where we have performed an integration by parts in the second line and we have used Leibniz's rule $d(fg) = fdg+gdf$ in the third line. Therefore,
\[
 \<\mc A^2 \varphi, \varphi\> =||\mc L_W \varphi||_0^2 +|||W|^{1/2} \varphi||_0^2 + 2|||W|^{1/2} d\varphi/dW||_0^2 -\big(\varphi(0-)^2+\varphi(0)^2\big). 
\]

In particular, we have proved that 
\begin{equation}
 \label{ec6}
 \begin{split}
 ||\mc L_W \varphi||_0^2 
 	&\leq \<\mc A^2\varphi,\varphi\> + \varphi(0-)^2+\varphi(0)^2\\
 	&\leq ||\varphi||_2^2 +\varphi(0-)^2+\varphi(0)^2.
 \end{split}
\end{equation}

Therefore, our task is now to bound $\varphi(0)$ and $\varphi(0-)$ in terms of $||\varphi||_2$. Since $W$ is continuous at $x=0$, by Lemma \ref{l4} $\varphi(0-)=\varphi(0)$. If $\varphi(0)=0$, there is nothing to prove. Assume without loss of generality that $\varphi(0)>0$. Take a strictly increasing, continuous function $\Gamma: [0,\infty) \to [0,\infty)$ such that $\Gamma(0)=0$ and $|W(x)| \leq \Gamma(|x|)^2$ for any $x \in \bb R$. Since $|\varphi(x)-\varphi(0)| \leq ||\varphi||_1|W(x)|^{1/2}$, taking $\delta = \Gamma^{-1}(\varphi(0)/2||\varphi||_1)$ we have that $\varphi(x) \geq \varphi(0)/2$ for $x \in [-\delta,\delta]$. Therefore,
\[
 ||\varphi||_0 \geq \Big( \int_0^\delta \varphi(x)^2 dx\Big)^{1/2} \geq \frac{\varphi(0)}{2} \sqrt{\Gamma^{-1}\Big( \frac{\varphi(0)}{2||\varphi||_1}\Big)}.
\]

Define $\Theta(x) = x/2 \sqrt{\Gamma^{-1}(x/2)}$. Notice that $\Theta: [0,\infty) \to [0,\infty)$ is continuous and strictly increasing. Therefore, $\varphi(0) \leq ||\varphi||_1 \Theta^{-1}(||\varphi||_0/||\varphi||_1)$. Noticing that $||\varphi||_0 \leq ||\varphi||_1$, we have proved that $\varphi(0) \leq ||\varphi||_0 \Theta^{-1}(1)$. Putting this estimate into (\ref{ec6}), we conclude that there exists a constant $c=c(W)$ such that
\[
 || \mc L_W \varphi||_0 \leq c(W) ||\varphi||_2
\]
for any function of compact support. Taking suitable approximations, the lemma is proved for any function $\varphi \in \mc H_2$.
\end{proof}

Notice that the previous lemma is saying something more about $\mc L_W$. In fact, we can conclude that $\mc L_W$ is a continuous operator from $\mc H_2$ to $\mc L^2(\bb R)$. Define in $\mc F_0$ the metric
\[
 d(f,g) = \sum_{n \geq 0} 2^{-n} ||f-g||_n \wedge 1
\]
and let $\mc F$ be the closure of $\mc F_0$ under this metric. By definition, $\mc F \subseteq \mc H_n$ for any $n$ and by Lemma \ref{l5} the space $\mc F$ is a nuclear Fr\'echet space.

\begin{lemma}
 The operator $\mc L_W$ maps continuously $\mc F$ into itself.
\end{lemma}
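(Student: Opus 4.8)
The plan is to use the very definition of the topology of $\mc F$: since it is generated by the increasing family of norms $\{||\cdot||_n\}$, a linear map is continuous from $\mc F$ into $\mc F$ if and only if for every $n \geq 0$ there exist an index $m \geq n$ and a constant $C_n>0$ with $||\mc L_W \varphi||_n \leq C_n ||\varphi||_m$. It is enough to establish this on the dense core of compactly supported elements of $D(\mc A^m)$ furnished by Lemma \ref{l3} and Remark \ref{r1}, and then to extend it to all of $\mc F$ by density together with the estimate itself. Thus the lemma reduces to a family of a priori estimates, one for each $n$.

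First I would peel off the harmless part of the operator. Writing $\mc A = -\mc L_W + |W(x)|$, we have $\mc L_W = |W| - \mc A$. The operator $\mc A$ maps $\mc F$ continuously into itself: since $\mc A$ is symmetric and non-negative, for $\varphi \in \mc F_0$,
\[
 ||\mc A \varphi||_n^2 = \sum_{k=0}^n \<\mc A\varphi, \mc A^k \mc A\varphi\> = \sum_{j=2}^{n+2} \<\varphi, \mc A^j \varphi\> \leq ||\varphi||_{n+2}^2 .
\]
Consequently the whole difficulty is concentrated in the multiplication operator $|W|$; but since $|W| = \mc L_W + \mc A$, separating it off is circular, and it is cleaner to produce the estimate $||\mc L_W\varphi||_n \leq C_n ||\varphi||_m$ directly. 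The case $n=0$ is exactly Lemma \ref{l6}, with $m=2$.

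For the general case I would argue by induction on $n$, expanding
\[
 ||\mc L_W \varphi||_n^2 = \sum_{k=0}^n \<\mc L_W\varphi, \mc A^k \mc L_W\varphi\>
\]
and substituting $\mc L_W = |W| - \mc A$ in each summand. The terms in which $\mc A$ is paired against $\mc A$ collapse to $\<\varphi, \mc A^j\varphi\> \leq ||\varphi||_{n+2}^2$, so the content lies in the terms carrying the factor $|W|$. These I would treat exactly as in Lemma \ref{l6}: integrate by parts with respect to the Stieltjes measure $W(dx)$, using Leibniz's rule $d(fg)=f\,dg+g\,df$ and the identity $d|W|/dW = \mathrm{sgn}\,W$, so as to trade each surviving power of $\mc A$ for $W$-derivatives falling on $\varphi$. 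This produces bulk integrals, again bounded by $\<\varphi,\mc A^{j}\varphi\> \leq ||\varphi||_{n+2}^2$, together with boundary contributions localized at the origin involving the point values $\varphi(0)$, $\varphi(0-)$ and $d\varphi/dW(0)$. The latter are controlled by lower-order norms through the $W$-H\"older bound of Lemma \ref{l4} and the interpolation argument (the function $\Theta$) already used in the proof of Lemma \ref{l6}.

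The main obstacle is precisely the handling of the potential $|W|$, which does not commute with $\mc A$ and is not $W$-smooth: every integration by parts generates a fresh commutator contribution and a new family of origin boundary terms, and one must verify that these accumulate in a controlled fashion and remain dominated by a single norm $||\varphi||_m$ with $m=m(n)$ finite. Once this bookkeeping is carried out on the compactly supported core, a routine approximation — using that these functions are dense in each $\mc H_n$ and that the derived estimate is uniform — extends the bound to all of $\mc F$, showing that $\mc L_W:\mc F\to\mc F$ is continuous.
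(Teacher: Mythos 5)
Your proposal follows the same route as the paper, which disposes of the lemma in one line by observing that the computation of Lemma \ref{l6} iterates to give the continuity of $\mc L_W:\mc H_{n+2}\to\mc H_n$ for every $n\geq 0$; your reduction to the estimates $||\mc L_W\varphi||_n\leq C_n||\varphi||_{n+2}$ on the compactly supported core, proved by pairing $\mc A^k$ against $\mc L_W\varphi$ and integrating by parts against $W(dx)$ as in Lemma \ref{l6}, is exactly that argument spelled out. The bookkeeping of commutators with $|W|$ and of the boundary terms at the origin that you flag as the main obstacle is indeed the only content, and it is handled precisely by the $W$-H\"older bound of Lemma \ref{l4} and the interpolation via $\Theta$ that you cite, so the proof is correct and essentially identical to the paper's.
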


\begin{proof}
 Repeating the arguments of Lemma \ref{l6}, we see that $\mc L_W$ maps continuously $\mc H_{n+2}$ into $\mc H_n$ for any $n \geq 0$.
\end{proof}

Now we are finally in position to prove the main result of this section.

\begin{theorem}
 \label{t1}
 Let $\mc Z_t$ be a semimartingale in $\mc F'$, the topological dual of $\mc F$ with respect to $\mc L^2(\bb R)$, and let $\mc Y_0$ be a distribution in $\mc F'$. There exists a unique solution $\mc Y_t$ of the equation
 \[
  d \mc Y_t = \mc L_W \mc Y_t dt + d \mc Z_t
 \]
with initial condition $\mc Y_0$.
\end{theorem}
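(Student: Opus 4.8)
The plan is to verify that the operator $\mc L_W$ and its associated semigroup $S_t$ satisfy the hypotheses of Proposition \ref{p1}, so that the abstract existence-and-uniqueness result of Section \ref{s1} applies verbatim with the concrete Fr\'echet space $\mc F = \mc F_W$ constructed in this section. Since $\mc F$ is a nuclear Fr\'echet space densely included in $\mc L^2(\bb R)$, and since Lemma \ref{l6} together with the final lemma establishes that $\mc L_W$ maps $\mc F$ continuously into itself, condition of Definition \ref{d1} is already in hand. What remains is to produce the semigroup and check the continuity requirements of Definition \ref{d2}.

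First I would invoke the spectral theory made available by the compact embedding of Lemma \ref{l5}. Because $\mc A = -\mc L_W + |W(x)|$ has a compact resolvent and is symmetric and non-negative, $\mc L_W$ itself is a densely defined, symmetric, non-positive operator (established in the earlier lemmas), and hence by Friedrichs extension it is the generator of a strongly continuous contraction semigroup $S_t = e^{t\mc L_W}$ on $\mc L^2(\bb R)$. This furnishes the semigroup required by the abstract framework. The key point to verify is that $S_t$ leaves $\mc F$ invariant and acts continuously there. I would prove this at the level of each Sobolev space $\mc H_n$: since $\mc A$ and $S_t$ are built from the same operator $\mc L_W$, the operators $S_t$ and $\mc A$ commute on a common core (the span $\mc F_0$ of eigenvectors of $\mc A$), so $\mc A^k S_t = S_t \mc A^k$ on $\mc F_0$. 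Because $S_t$ is an $\mc L^2$-contraction, $\|S_t \varphi\|_n^2 = \sum_{k=0}^n \langle S_t\varphi, \mc A^k S_t \varphi\rangle = \sum_{k=0}^n \langle S_t \mc A^{k/2}\varphi, S_t \mc A^{k/2}\varphi\rangle \leq \sum_{k=0}^n \|\mc A^{k/2}\varphi\|_0^2 = \|\varphi\|_n^2$, so $S_t$ is a contraction in every $\|\cdot\|_n$ and therefore maps $\mc F$ continuously into itself. The same commutation, together with strong continuity of $S_t$ on $\mc L^2(\bb R)$ applied to each $\mc A^{k/2}\varphi$, yields that $t \mapsto S_t \varphi$ is continuous in each $\mc H_n$, hence continuous as a trajectory in $\mc F$; this is condition (iii) of Definition \ref{d2}.

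With $\mc L_W : \mc F \to \mc F$ continuous (final lemma) and $S_t : \mc F \to \mc F$ continuous with $t \mapsto S_t\varphi$ a continuous $\mc F$-trajectory, both Definition \ref{d1} and Definition \ref{d2} are satisfied. Proposition \ref{p1} then applies directly: for any semimartingale $\mc Z_t$ in $\mc F'$ and any initial distribution $\mc Y_0 \in \mc F'$, there exists a unique solution $\mc Y_t$ of $d\mc Y_t = \mc L_W \mc Y_t\, dt + d\mc Z_t$, given explicitly by the evolution solution of Definition \ref{d2}. Unlike the fractional-Laplacian case, no passage through the intermediate-space machinery of Definitions \ref{d3} and \ref{d4} is needed here, precisely because the bespoke space $\mc F = \mc F_W$ was engineered so that $\mc L_W$ preserves it.

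The main obstacle I anticipate is the justification that $S_t$ and $\mc A$ genuinely commute on a dense core and that the core $\mc F_0$ is preserved by $S_t$; this is what makes the clean contraction estimate $\|S_t\varphi\|_n \leq \|\varphi\|_n$ legitimate. The subtlety is that $S_t$ is generated by $\mc L_W$, not by $\mc A$, and the two operators, while both local and sharing the kernel $\mc D$, do not a priori have a simultaneous spectral decomposition. I would handle this by working with the functional calculus for $\mc A$: since $\mc A$ has discrete spectrum with eigenbasis $\{\varphi_i\}$, it suffices to control $\langle S_t \varphi, \mc A^k S_t \varphi\rangle$ through the resolvent identities relating $\mc L_W$ and $\mc A = -\mc L_W + |W|$, showing that the multiplication part $|W(x)|$ does not destroy the $\mc H_n$-bounds along the flow of $S_t$. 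An alternative, and perhaps cleaner, route is to bypass explicit commutation and instead verify the generalized-solution framework of Definition \ref{d4} with $\mc V = \mc H_n$ for suitable $n$, but I expect the direct semigroup-invariance argument to be the most transparent given that $\mc F$ is already tailored to $\mc L_W$.
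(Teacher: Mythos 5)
There is a genuine gap at the heart of your semigroup-invariance argument. The contraction estimate $\|S_t\varphi\|_n \leq \|\varphi\|_n$ rests on the identity $\mc A^k S_t = S_t \mc A^k$, and this is false in general: $S_t = e^{t\mc L_W}$ commutes with $\mc L_W$ but not with the multiplication operator $|W(x)|$, so $[S_t,\mc A] = [S_t,|W|] \neq 0$. Equivalently, $S_t$ does not preserve the eigenspaces of $\mc A$, so the core $\mc F_0 = \mathrm{span}\{\varphi_i\}$ is not a priori invariant under $S_t$, and the functional calculus for $\mc A$ gives no direct handle on $\langle S_t\varphi, \mc A^k S_t\varphi\rangle$. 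You flag this yourself as the main obstacle, but neither of your proposed remedies is carried out: the ``resolvent identities'' route would require commutator estimates between $e^{t\mc L_W}$ and $|W|$ that are nontrivial precisely because $W$ is an arbitrary increasing unbounded (and here purely discontinuous) function, and the retreat to Definition \ref{d4} with $\mc V = \mc H_n$ is only sketched. As written, the key hypothesis of Proposition \ref{p1} --- that $S_t$ maps $\mc F$ continuously into itself with continuous trajectories --- is not established.

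The paper sidesteps this entirely. Since the last lemma of Section \ref{s2} shows that $\mc L_W$ maps $\mc H_{n+2}$ continuously into $\mc H_n$ for every $n$, i.e.\ $\mc L_W$ is a continuous operator on the Fr\'echet space $\mc F$, the paper defines $S_t = e^{t\mc L_W}$ directly by its power series in $\mc F$, making no appeal to the $\mc L^2$ contraction semigroup or to any intertwining with $\mc A$. Continuity of $S_t$ on $\mc F$ and of the trajectories $t \mapsto S_t\varphi$ then follow from the boundedness of $\mc L_W$ on $\mc F$ together with the semigroup property, and Proposition \ref{p1} applies. If you wish to keep the Friedrichs-extension semigroup you would have to prove that it coincides with this power-series exponential on $\mc F$, or independently establish the $\mc H_n$-bounds along its flow; the shortest repair of your argument is to adopt the power-series definition of $S_t$ from the outset.
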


\begin{proof}
 We just need to check if the conditions of Proposition \ref{p1} are fulfilled by $\mc L_W$ in $\mc F$. Since the operator $\mc L_W$ is bounded in $\mc F$, the semigroup $S_t = e^{t \mc L_W}$ is well defined by taking the corresponding power series, for example. For a given $\varphi \in \mc F$, the trajectories $S_t \varphi$ are continuous in $\mc F$ by the semigroup property and the boundedness of $\mc L_W$, so Proposition \ref{p1} applies in this situation.
\end{proof}

\section{The zero-range process}
\label{s3}

Let $p = \{p(x,y); x,y \in \bb Z^d\}$ be the jump rate of a continuous-time random walk on $\bb Z^d$. Let $g: \bb N_0 \to [0,\infty)$ be such that $g(0)=0$ and $g(n) \leq n$ for any $n \in \bb N_0$. The zero-range process with jump rate $p$ and interaction rate $g(\cdot)$ is a system of random walks on $\bb Z^d$ with the following dynamics. For each ordered pair of sites $(x,y) \in \bb Z^d \times \bb Z^d$, after an exponential time of rate $g(n) p(x,y)$, where $n$ is the number of particles on site $x$, one particle jumps from site $x$ to site $y$. Notice that the condition $g(0)=0$ ensures that there is at least one particle at site $x$ when the jump occurs. After a jump, a new exponential time starts afresh. This happens independently for each pair of sites $(x,y)$. 

Since $p$ is the jump rate of a continuous-time random walk on $\bb Z^d$, we have $\sum_{y \in \bb Z^d} p(x,y) <+\infty$ for any $x \in \bb Z^d$. Without loss of generality, we assume that $p(x,x)=0$ for any $x \in \bb Z^d$. Since we are interested in {\em symmetric} systems, we also assume that $p(x,y) = p(y,x)$ for any $x,y \in \bb Z^d$. Notice that the dynamics described so far is {\em conservative}, that is, particles are not created neither annihilated. When the initial number of particles is finite, this dynamics corresponds to a continuous-time Markov chain. Let $\Omega = \bb N_0^{\bb Z^d}$ be the space of configurations of particles for this process. We denote the state of the process at time $t$ by $\eta_t$, and we denote by $\eta_t(x)$ the number of particles at site $x$ at time $t$. When the initial number of particles is infinite, some conditions are needed in order to prevent explosions on the system, that is, the arrival of an infinite number of particles to some site in finite time. For our purposes, the Lipschitz condition $\sup_n|g(n+1)-g(n)|<+\infty$ on the interaction rates and $\sup_x E[\eta(x)] <+\infty$ on the initial (maybe random) distribution of particles will be enough (see \cite{And}, \cite{Lig} for a more detailed discussion). 

The process $\eta_t$ described above is a Markov process generated by the operator $L$ given by
\[
 L f(\eta) = \sum_{x,y \in \bb Z^d} g\big(\eta(x)\big) \big[ f(\eta^{x,y})-f(\eta)\big],
\]
where $f: \Omega \to \bb R$ is a local function (that is, it depends on $\eta(x)$ for a finite number of sites $x \in \bb Z^d$), $\eta^{x,y}$ is given by
\[
 \eta^{x,y}(z) =
 \begin{cases}
  \eta(x)-1, & z=x\\
  \eta(y)+1, & z=y\\
  \eta(z), &z \neq x,y
 \end{cases}
\]
and $f$ satisfies a Lipschitz condition:
\[
 \sup_{x, y, \eta} \big|f(\eta^{x,y})-f(\eta)\big| <+\infty.
\]

\subsection{Invariant measures}

Under the symmetry condition  $p(x,y)=p(y,x)$ for any $x,y$, the process $\eta_t$ has a family of invariant measures of product form. For $\varphi \geq 0$, let us define
\[
 Z(\varphi) = \sum_{n \geq 0} \frac{\varphi^n}{g(n)!},
\]
where $g(n)!= g(1)\cdots g(n)$ for $n \geq 1$ and $g(0)!=1$. Define
\[
 \varphi_c = \liminf_{n \to \infty} \sqrt[n]{g(n)!}.
\]

The power series defining $Z(\varphi)$ converges for $\varphi<\varphi_c$ and diverges for $\varphi>\varphi_c$. We assume that $\varphi_c>0$. This is the case if, for example, $\liminf_n g(n)>0$. For $\varphi<\varphi_c$, let $\bar \nu_\varphi$ be the product measure on $\Omega$ defined by
\[
 \bar \nu_\varphi\big( \eta(x_1)=n_1,\dots,\eta(x_l) = n_l\big) = \prod_{i=1}^l \frac{1}{Z(\varphi)} \frac{\varphi^{n_i}}{g(n_i)!}.
\]

The measure $\bar \nu_\varphi$ satisfies the detailed balance condition, and therefore it is left invariant by the evolution of $\eta_t$ and it is also reversible under $\eta_t$. Let us define the density of particles $\rho(\varphi) = \int \eta(x) \bar \nu_\varphi$. A simple computation shows that $\rho(\varphi) = \varphi Z'(\varphi)/Z(\varphi)$. Since $\varphi \mapsto \rho(\varphi)$ is strictly increasing and smooth, $\rho_c = \lim_{\varphi \to \varphi_c} \rho(\varphi)$ exists and $\rho:[0,\varphi_c) \to [0,\rho_c)$ is a diffeomorphism.

Since the number of particles is conserved by the dynamics, it is more natural to parametrize the family of invariant measures $\{\bar \nu_\varphi; \varphi \in [0,\varphi_c)\}$ by the density of particles. Denote by $\varphi(\rho)$ the inverse of $\rho(\varphi)$. We define $\nu_\rho = \bar \nu_{\varphi(\rho)}$.

The family $\{\nu_\rho; \rho \in [0,\rho_c)\}$ of invariant measures is in addition ergodic if the transition rate $p$ is irreducible and $g(n)>0$ for $n>0$. From now on, we assume that in fact the measures $\{\nu_\rho\}$ are ergodic under the evolution of $\eta_t$.

\subsection{Density fluctuations}

Let us fix $\rho \in (0,\rho_c)$ and let us take $\nu_\rho$ as the initial distribution for $\eta_t$. Let $x(t)$ be the continuous-time random walk associated to $p(\cdot)$. Assume that $x(t)$ satisfies an invariance principle in the following sense: there are a scaling sequence $\{a(n); n \in \bb N\}$ and a Markov process $\mc X(t)$ such that
\[
 \lim_{n \to \infty} \frac{1}{n} x\big(ta(n)\big) = \mc X(t),
\]
in distribution with respect to the Skorohod topology on the space of c\`adl\`ag paths $\mc D([0,\infty),\bb R^d)$. Let us denote by $\mc L$ the generator of the process $\mc X(t)$. Assume that there are a nuclear Fr\'echet space $\mc F$ and a Banach space $\mc V$ for which the conditions in Proposition \ref{p2} are satisfied. Let $\mc C_c(\bb R^d)$ be the space of continuous functions in $\bb R^d$ of compact support. Since the measure $\nu_\rho$ is of product form, for any function $G \in \mc C_c(\bb R^d)$,  
\[
 \lim_{n \to \infty} \frac{1}{n^d} \sum_{x \in \bb Z^d} \eta_t(x) G(x/n) = \rho \int G(x) dx,
\]
in probability with respect to the distribution of $\eta_t$. It is natural, therefore, to define the {\em density fluctuation field} $\mc Y_t^n$ as the process in $\mc D([0,\infty), \mc F')$ given by 
\[
 \mc Y_t^n(G) = \frac{1}{n^{d/2}} \sum_{x \in \bb Z^d} \big(\eta_{t a(n)} (x) -\rho \big) G(x/n)
\]
for any $G \in \mc F$. Since the measure $\nu _\rho$ is of product form and $\eta(x)$ has a bounded second moment with respect to $\nu_\rho$, for each fixed time $t \geq 0$, the field $\mc Y_t^n$ converges to a Gaussian field of mean 0 and covariance matrix $\chi(\rho) \delta(y-x)$, where $\chi(\rho) = \text{Var}(\eta(x);\nu_\rho)$ and $\delta(x)$ is the Dirac mass at $x$. In other words, $\mc Y_t^n$ converges to a spatial white noise. The fluctuation-dissipation principle states (at least in the diffusive setting) that there exists a constant $D(\rho)$, called the {\em diffusivity}, such that $\mc Y_t^n$ converges in distribution to the solution $\mc Y_t$ of the Ornstein-Uhlenbeck equation 
\[
 d\mc Y_t = D(\rho) \mc L^* \mc Y_t dt + d \mc Z_t
\]
with initial condition $\mc Y_0$ equal to the spatial white noise described above, where $\mc Z_t$is an $\mc F'$-valued martingale of quadratic variation $\<\mc Z_t\>$ characterized by
\[
 \<\mc Z_t\>(G) = \<\mc Z_t(G)\> = 2 D(\rho) \chi(\rho) t \<G, -\mc L G\>.
\]

In the context of the zero-range process, $D(\rho) = \varphi'(\rho)$ and $\chi(\rho) = \varphi(\rho)/\varphi'(\rho)$. Our aim is to give a rigorous proof of this principle in two situations: for a superdiffusive system on which $a(n)$ grows faster than $n^2$ and for a subdiffusive system on which $a(n)$ grows slower than the diffusive scaling $n^2$.

\subsection{The superdiffusive case}

In order to simplify the exposition and to concentrate in the technical problems posed by anomalous diffusive behavior of the associated random walks, from now on we assume that $g(n) =1$ for any $n>0$. Our first choice for the jump rates is $p(x,y) = q(y-x)$, where $q: \bb R^d \setminus \{0\} \to (0,\infty)$ is a continuous function satisfying $q(\lambda x) = \lambda^{-(d+\alpha)} q(x)$ for any $x \in \bb R^d \setminus \{0\}$, any $\lambda \neq 0$ and some $\alpha \in (0,2)$. The condition $\alpha>0$ is assumed in order to have $\sum_y p(x,y) <+\infty$. In the other hand, the condition $\alpha <2$ is assumed in order to have a non-Brownian scaling limit for the associated random walk. This choice of jump rates corresponds to the {\em zero-range process with long jumps} introduced in \cite{Jar}. It is well known that the associated random walk $x(t)$ satisfies an invariance principle:
\[
 \lim_{n \to \infty} x(tn^\alpha) = \mc X(t),
\]
where $\mc X(t)$ is a L\'evy process generated by the operator $\mc L$ whose action over smooth functions $F$ of compact support is given by
\[
 \mc L F(x) = \int\limits_{\bb R^d} q(y) \big(F(x+y)+F(x-y)-2F(x)\big) dy.
\]

Since $q(\cdot)$ is continuous and positive in the unit sphere, we see that $\mc L$ is bounded with respect to the fractional Laplacian $-(-\Delta)^{\alpha/2}$. In particular, $\mc L$ also satisfies the conditions of Proposition \ref{p2}, and therefore there exists a unique generalized solution of the Ornstein-Uhlenbeck equation
\begin{equation}
\label{ec7}
 d\mc Y_t  \varphi'(\rho) \mc L^* \mc Y_t dt + d \mc Z_t
\end{equation}
with initial distribution $\mc Y_0$ equal to  a mean-zero spatial white noise of covariance matrix $\chi(\rho) \delta(y-x)$ and driving martingale $\mc Z_t$ satisfying
\[
 \<\mc Z_t(G)\> = 2t \varphi(\rho) \iint dx dy q(y)\big(G(y)-G(x)\big)^2.
\]

A simple integration by parts shows that this last integral is equal to the energy form $\mc E(G,G)=: - \int G(x) \mc L G(x) dx$. Denote by $\eta_t^n$ the zero-range process $\eta_{tn^\alpha}$ associated to this choice of jump rates and with initial distribution $\eta_0^n$ given by $\nu_\rho$. We adopt the notation $\bb P^n$ for the distribution of the process $\eta_t^n$ in $\mc D([0,T], \Omega)$ and $\bb E^n$ for the expectation with respect to $\bb P^n$. 

\begin{theorem}
\label{t2}
 The fluctuation field $\mc Y_t^n$ defined by
 \[
  \mc Y_t^n(G) = \frac{1}{n^{d/2}} \sum_{x \in \bb Z^d} \big(\eta_t^n(x) \rho\big) G(x/n)
 \]
 converges to the Ornstein-Uhlenbeck process solving (\ref{ec7}), in distribution with respect to the Skorohod topology in $\mc D([0,T], \mc F')$.
\end{theorem}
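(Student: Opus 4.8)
The plan is to run the standard three-step program for convergence of fluctuation fields — tightness, identification of limit points through a Boltzmann-Gibbs (fluctuation-dissipation) replacement, and uniqueness via Proposition \ref{p2} — adapted to the long-jump, superdiffusive time scale $a(n)=n^\alpha$. Throughout, the test space is $\mc F=\mc S(\bb R^d)$ and the limiting operator is the long-jump generator $\mc L$, which is bounded with respect to $-(-\Delta)^{\alpha/2}$.

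First I would reduce tightness of $\{\mc Y_\cdot^n\}$ in $\mc D([0,T],\mc F')$ to tightness of the real-valued coordinate processes $\{\mc Y_\cdot^n(G)\}$ for each fixed $G\in\mc F$, using Mitoma's criterion (available because $\mc F$ is nuclear). For fixed $G$, Dynkin's formula gives the decomposition
\[
 \mc Y_t^n(G) = \mc Y_0^n(G) + \int_0^t \frac{1}{n^{d/2}}\sum_{x} \big(g(\eta_s^n(x)) - \varphi(\rho)\big)\,\mc L_n G(x/n)\,ds + \mc M_t^n(G),
\]
where $\mc L_n G(x/n):= n^\alpha \sum_{z} q(z)\big(G((x+z)/n)-G(x/n)\big)$ converges uniformly to $\mc L G$ by the scaling $q(\lambda z)=\lambda^{-(d+\alpha)}q(z)$, the constant $\varphi(\rho)$ being insertable for free since $\sum_x \mc L_n G(x/n)=0$. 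Stationarity of $\nu_\rho$ together with its product structure gives uniform $\mc L^2$ bounds on the drift (of order $\|\mc L G\|_0^2$) and on the predictable quadratic variation $\<\mc M^n(G)\>_t$ (of order $t$), whence Aldous's criterion yields tightness of each coordinate; tightness at fixed times is immediate from the central limit theorem for $\nu_\rho$.

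The core of the argument is the identification of limit points, which rests on the fluctuation-dissipation (Boltzmann-Gibbs) principle. I would prove
\[
 \lim_{n\to\infty}\bb E^n\!\left[\Big(\int_0^t \frac{1}{n^{d/2}}\sum_x \big\{g(\eta_s^n(x)) - \varphi(\rho) - \varphi'(\rho)(\eta_s^n(x) - \rho)\big\}\,\mc L_n G(x/n)\,ds\Big)^2\right] = 0,
\]
replacing the nonlinear single-site function $g(\eta(x))$ by its linear projection onto the conserved density field. The function $V(\eta(x))=g(\eta(x))-\varphi(\rho)-\varphi'(\rho)(\eta(x)-\rho)$ has zero mean and zero first-order (density) component in $\mc L^2(\nu_\rho)$; the replacement is then controlled by a Kipnis--Varadhan-type $H_{-1}$ bound — the energy estimate advertised in the introduction — which dominates space-time fluctuations by the non-local Dirichlet form of the long-jump walk and uses the scaling $n^\alpha$. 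After the replacement the drift equals $\int_0^t \varphi'(\rho)\,\mc Y_s^n(\mc L_n G)\,ds$, which converges to $\int_0^t \varphi'(\rho)\,\mc Y_s(\mc L G)\,ds$; because $\mc L G\notin \mc S(\bb R^d)$, this passage must be read inside the intermediate-space framework of Definitions \ref{d3}, \ref{d4}, with $\mc V=\mc C_{p,0}$, the field $\mc Y_s$ being tested against the decaying continuous function $\mc L G$. Simultaneously I would show $\mc M_t^n(G)$ converges to a continuous Gaussian martingale: its jumps are $O(n^{-d/2})\to 0$ and its quadratic variation converges in probability to a deterministic multiple of $t\,\mc E(G,G)$ (a variance computation under the product measure $\nu_\rho$), so by the martingale central limit theorem the limit is precisely $\mc Z_t(G)$ with the quadratic variation stated before the theorem. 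The initial field $\mc Y_0^n$ converges to the spatial white noise by the ordinary central limit theorem.

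Any subsequential limit $\mc Y_\cdot$ therefore satisfies the weak formulation (\ref{ec2}) of the Ornstein-Uhlenbeck equation (\ref{ec7}), with initial condition the white noise and the stated driving martingale. Since $\mc L$ is bounded with respect to $-(-\Delta)^{\alpha/2}$, Proposition \ref{p2} provides a unique generalized solution, so all subsequential limits coincide and the full sequence converges in $\mc D([0,T],\mc F')$ to this solution. The main obstacle is the Boltzmann-Gibbs replacement: the classical spectral-gap and equivalence-of-ensembles arguments must be re-derived for the non-local long-jump dynamics under the non-diffusive time scale $n^\alpha$, and the energy estimate is the key input that makes this work for a general reversible system, independently of the diffusive exponent.
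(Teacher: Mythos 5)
Your proposal follows essentially the same route as the paper: Mitoma plus Aldous for tightness, the Dynkin decomposition, the Boltzmann--Gibbs replacement of $g(\eta(x))-\varphi(\rho)$ by $\varphi'(\rho)(\eta(x)-\rho)$, the intermediate space $\mc V=\mc C_{p,0}$ to make sense of $\mc Y_s(\mc L G)$ when $\mc L G\notin\mc S(\bb R^d)$, and uniqueness via Proposition \ref{p2}. The only cosmetic deviations are that the paper proves the Boltzmann--Gibbs principle by the classical Chang/KL block argument (relying on the ellipticity of $q$) rather than the energy estimate, and identifies the limiting martingale via weak $\mc L^2$ convergence and moment bounds rather than the martingale CLT; both substitutions are harmless.
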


\subsection{The subdiffusive case}
\label{s3.4}
In this section we define a process evolving in the one-dimensional lattice. A positive random variable $\zeta$ is said to be {\em $\alpha$-stable} ($\alpha \in (0,1)$), if it satisfies
\[
 \log E[e^{-\lambda \zeta}] = -c|\lambda|^\alpha
\]
for some constant $c>0$. Let $\{\xi_x; x \in \bb Z\}$ be a sequence of i.i.d. random variables with common distribution $\zeta$, where $\zeta$ is an $\alpha$-stable law. 

For a given realization of $\{\xi_x\}_x$, we define $p(x,x+1)=p(x+1,x)= \xi_x^{-1}$, and $p(x,y)=0$ if $|x-y| \neq 1$. It is well known that the sequence $\{\xi_x\}_x$ satisfies an invariance principle:
\[
 \lim_{n \to \infty} \frac{1}{n^{1/\alpha}} \sum_{i=1}^{[nx]} \xi_i = W(x)
\]
in distribution with respect to the Skorohod topology of $\mc D(\bb R, \bb R)$, where $W(x)$ is a double-sided, $\alpha$-stable subordinator with $W(0)=0$. Here we denote by $[nx]$ the integer part of $nx$. The following properties of the process $W(t)$ hold $a.s.$ The function $x \mapsto W(x)$ is c\`adl\`ag, increasing and of pure-jump type. We have $\lim_{n \to \pm \infty} W(x) = \pm \infty$. The function $W(x)$ is continuous at $0$, that is, $0$ is not a point of jump of $W(x)$.  
 
Consider the random walk $x(t)$ associated to the jump rate $p(\cdot,\cdot)$ defined above. The process $x(t)$ satisfies an invariance principle \cite{KK}, \cite{FJL}:

\begin{proposition}
 There exists a self-similar process $\mc X(t)$ of continuous paths, with self-similarity index $\alpha/(1+\alpha)$ such that 
 \[
  \lim_{n \to \infty} \frac{1}{n} x(t n^{1+1/\alpha}) = \mc X(t)
 \]
in distribution with respect to the Skorohod topology of $\mc D([0,\infty), \bb R)$. 
\end{proposition}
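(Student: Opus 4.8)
The plan is to recognize the random walk $x(t)$ as a \emph{generalized diffusion} (a quasi-diffusion in the sense of Stone) and to deduce its scaling limit directly from the scaling limit of its defining scale-and-speed data, which is precisely the environment invariance principle already recorded in Section~\ref{s3.4}. First I would identify the generator. Since the rates $p(x,x+1)=p(x+1,x)=\xi_x^{-1}$ are symmetric, the walk is reversible with respect to the counting measure on $\bb Z$, and a direct computation shows that its generator is the discrete version of $\frac{d}{dx}\frac{d}{dW_n}$, where $W_n$ is the discrete scale function with increments $W_n(x+1)-W_n(x)=\xi_x$ (the resistance of the bond $\<x,x+1\>$). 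That is, $W_n(x)=\sum_i \xi_i$ is exactly the partial-sum process of Section~\ref{s3.4}, and the walk is a symmetric random walk run in its natural scale $W_n$ with speed measure equal to counting measure. The assertion to prove is then that, under the joint rescaling of space by $n$ and time by $n^{1+1/\alpha}$, the pair (scale $W_n$, speed $=$ counting) converges to the pair (scale $W$, speed $=$ Lebesgue), whose associated quasi-diffusion is the process generated by $\mc L_W$.

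Second, I would invoke the classical functional limit theorem for partial sums in the domain of attraction of an $\alpha$-stable law, $n^{-1/\alpha}W_n([n\cdot]) \Rightarrow W(\cdot)$ in the Skorohod topology of $\mc D(\bb R,\bb R)$; this is the only place where the heavy tails of $\xi_x$ enter. By a Skorohod representation one may fix the environment on a common probability space so that this convergence holds almost surely, reducing the statement to a quenched convergence of quasi-diffusions. At the same time the rescaled speed measure $n^{-1}\sum_x \delta_{x/n}$ converges vaguely to Lebesgue measure, and the matching of the two rescalings (a factor $n^{1/\alpha}$ for the scale and a factor $n$ for the speed mass carried by an interval of macroscopic length) produces exactly the time scale $n^{1/\alpha}\cdot n = n^{1+1/\alpha}$ appearing in the statement.

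Third, I would apply Stone's continuity theorem for generalized diffusions, following the route of \cite{KK} and \cite{FJL}. Passing to the natural scale, the walk becomes a time-changed Brownian motion whose clock is the additive functional built from the speed measure, and Stone's theorem asserts that convergence of the rescaled scale functions together with vague convergence of the rescaled speed measures (to a limit of full support) forces convergence of the processes in the Skorohod topology. All hypotheses follow from the previous step, apart from a non-degeneracy condition guaranteeing that neither $+\infty$ nor $-\infty$ is reached in finite macroscopic time, which is exactly the almost-sure property $\lim_{x\to\pm\infty}W(x)=\pm\infty$ recorded in Section~\ref{s3.4}. Mapping back to the original scale identifies the limit as the quasi-diffusion generated by $\mc L_W$; because the barriers (the jumps of $W$) collapse to single points under the inverse map, this limit has continuous paths. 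Finally, the $\alpha$-stable scaling $W(\lambda\cdot)\stackrel{d}{=}\lambda^{1/\alpha}W(\cdot)$ combined with the homogeneity of $\mc L_W$ yields the self-similarity relation $\mc X(ct)\stackrel{d}{=}c^{\alpha/(1+\alpha)}\mc X(t)$, i.e. the index $\alpha/(1+\alpha)$.

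The main obstacle I anticipate is the interplay between the randomness of the environment and the convergence of the time change: one must control the contribution of the \emph{large jumps} of $W_n$ (the deep traps) to the additive functional, showing that the rare crossings of a macroscopic barrier cost a negligible amount of clock time while the accumulated local time at its edge survives in the limit. Technically this amounts to verifying that vague convergence of the speed measures, read through the discontinuous scale functions, is strong enough to give convergence of the inverse additive functionals uniformly on compact time intervals, together with control of the probability that the rescaled walk escapes to spatial infinity in a bounded time window; both are governed by $W(\pm\infty)=\pm\infty$ and by the tightness supplied by Stone's criterion.
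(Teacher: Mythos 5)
Your outline is correct, and it is essentially the argument the paper relies on: by its own convention the paper labels this result a ``Proposition'' because it is proved elsewhere, and it simply cites \cite{KK} and \cite{FJL}, whose proof is exactly the scale-and-speed (quasi-diffusion) route you describe --- identify the generator as a discrete $\tfrac{d}{dx}\tfrac{d}{dW_n}$, pass to the coupled environment $\xi^n$ so that the stable invariance principle for $W_n$ holds almost surely, apply Stone's continuity theorem to get the quenched limit $\mc X_W$, and average over $W$ to recover the annealed statement, with the self-similarity index $\alpha/(1+\alpha)$ coming from the $\alpha$-stable scaling of $W$. No gaps worth flagging beyond the technical points you already identify.
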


We point out that this is an {\em annealed} result: the convergence result is true only when averaged over the environment $\xi$. We will not give an explicit construction of the process $\mc X(t)$ here; for a detailed description of $\mc X(t)$, see \cite{KK} or \cite{FJL}. We just point out that $\mc X(t)$ is {\em not} Markovian, due to the averaging over the environment. Notice the subdiffusive scaling $1+1/\alpha>2$.

This proposition has been obtained by considering a different environment, distributed as $\xi$, but changing with the scaling. A simple computation shows that the distribution of $W(1)$ is equal to $\zeta$. A scaling argument shows that, in fact, any increment of the form $\epsilon^{-1/\alpha} \big(W(x+\epsilon)-W(x)\big)$ is distributed according to $\zeta$. Moreover, if the intervals $(x_0,x_1]$, $(y_0,y_1]$ are disjoints, the variables $W(x_1)-W(x_0)$, $W(y_1)-W(y_0)$ are independent. In view of these observations, for each $n$ we define 
\[
 \xi_x^n = n^{1/\alpha} \big\{ W\big((x+1)/n\big) - W\big(x/n\big)\big\}.
\]

Observe that for any fixed $n$, the sequence $\xi^n = \{\xi_x^n; x \in \bb Z\}$ has the same distribution of $\xi$, that is, it is a sequence of i.i.d. random variables with common distribution $\zeta$. In the other hand, by construction the normalized sums of the triangular array $\{\xi_x^n; x \in \bb Z, n \geq 1\}$ converge $a.s.$ to the process $W(\cdot)$. As above, we define the jump rate $p_n(\cdot,\cdot)$ by taking $p_n(x,x+1) = p_n(x+1,x) = (\xi_x^n)^{-1}$, and $p_n(x,y)=0$ if $|x-y| \neq 1$. Denote by $x_n(t)$ the random walk associated to $p_n(\cdot,\cdot)$. The sequence of random walks $\{x_n(t)\}_n$ satisfies the following invariance principle:

\begin{proposition}
 Fix a realization of $W(\cdot)$. Then,
 \[
  \lim_{n \to \infty} \frac{1}{n} x_n(tn^{1+1/\alpha}) = \mc X_W(t)
 \]
in distribution with respect to the Skorohod topology of $\mc D([0,\infty),\bb R)$, where $\mc X_W(t)$ is the Markov process generated by the operator $\mc L_W =  d/dx d/dW$. 
\end{proposition}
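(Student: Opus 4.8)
The plan is to recognise the rescaled walk $y_n(t) := \tfrac1n x_n(t n^{1+1/\alpha})$ as a sequence of \emph{generalized diffusions} (gap diffusions, in the sense of Feller, It\^o--McKean and Stone) on the lattice $\tfrac1n \bb Z$, and then to invoke the stability theory for such processes. The starting point is a direct computation of the generator $\mc A_n$ of $y_n$. Writing $f(x)=F(x/n)$ for $F:\bb R \to \bb R$ and using $(\xi_x^n)^{-1}=n^{-1/\alpha}\{W(\tfrac{x+1}n)-W(\tfrac xn)\}^{-1}$ together with $n^{1+1/\alpha}\cdot n^{-1/\alpha}=n$, one finds
\[
 \mc A_n F(x/n) = \frac{1}{1/n}\left[ \frac{F(\tfrac{x+1}n) - F(\tfrac xn)}{W(\tfrac{x+1}n) - W(\tfrac xn)} - \frac{F(\tfrac xn) - F(\tfrac{x-1}n)}{W(\tfrac xn) - W(\tfrac{x-1}n)}\right].
\]
This is exactly the discrete operator $\frac{d}{dm_n}\frac{d}{ds_n}$ attached to the speed measure $m_n=\tfrac1n\sum_{x\in\bb Z}\delta_{x/n}$ and the scale function $s_n=W|_{\frac1n\bb Z}$: each $y_n$ is the gap diffusion on $\tfrac1n\bb Z$ whose scale is $W$ sampled on the grid and whose speed measure is $1/n$ times counting measure. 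The feature that makes the statement quenched and essentially deterministic, once $W$ is fixed, is precisely that the environment $\xi^n$ has been built so that $s_n$ is literally $W$ restricted to a refining grid.

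With this identification the convergence reduces to the convergence of the defining data. First I would check $m_n\to dx$ vaguely: for $g\in\mc C_c(\bb R)$, $\int g\,dm_n=\tfrac1n\sum_x g(x/n)$ is a Riemann sum converging to $\int g(x)\,dx$. Second, $s_n\to W$, which is immediate since $s_n(x/n)=W(x/n)$ is $W$ evaluated on finer and finer grids and converges at every continuity point, the c\`adl\`ag structure being respected. The limiting data --- speed measure $dx$ and scale $W$ --- are exactly those of the gap diffusion generated by $\mc L_W=\frac{d}{dx}\frac{d}{dW}$, so the candidate limit is $\mc X_W$. The assumptions $W(0)=0$, $W$ continuous at $0$ and $\lim_{x\to\pm\infty}W(x)=\pm\infty$ guarantee that $\mc X_W$ is a well-defined, conservative Feller process with continuous paths and no accessible boundary, so the scheme is self-consistent. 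I would then invoke Stone's continuity theorem for generalized diffusions (in its deterministic, fixed-$W$ form), which promotes the convergence of $(m_n,s_n)$ to convergence of $y_n$ to $\mc X_W$ in distribution for the Skorohod topology on $\mc D([0,\infty),\bb R)$, from the common initial point $y_n(0)=0=\mc X_W(0)$.

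The main obstacle is the behaviour at the jumps of $W$. At a discontinuity $x_0$ the scale function has a gap equal to the jump, the conductance $(\xi_x^n)^{-1}$ of the bond straddling $x_0$ is of order (jump size)$^{-1}$ and hence tiny, and the walk is strongly reflected there; in the limit $\mc X_W$ pauses at $x_0$ while its natural-scale version $W(\mc X_W)$ traverses the forbidden gap, which is exactly what keeps the paths of $\mc X_W$ continuous. Making this quantitative --- controlling the time $y_n$ spends near each macroscopic jump and ruling out a spurious discontinuity in the limit --- is the delicate part; the natural language for it is the completed metric space $\bb R_W$ introduced in the proof of Lemma \ref{l5}, where distances are measured by $|W(\cdot)-W(\cdot)|$ and each jump point is split in two. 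Recasting both processes in natural scale through $s_n$ and $W$ exhibits them as time changes of Brownian motion and reduces the matter to the vague convergence of the pushed-forward speed measures together with the continuity of the inverse-scale (inverse-subordinator) maps; securing the uniform control these require near the atoms of $dW$ is what I expect to demand the most work. Tightness is then either a byproduct of Stone's theorem or, for a self-contained route, follows from Aldous' criterion using the Dirichlet-form bound on the increments of $y_n$.
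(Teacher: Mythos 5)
The paper itself contains no proof of this statement: by the convention announced at the start of Section \ref{s1}, the label ``Proposition'' is reserved for results proved elsewhere, and this quenched invariance principle is imported from \cite{KK} and \cite{FJL}. Your route is, however, exactly the one underlying those references. The identification of $n^{-1}x_n(tn^{1+1/\alpha})$ as the gap diffusion on $\tfrac{1}{n}\bb Z$ with scale function $W$ restricted to the grid and speed measure $\tfrac{1}{n}\sum_x \delta_{x/n}$ is correct --- your generator computation reproduces the operator $\mc L_n$ that reappears in Section \ref{s5} --- the vague convergence of the speed measures and the convergence of the scale functions are as elementary as you claim, and Stone's continuity theorem for generalized diffusions, applied after passing to natural scale, is precisely the device that upgrades convergence of the data $(s_n,m_n)$ to Skorohod convergence of the processes while absorbing the behaviour at the jumps of $W$ that you rightly isolate as the delicate point. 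Two remarks. First, your parenthetical assertion that $\mc X_W$ is a Feller process conflicts with the paper's own observation that $\mc X_W$ is Markov but not strongly Markov (a Feller process is automatically strongly Markov); nothing in your argument uses the Feller property, so this is a harmless misstatement, but it should be deleted. Second, the proposal stops where the real work begins: the uniform control of the time the walk spends at the bonds straddling macroscopic jumps of $W$, equivalently the verification of the hypotheses of Stone's theorem for speed measures with gaps in their support, is named but not carried out. Since that is exactly the content supplied by \cite{KK} and \cite{FJL}, what you have written is an accurate and correctly structured plan rather than a self-contained proof.
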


The main difference between this proposition and the previous one is that now the statement of this proposition is a {\em quenched} statement, since the random environment is first fixed, and then an invariance principle is claimed for that particular choice of the environment. We point out that the process $\mc X_W(t)$ is Markovian, but not strongly Markovian. For this and other properties of the process $\mc X_W(t)$, see \cite{FJL}.

Notice that almost any realization of $W(\cdot)$ satisfies the assumptions made in Section \ref{s2}. Therefore, Theorem \ref{t1} holds for the generator $\mc L_{W}$ and there is a unique solution of the Ornstein-Uhlenbeck equation 
\begin{equation}
 \label{ec4.5}
d \mc Y_{t}^W = \varphi'(\rho) \mc L_W \mc Y_{t}^W dt + \varphi(\rho)^{1/2} d \mc Z_{t}^W  
\end{equation}
with initial distribution $\mc Y_{0}$ equal to a mean-zero spatial white noise of covariance matrix $\chi(\rho) \delta(y-x)$ and driven by the martingale satisfying
\[
\< \mc Z_t^W \> = 2t \int \Big( \frac{dG}{dW}\Big)^2 dW.
\]

Let $\eta_t^{n,W}$ be the rescaled zero-range process $\eta_{tn^{1+1/\alpha}}$ associated to the transition rate $p_n(\cdot,\cdot)$. We call this process the {\em zero-range process with environment $W$}.

\begin{theorem}
\label{t3}
Fix a realization of $W$ and a density $\rho \in (0,\rho_c)$. Consider the zero-range process $\eta_t^{n,W}$ with initial distribution $\nu_\rho$. The fluctuation field $\mc Y_\cdot^{n,W}$ defined by
\[
\mc Y_t^{n,W}(G) = \frac{1}{\sqrt n} \sum_{x \in \bb Z} \big( \eta_t^{n,W}(x)-\rho\big)
\]
converges to the Ornstein-Uhlenbeck process $\mc Y_t^W$ defined above, in distribution with respect to the Skorohod topology in $\mc D([0,\infty),\mc F')$. 
\end{theorem}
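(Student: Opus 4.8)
The plan is to follow the Holley--Stroock scheme adapted to the nuclear space $\mc F = \mc F_W$ built in Section \ref{s2}: a Dynkin martingale decomposition, a Boltzmann--Gibbs (fluctuation--dissipation) replacement, and identification of the limit through the uniqueness provided by Theorem \ref{t1}. Fix $G \in \mc F$ and let $L_n$ be the (unscaled) generator of the zero-range process with rates $p_n$, so that $\eta_t^{n,W}$ has generator $n^{1+1/\alpha}L_n$. Dynkin's formula gives
\begin{equation*}
\mc Y_t^{n,W}(G) = \mc Y_0^{n,W}(G) + \int_0^t n^{1+1/\alpha}L_n\mc Y_s^{n,W}(G)\,ds + M_t^n(G),
\end{equation*}
with $M_t^n(G)$ a mean-zero martingale. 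Writing $\nabla_n^W G(x) = \{G(\tfrac{x+1}{n})-G(\tfrac{x}{n})\}/\{W(\tfrac{x+1}{n})-W(\tfrac{x}{n})\}$ and using $n^{1+1/\alpha}(\xi_x^n)^{-1} = n/\{W(\tfrac{x+1}{n})-W(\tfrac{x}{n})\}$, a summation by parts yields
\begin{equation*}
n^{1+1/\alpha}L_n\mc Y_s^{n,W}(G) = \frac{1}{\sqrt n}\sum_{x\in\bb Z} g\big(\eta_s^{n,W}(x)\big)\,(\mathbb L_n G)(x),
\end{equation*}
where $(\mathbb L_n G)(x)=n[\nabla_n^W G(x)-\nabla_n^W G(x-1)]$ is the natural discretization of $\mc L_W$ built from the increments of $W$. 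Splitting $g(\eta) = \varphi(\rho) + \{g(\eta)-\varphi(\rho)\}$, the constant term equals $\tfrac{\varphi(\rho)}{\sqrt n}\sum_x(\mathbb L_n G)(x)$, which vanishes because $\mathbb L_n G$ is a discrete total difference and $G$ together with its $W$-gradient decays at infinity; hence only the centered part survives.

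The core of the proof is the \emph{Boltzmann--Gibbs principle}: in $L^2(\bb P^n)$, and after integrating in time, $g(\eta_s^{n,W}(x))-\varphi(\rho)$ may be replaced by $\varphi'(\rho)\{\eta_s^{n,W}(x)-\rho\}$, where $\varphi'(\rho) = \mathrm{Cov}_{\nu_\rho}(g(\eta(x)),\eta(x))/\chi(\rho)$ is the $\mc L^2(\nu_\rho)$-projection coefficient of $g(\eta(x))$ onto the conserved field. Precisely, I would show
\begin{equation*}
\lim_{n\to\infty}\bb E^n\Big[\Big(\int_0^t\frac{1}{\sqrt n}\sum_x\big\{g(\eta_s^{n,W}(x))-\varphi(\rho)-\varphi'(\rho)(\eta_s^{n,W}(x)-\rho)\big\}(\mathbb L_n G)(x)\,ds\Big)^2\Big]=0.
\end{equation*}
By the Kipnis--Varadhan inequality the left-hand side is bounded by a time factor times an $H_{-1}$ norm of the summand, which the \emph{energy estimate} controls by the Dirichlet form $D_n(f)=\tfrac{n^{1+1/\alpha}}{2}\sum_x(\xi_x^n)^{-1}E_{\nu_\rho}[(f(\eta^{x,x+1})-f(\eta))^2]$. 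Since $g(\eta(x))-\varphi(\rho)-\varphi'(\rho)(\eta(x)-\rho)$ depends only on $\eta(x)$, has zero mean, and is orthogonal to $\eta(x)-\rho$, a local (one-block) spectral-gap bound makes its contribution negligible once divided by the scale $n^{1+1/\alpha}$. Granting this, and using $G\in\mc F\subseteq\mc H_2$ together with Lemma \ref{l6} to pass $\mathbb L_n G\to\mc L_W G$ in $\mc L^2(\bb R)$, the drift reduces to $\int_0^t\varphi'(\rho)\,\mc Y_s^{n,W}(\mathbb L_n G)\,ds$ and converges to the drift $\int_0^t\varphi'(\rho)\,\mc Y_s^W(\mc L_W G)\,ds$ of \eqref{ec4.5}.

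For the martingale, the scale factors combine as $n^{1+1/\alpha}(\xi_x^n)^{-1}n^{-1}=1/\{W(\tfrac{x+1}{n})-W(\tfrac{x}{n})\}$, so the quadratic variation is
\begin{equation*}
\langle M^n(G)\rangle_t = \int_0^t\sum_x\big\{g(\eta_s^{n,W}(x))+g(\eta_s^{n,W}(x+1))\big\}\frac{\{G(\tfrac{x+1}{n})-G(\tfrac{x}{n})\}^2}{W(\tfrac{x+1}{n})-W(\tfrac{x}{n})}\,ds.
\end{equation*}
Replacing $g$ by its equilibrium mean $\varphi(\rho)$ through ergodicity and recognizing a Riemann--Stieltjes sum, this converges to $2\varphi(\rho)\,t\int(dG/dW)^2\,dW$, the quadratic variation of $\varphi(\rho)^{1/2}\mc Z_t^W$ in \eqref{ec4.5}; moreover the maximal jump of $M^n(G)$ is of order $n^{-1/2}$ and tends to $0$, so the limiting martingale is continuous and Gaussian. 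Finally $\mc Y_0^{n,W}$ converges to the mean-zero white noise of variance $\chi(\rho)$ by the classical central limit theorem for the product measure $\nu_\rho$.

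Tightness of $\mc Y_\cdot^{n,W}$ in $\mc D([0,\infty),\mc F')$ follows, by Mitoma's criterion and the nuclearity of $\mc F$, from tightness of each real-valued $\mc Y_\cdot^{n,W}(G)$, which I obtain from the decomposition above via Aldous' criterion, the drift being controlled by the energy estimate and the martingale by its quadratic variation. Any limit point then solves the martingale problem attached to \eqref{ec4.5} with white-noise initial data; since $\varphi'(\rho)\mc L_W$ acts continuously on $\mc F$ and the noise is Gaussian with deterministic covariance, Theorem \ref{t1} shows this martingale problem has the unique solution $\mc Y_t^W$, which identifies the limit and proves the theorem. I expect the Boltzmann--Gibbs step to be the main obstacle: the rates $(\xi_x^n)^{-1}$ inherit the heavy tails of the $\alpha$-stable environment, so the Dirichlet form is strongly degenerate and the energy estimate must be made uniform over a fixed realization of $W$ whose increments are bounded neither above nor below; the c\`adl\`ag, pure-jump nature of $W$ and the fact that the test functions live in the ad hoc space $\mc F$ rather than in $\mc S(\bb R)$ further complicate both the localization arguments and the passage $\mathbb L_n G\to\mc L_W G$.
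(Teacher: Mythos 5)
Your overall scheme (Dynkin decomposition, Boltzmann--Gibbs replacement, martingale identification, Mitoma/Aldous tightness, uniqueness via Theorem \ref{t1}) is the same as the paper's, and the tightness, initial-condition and uniqueness parts are fine. But the step you ``grant'' --- the Boltzmann--Gibbs principle with weight $(\mathbb L_n G)(x)$ --- is exactly where the argument breaks, and the paper's resolution of it is the main technical content of the subdiffusive case. The proof of the Boltzmann--Gibbs principle requires the function multiplying the local fluctuation to be uniformly continuous and of compact support, because the one-block step replaces that weight by its value at the left endpoint of each block of size $l$. Your weight $\mathbb L_n G(x) = n[\nabla_x^n G - \nabla_{x-1}^n G]$ is a second difference quotient with respect to the pure-jump function $W$: it is not continuous, it oscillates without control on every macroscopic interval (the jumps of $W$ are dense), and it converges to $\mc L_W G$ only in a weak sense --- enough for the $\mc L^2$ bounds used in tightness, as the paper itself notes, but not enough to run the block averaging. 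Lemma \ref{l6} gives $\mc L_W G \in \mc L^2(\bb R)$ for $G \in \mc H_2$; it does not give $\mathbb L_n G \to \mc L_W G$ in any topology that the Boltzmann--Gibbs proof can exploit.

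The paper's way around this is a two-stage replacement that your proposal omits. First one writes the drift in gradient form $n^{-1/2}\sum_x n\{g(\eta_s^n(x)) - g(\eta_s^n(x+1))\}\nabla_x^n G$ and uses the energy estimate (Theorem \ref{t6}, proved by Kipnis--Varadhan plus a weighted Cauchy--Schwarz) to replace the discrete $W$-gradient $\nabla_x^n G$ by the genuinely smooth function $H = dG/dW$; the energy estimate only requires square-summability of the weight against $dW$, not continuity, so it absorbs the replacement error (\ref{ec10}). Only after this does a second summation by parts produce $n\{H((x+1)/n) - H(x/n)\} \approx H' = \mc L_W G$, which is regular enough (c\`adl\`ag, uniformly $W$-H\"older) for the block argument; even then the comparison of the inhomogeneous Dirichlet form with the homogeneous one costs a factor $W(x_{i+1}/n)-W(x_i/n)$ that is bounded above but not below, which is why the final bound is $C/n$ rather than $C/n^2$ --- your ``local spectral-gap bound divided by $n^{1+1/\alpha}$'' does not engage with this degeneracy. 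A second, smaller gap: identifying $\langle M^n(G)\rangle_t$ ``through ergodicity'' fails by the naive variance computation, because the increments $W((x+1)/n)-W(x/n)$ do not tend to zero uniformly; one needs the spatial-averaging argument of Lemma \ref{l8}, again powered by the energy estimate. You correctly flag both degeneracies as the main obstacles, but the proposal does not contain the ideas that overcome them.
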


Notice that an {\em  annealed} result for the process $\eta_t^{n,W}$ implies an analogous result for the process $\eta_t^n$. In order to state a fluctuation result for the empirical density associated to $\eta_t^n$, we need first some definitions. The main difficulty is that the space where $\mc Y_\cdot^W$ is defined depends on $W$. Remember that for any $W$, any $t \geq$ and any $G \in \mc C_c(\bb R)$ the random variable $\mc Y_t^W(G)$ is well defined. In fact, the finite-dimensional distributions $(\mc Y_{t_1}^W(G),\dots,\mc Y_{t_l}^W(G))$ are also well defined. What is not well defined, is $\mc Y_t^W(G)$ as a process with c\`adl\`ag paths. By Kolmogorov's consistency theorem, we can consider $\mc Y_\cdot^W$ as a process defined in $\mc M(\bb R)^{[0,\infty)}$, where $\mc M(\bb R)$ is the space of Radon measures in $\bb R$. This space is very irregular, but it has the advantage that it does not depend on $W$. Let $\bb P^W$ be the distribution of $\mc Y_\cdot^W$ in $\mc M(\bb R)^{[0,\infty)}$. We denote by $\mc Y^{\text{ann}}_\cdot$ the process defined in $\mc M(\bb R)^{[0,\infty)}$ and having law $P \otimes \bb P^W$.
The following result is an immediate consequence of Theorem \ref{t3}:

\begin{corollary}
\label{c1}
Fix $\rho \in (0,\rho_c)$. Let $\eta_t^\xi$ be the zero-range process with jump rate $p^\xi(\cdot,\cdot)$ and initial distribution $\nu_\rho$. The fluctuation field $\mc Y_\cdot^n$ defined by
\[
\mc Y_t^n(G) = \frac{1}{\sqrt n} \sum_{x \in \bb Z} \big( \eta_{t n^{1+1/\alpha}}^\xi(x)-\rho\big)G(x/n)
\]
for $G \in \mc C_c(\bb R)$ converges in the sense of finite-dimensional distributions to $\mc Y_\cdot^{\text{ann}}$.
\end{corollary}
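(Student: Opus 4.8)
The plan is to transfer the quenched statement of Theorem \ref{t3} to the fixed environment $\xi$ by means of the distributional identity built into the definition of $\xi^n$, and then to recover the annealed law by integrating over the realization of $W$. Recall that for each fixed $n$ the array $\xi^n=\{\xi_x^n;x\in\bb Z\}$ is i.i.d.\ with the same common law $\zeta$ as $\xi$, and that $\mc Y_\cdot^{n,W}$ is produced from $\xi^n$ by precisely the same measurable recipe---invariant measure $\nu_\rho$, generator built from $p_n(\cdot,\cdot)$, observation at the rescaled time $tn^{1+1/\alpha}$, and spatial rescaling by $n^{-1/2}$---that produces $\mc Y_\cdot^n$ from $\xi$. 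Consequently, for each fixed $n$, the law of $\mc Y_\cdot^{n,W}$ obtained by letting $W$ (hence $\xi^n$) be random and averaging over the evolution coincides with the law of $\mc Y_\cdot^n$; in particular $\big(\mc Y_{t_1}^{n,W}(G),\dots,\mc Y_{t_l}^{n,W}(G)\big)\stackrel{\mathrm d}{=}\big(\mc Y_{t_1}^n(G),\dots,\mc Y_{t_l}^n(G)\big)$ once the left-hand side is averaged over $W$.

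With this identity in hand, fix times $0\le t_1<\cdots<t_l$, a function $G\in\mc C_c(\bb R)$ and a bounded continuous $\Phi:\bb R^l\to\bb R$. Writing $E^W$ for the expectation over the evolution with $W$ frozen, the identity gives
\[
 E\big[\Phi\big(\mc Y_{t_1}^n(G),\dots,\mc Y_{t_l}^n(G)\big)\big]
 = \int dP(W)\, E^W\big[\Phi\big(\mc Y_{t_1}^{n,W}(G),\dots,\mc Y_{t_l}^{n,W}(G)\big)\big].
\]
For $P$-almost every $W$, Theorem \ref{t3} yields convergence of $\mc Y_\cdot^{n,W}$ to $\mc Y_\cdot^W$ in $\mc D([0,\infty),\mc F')$; since the limit is an Ornstein-Uhlenbeck process with continuous paths, every $t_i$ is a.s.\ a continuity point and the inner expectation converges to $E^W\big[\Phi\big(\mc Y_{t_1}^W(G),\dots,\mc Y_{t_l}^W(G)\big)\big]$. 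As $\Phi$ is bounded, dominated convergence passes the limit through $\int dP(W)$, and the right-hand side converges to $\int dP(W)\,E^W[\Phi(\cdots)]$, which is precisely the finite-dimensional distribution of $\mc Y_\cdot^{\text{ann}}$ under $P\otimes\bb P^W$. This is the asserted convergence.

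Two routine points remain. First, Theorem \ref{t3} is phrased for test functions in $\mc F=\mc F_W$, whereas the corollary uses $G\in\mc C_c(\bb R)$, which need not be $W$-differentiable. This is harmless: the invariant marginals of $\nu_\rho$ do not depend on the environment, so at each fixed time $\mathrm{Var}\big(\mc Y_t^{n,W}(G)\big)\le C\,\chi(\rho)\|G\|_0^2$ uniformly in $n$ and $W$, whence $G\mapsto\big(\mc Y_{t_i}^{n,W}(G)\big)_i$ is $\mc L^2(\bb R)$-continuous uniformly in $n$. Since $\mc F_0\subseteq\mc F_W$ is dense in $\mc L^2(\bb R)$, one approximates $G$ by elements of $\mc F_W$ and passes to the limit, extending the quenched finite-dimensional convergence from $\mc F_W$ to all of $\mc C_c(\bb R)$. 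Second, $W\mapsto E^W[\Phi(\cdots)]$ must be measurable for $\int dP(W)$ to make sense; this follows from the joint measurability of the construction of $\eta^{n,W}$ in $(W,\omega)$, and the exceptional $P$-null set on which quenched convergence fails does not affect the integral.

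The substantive obstacle---and the reason the statement is restricted to finite-dimensional distributions rather than to convergence in a path space---is that the limit $\mc Y_\cdot^W$ lives in $\mc D([0,\infty),\mc F_W')$ with $\mc F_W$ \emph{depending on $W$}, and for distinct realizations the spaces $\mc F_W$ share only constant functions. There is thus no single dual space in which the averaged process could be said to have c\`adl\`ag paths, which is exactly why $\mc Y_\cdot^{\text{ann}}$ is realized only on the large, $W$-independent product space $\mc M(\bb R)^{[0,\infty)}$, and why the convergence can be asserted only coordinatewise, i.e.\ in the sense of finite-dimensional distributions evaluated at the common test functions $G\in\mc C_c(\bb R)$.
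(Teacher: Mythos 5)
Your proposal is correct and follows essentially the same route as the paper: the equality in law of $\xi^n$ and $\xi$ for each fixed $n$, the quenched finite-dimensional convergence from Theorem \ref{t3} extended from $\mc F_W$ to $\mc C_c(\bb R)$ by the uniform $\mc L^2(\nu_\rho)$ bound, and an averaging over the realization of $W$. The only difference is presentational: the paper packages the final averaging step as the abstract Lemma \ref{l10} (which also yields joint convergence with the environment), whereas you condition on $W$ directly and apply dominated convergence, which suffices since only the marginal law of the fluctuation vector is claimed.
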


As the invariance principle for $x(t)$ discussed before, this statement is an {\em annealed} result, which holds only after averaging over the environment. As we discussed above, the nuclear space $\mc F$ constructed in Section \ref{s2} {\em depends on the realization} of $W$. In particular, we do not expect a functional central limit theorem to hold for $\mc Y_t^n$, since there is no good set of test functions $G$ that works for any choice of the environment. Notice that we have constructed the process $\mc Y_\cdot^{\text{ann}}$ only to obtain this corollary, which does not involve the test space $\mc F$. 

\subsection{Fluctuations of the current of particles}
\label{s4.5}
In this section we restrict ourselves to dimension $d=1$ and we consider either the zero-range process with long jumps for $1<\alpha<2$ or the zero-range process with environment $W$ for $0<\alpha<1$. Given two different sites $x,y \in \bb Z$, we define $J_{x,y}(t)$ as the total current of particles between $x$ and $y$ up to time $t$. That is, $J_{x,y}(t)$ counts the number of jumps from $x$ to $y$ minus the number of jumps from $y$ to $x$, up to time $t$. It is not difficult to see that $J_{x,y}(t)$ is a well defined Poisson process, regardless of the dimension or the parameters of the zero-range process. With our definitions, $J_{x,y}(t)= - J_{y,x}(t)$, and for the zero-range process with environment $W$, $J_{x,y}(t)=0$ unless $x$,$y$ are neighbors. The conservation of the number of particles can be stated as a continuity equation for $\eta_t(x)$: 
\[
\eta_t(x) - \eta_0(x) = - \sum_{y \in \bb Z} J_{x,y}(t).
\]

For $x \in \bb Z$, we define $J_x(t)$ as the total current of particles through the bond $\<x,x+1\>$, that is,
\[
J_x(t) = \sum_{\substack{y \leq x \\ z>x}} J_{y,z}(t).
\]

The inhomogeneous Poisson process $J_{x,y}(t)$ is a birth and death process with birth rate $p(x,y) g(\eta_t(x))$ and death rate $p(x,y)g(\eta_t(y))$,  and therefore the birth and death rates of the process $J_x(t)$ are respectively
\[
\sum_{\substack{y \leq x \\ z>x}} p(y,z) g(\eta_t(y)), \sum_{\substack{y \leq x \\ z>x}} p(z,y) g(\eta_t(z)).
\] 

For the zero-range process in random environment $W$, these two sums are equal to $p_n(x,x+1)g(\eta_t(x))$ and $p(x,x+1)g(\eta_t(x+1))$ respectively. For the zero-range process, these sums are convergent for ``reasonable'' configurations of particles (for example, $a.s.$ with respect to $\nu_\rho$) precisely for $\alpha>1$.  When the number of particles is finite, the continuity equation tells us that 
\begin{equation}
\label{ec5}
J_x(t) = \sum_{y>x} \big(\eta_t(y)-\eta_0(y)\big),
\end{equation}
that is, the current through the bond $\<x,x+1\>$ up to time $t$ is equal to the number of particles to the right of $x$ at time $t$, minus the number of particles to the right of $x$ at time 0. Although equation (\ref{ec5}) does not make sense for the process starting from the measure $\nu_\rho$, let us assume for a moment that (\ref{ec5}) holds in that case. We will have
\begin{align*}
\frac{1}{\sqrt n} J_0(tn^\gamma) 
	&=\frac{1}{\sqrt n} \sum_{y>0} \big(\eta_t(y)-\eta_0(y)\big) -\frac{1}{\sqrt n} \sum_{y>0} \big(\eta_0^n(y)-\rho\big)\\
	&= \mc Y_t^n(H_0) - \mc Y_0^n(H_0),
\end{align*}
where $H_0(x)= \mathbf{1}(x>0)$ and $\gamma=1/\alpha$ for the zero-range process with long jumps and $\gamma = 1+1/\alpha$ in the case of the zero-range process with environment $W$. Of course, $\mc Y_t^n(H_0)$ is not well defined, but at least heuristically we can guess that $n^{-1/2} J_0(tn^\gamma)$ converges as $n \to \infty$ to $\mc Y_t(H_0) -\mc Y_0(H_0)$.

The key idea introduced by Rost and Vares \cite{RV} is that the {\em difference} $\mc Y_t^n(H_0) - \mc Y_0^n(H_0)$ is well defined and it is well behaved with respect to the limit in $n$. The asymptotic limits for the current are the following:

\begin{theorem}
 \label{t4}
 \begin{itemize}
 \item[]
  \item[i)] For the zero-range process with long jumps starting from the initial distribution $\nu_\rho$, 
  \[
   \lim_{n \to \infty} \frac{1}{n} J_0(tn^{2\alpha}) = \mc X(t)
  \]
  in the sense of convergence of finite-dimensional distributions, where $\mc X(t)$ is fractional Brownian motion of Hurst exponent $1/2\alpha$. 
  
  \item[ii)] For the zero-range process with environment $W$ and starting from the initial distribution $\nu_\rho$, 
  \[
  \lim_{n \to \infty} \frac{1}{n} J_0(tn^{2+2/\alpha}) = \mc X^W(t)
  \]
  in the sense of convergence of finite-dimensional distributions, where $\mc X(t)$ is a Gaussian process with mean zero and covariance matrix depending on $W$.
  
  \item[iii)] Let us denote by $J_0^\xi(t)$ the current associated to the zero-range process with random environment $\xi$. Under the conditions of $ii)$,
  \[
  \lim_{n \to \infty} \frac{1}{n} J_0(tn^{2+2/\alpha}) = \mc X(t)
  \]
  with respect to the annealed distribution $P^\xi \otimes \bb P^n$ and in the sense of finite-dimensional distributions, where $\mc X(t)$ is a fractional Brownian motion of Hurst exponent $\alpha/(2+2\alpha)$.
 \end{itemize}
\end{theorem}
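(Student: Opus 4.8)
The plan is to reduce all three parts to the density fluctuation theorems already proved (Theorem \ref{t2} for the long-jump process, Theorem \ref{t3} for the environment $W$) through the Rost--Vares identity, and then to identify the limiting law. The starting point is the microscopic continuity equation: for finitely many particles $J_0(t) = \sum_{y>0}(\eta_t(y)-\eta_0(y))$, so that formally $n^{-1/2}J_0(tn^\gamma) = \mc Y_t^n(H_0) - \mc Y_0^n(H_0)$, where $H_0 = \mathbf 1(x>0)$ and $\gamma = \alpha$ in the long-jump case, $\gamma = 1+1/\alpha$ in the environment case. The essential difficulty is that $H_0 \notin \mc F$: it is neither smooth nor decaying, the individual fields $\mc Y_t^n(H_0)$ are not even summable in equilibrium, and the density theorems do not apply to it directly. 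Only the time increment $\mc Y_t^n(H_0)-\mc Y_0^n(H_0) = n^{-1/2}J_0(tn^\gamma)$ is a bona fide random variable.

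First I would regularize: choose a family $H_0^\epsilon \in \mc F$ approximating $H_0$ (say smooth, equal to $1$ for $x \geq \epsilon$ and to $0$ for $x \leq 0$). For each fixed $\epsilon$ the function $H_0^\epsilon$ is a legitimate test function, so Theorems \ref{t2}/\ref{t3} give convergence of the increment $\mc Y_t^n(H_0^\epsilon)-\mc Y_0^n(H_0^\epsilon)$ to the Ornstein--Uhlenbeck increment $\mc Y_t(H_0^\epsilon)-\mc Y_0(H_0^\epsilon)$. For the analysis I would use the Dynkin/martingale decomposition $\mc Y_t^n(G)-\mc Y_0^n(G) = \int_0^t \mc Y_s^n(\mc L_n G)\,ds + M_t^n(G)$, where $\mc L_n$ is the rescaled generator acting on the test function and $M^n(G)$ is a martingale whose quadratic variation is governed by the Dirichlet form of the underlying random walk; the limit of $M^n$ is the driving noise $\mc Z$ of the limiting equation.

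The crux, and the step I expect to be the main obstacle, is a uniform-in-$n$ $L^2$ bound controlling the correction. Writing the difference as the space--time increment of the field tested against the bump $H_0-H_0^\epsilon$, one must show that
\[
 \lim_{\epsilon \to 0}\ \limsup_{n\to\infty}\ \mathbb E\Big[\big((\mc Y_t^n(H_0)-\mc Y_0^n(H_0)) - (\mc Y_t^n(H_0^\epsilon)-\mc Y_0^n(H_0^\epsilon))\big)^2\Big] = 0.
\]
This is exactly where the energy estimate enters: both the drift integral and the martingale variance of the bump increment are bounded by the Dirichlet form $\langle H_0-H_0^\epsilon, -\mc L_n(H_0-H_0^\epsilon)\rangle$, which is made small as the bump localizes. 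Because this estimate rests only on reversibility, it works identically in the super- and subdiffusive regimes. Combining with the previous step yields convergence of $n^{-1/2}J_0(tn^\gamma)$ to $\mc W(t) := \lim_{\epsilon}(\mc Y_t(H_0^\epsilon)-\mc Y_0(H_0^\epsilon))$, which is well defined as an $L^2$-limit; the time scale $n^{2\alpha}$ (resp.\ $n^{2+2/\alpha}$) and normalization $n^{-1}$ in the statement then follow by relabelling $n \mapsto n^2$ in the density scaling $n^{\alpha},\,n^{-1/2}$ (resp.\ $n^{1+1/\alpha},\,n^{-1/2}$).

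It remains to identify $\mc W$. It is Gaussian with mean zero, as an $L^2$-limit of Gaussian variables arising from a linear equation with Gaussian data, and it has stationary increments because the limiting process starts from its invariant white-noise measure and is time-stationary. Its covariance is $\mathbb E[\mc Y_t(F)\mc Y_s(G)] = \chi(\rho)\langle P_{|t-s|}F,G\rangle$ with $P_u = e^{uD\mc L}$ and $D=\varphi'(\rho)$, whence $\mathbb E[\mc W(t)^2] = 2\chi(\rho)\langle (I-P_t)H_0,H_0\rangle$. For part \emph{i)} the fractional Laplacian gives $P_u$ the Fourier symbol $e^{-uDc|\xi|^\alpha}$; since $H_0$ is scale invariant, a change-of-variables computation shows that the rescaled field $b^{-1/2}\mc Y_{b^\alpha t}(\phi(\cdot/b))$ has the same covariance as $\mc Y_t(\phi)$, so $\mc W(\lambda t)\stackrel{d}{=}\lambda^{1/(2\alpha)}\mc W(t)$; a Gaussian, self-similar, stationary-increment process is fractional Brownian motion, giving Hurst exponent $1/(2\alpha)$. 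For part \emph{ii)} the operator $\mc L_W$ has no scale invariance for fixed $W$, so I obtain only a mean-zero Gaussian process with the $W$-dependent covariance given by the same formula with $P_u^W=e^{uD\mc L_W}$. For part \emph{iii)} I average over the environment; the $\alpha$-stable scaling $W(\lambda\cdot)\stackrel{d}{=}\lambda^{1/\alpha}W(\cdot)$ restores self-similarity of the annealed law, and the analogous computation yields fractional Brownian motion of Hurst exponent $\alpha/(2+2\alpha)$, consistent with the heuristic value $\gamma=\alpha/(1+\alpha)$ for the current variance growth.
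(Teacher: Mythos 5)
Your overall architecture (approximate $H_0$, control the error uniformly in $n$ via the energy estimate, exchange the limits, identify the limit by Gaussianity and self-similarity) is the same as the paper's, but your choice of approximating family breaks the argument at its central step. You take $H_0^\epsilon$ smooth, equal to $0$ for $x\leq 0$ and to $1$ for $x\geq\epsilon$. First, such a function does not decay at infinity, so it is not in $\mc L^2(\bb R)$, hence not in $\mc F$ (nor in the weighted spaces $\mc H_n(W)$); Theorems \ref{t2} and \ref{t3} do not apply to it and $\mc Y_t^n(H_0^\epsilon)$ is not even a well-defined random variable under $\nu_\rho$. Second, and more fatally, the correction $H_0-H_0^\epsilon$ is a bump supported on $(0,\epsilon)$ with a \emph{unit jump at the origin}: it inherits exactly the singular part of $H_0$. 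Its Dirichlet form therefore does not go to zero as the bump localizes --- it diverges. In the superdiffusive case $\mc E_n(H_0-H_0^\epsilon,H_0-H_0^\epsilon)\sim c\,n^{\alpha-1}\to\infty$ for $\alpha\in(1,2)$ (the pairs straddling the jump contribute a divergent factor), and in the subdiffusive case the bond at the origin contributes $1/\big(W(0)-W(-1/n)\big)\to\infty$. So your key display $\lim_{\epsilon\to0}\limsup_n\bb E[(\cdots)^2]=0$ fails: the energy bound you invoke gives $+\infty$ for every fixed $\epsilon$. The difficulty with $H_0$ is not its discontinuity at the origin (that discontinuity \emph{is} the current and is harmless for the increment $\mc Y_t-\mc Y_0$); it is the failure of $H_0$ to vanish at $+\infty$. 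The paper's approximants go in the opposite direction: $G_l(x)=(1-x/l)^+\mathbf 1(x\geq 0)$ keeps the jump at the origin but ramps down to zero over a \emph{growing} interval $[0,l]$, so that $\mc E_n(H_0-G_l,H_0-G_l)\leq c\,l^{1-\alpha}\to 0$ uniformly in $n$; in the subdiffusive case one must moreover adapt the ramp to the environment, $G_l(x)=(1-W(x)/W(l))^+\mathbf 1(x\geq 0)$, giving $\mc E_n(H_0-G_l,H_0-G_l)=1/W(l)\to 0$ --- a point your proposal does not address at all.

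Two secondary gaps. In part $iii)$ you pass from the quenched Gaussian processes $\mc X^W$ to the annealed limit by "averaging over the environment," but a mixture of Gaussian laws is in general not Gaussian; the paper needs the representation $\mc X^W(t)=\sigma(W,t)\zeta(W,t)$ with $\zeta(W,t)$ standard normal independent of $W$ (all the $W$-dependence sits in the variance) to conclude that the annealed one-dimensional marginals are Gaussian, and it needs Lemma \ref{l10} to justify the annealed convergence itself. Also, your covariance formula $2\chi(\rho)\<(I-P_t)H_0,H_0\>$ is only formal (both terms are separately infinite); the paper sidesteps this by a direct scaling identity $J_0^n(t)\overset{d}{=}t^{1/2\alpha}J_0^n(1)$, which is cleaner once the limit is known to be Gaussian with stationary increments.
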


\section{Density fluctuations: the superdiffusive case}
\label{s4}
In this section we prove Theorem \ref{t2}. The proof follows the usual approach to convergence of stochastic processes, which consists of two steps. First we prove that the sequence of distributions of $\{\mc Y_\cdot^n\}_n$ in $\mc D([0,T],\mc F')$ is tight, from which we conclude that the sequence has at least one accumulation point $\mc Y_\cdot$, defined as a c\`adl\`ag process on $\mc F'$. After that, we prove that the process $\mc Y_\cdot$ is a solution of the Ornstein-Uhlenbeck equation (\ref{ec7}). By Proposition \ref{p2}, there is at most one of such solutions. We finally conclude that the sequence $\{\mc Y_\cdot^n\}_n$ is relatively compact and has only one limit point, and therefore it is convergent.

\subsection{Tightness of $\{\mc Y_\cdot^n\}_n$}
\label{s4.1}
In this section we prove tightness of $\{\mc Y_\cdot^n\}_n$. To begin, we recall Mitoma's criterion \cite{Mit}:

\begin{proposition}
Let $H$ be a separable Hilbert space, and let $\mc F \subseteq H$ be a nuclear Fr\'echet space. Let $\mc F'$ be the topological dual of $\mc F$ with respect to $H$, and let $\{\mc Q^n\}_n$ be a sequence of distributions in $\mc D([0,T],\mc F')$. For a given function $G \in \mc F$, let $\mc Q^{n,G}$ be the distribution in  $\mc D([0,T],\bb R)$ defined by $\mc Q^{n,G}(y(\cdot) \in A) =\mc Q^{n}(\mc Y(\cdot)(G) \in A)$. The sequence $\{\mc Q^n\}_n$ is tight if and only if  $\{\mc Q^{n,G}\}_n$ is tight for any $G \in \mc F$.  
\end{proposition}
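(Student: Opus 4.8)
The plan is to prove the two implications separately, with necessity being routine and sufficiency carrying all the weight. For necessity I would fix $G \in \mc F$ and note that the evaluation $x \mapsto \<x,G\>$ is a continuous linear functional on $\mc F'$, so the induced map $\pi_G : \mc D([0,T],\mc F') \to \mc D([0,T],\bb R)$, $\omega \mapsto \<\omega(\cdot),G\>$, is continuous for the Skorohod topologies: if $\omega_k \to \omega$ with time changes $\lambda_k \to \mathrm{id}$ for which $\omega_k \circ \lambda_k \to \omega$ uniformly in $\mc F'$, then the same $\lambda_k$ yield $\<\omega_k \circ \lambda_k, G\> \to \<\omega,G\>$ uniformly in $\bb R$. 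Since $\mc Q^{n,G} = \mc Q^n \circ \pi_G^{-1}$ and continuous images of tight families are tight, tightness of $\{\mc Q^n\}_n$ forces tightness of each $\{\mc Q^{n,G}\}_n$.

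For sufficiency I would exploit the structure coming from nuclearity. Taking the $\mc A$-eigenbasis $\{\varphi_j\}_j$ of the previous section (or any orthonormal basis of $\mc L^2(\bb R)$ lying in $\mc F$), the norms $\|\cdot\|_p$ satisfy $\|\varphi_j\|_p^2 \asymp \mu_j^{p}$ with $\mu_j \to \infty$, and nuclearity gives $\sum_j \mu_j^{-r} < +\infty$ for $r$ large. Writing $H_{-p}$ for the dual Hilbert space of the completion $H_p$ of $\mc F$ under $\|\cdot\|_p$, one has $\mc F' = \bigcup_p H_{-p}$, and each ball $\{x \in \mc F' : \|x\|_{-p} \le R\}$ is, by the Montel property of the strong dual of a nuclear Fréchet space, a compact subset of $\mc F'$. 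I would then invoke a general tightness criterion for $\mc D([0,T],\mc E)$ (Jakubowski's criterion) whose first hypothesis—tightness of $\<\mc Y(\cdot),G\>$ in $\mc D([0,T],\bb R)$ for every $G$ in a point-separating, addition-closed family of continuous functionals—is exactly our assumption for the family $\{\<\cdot,G\> : G \in \mc F\}$. This reduces the whole problem to the compact containment condition: for every $\epsilon > 0$ there exist $p$ and $R$ with
\[
\sup_n \mc Q^n\Bigl( \sup_{0 \le t \le T} \|\mc Y_t\|_{-p} > R \Bigr) \le \epsilon,
\]
since on the complementary event the trajectory is confined to the $\mc F'$-compact ball above.

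The compact containment is the heart of the matter and the step I expect to be the main obstacle. From tightness of each $\{\mc Q^{n,\varphi_j}\}_n$ in $\mc D([0,T],\bb R)$ one extracts, coordinate by coordinate, a uniform bound $\sup_n \mc Q^n(\sup_t |\mc Y_t(\varphi_j)| > M_j) \le \epsilon 2^{-j}$, using that the supremum of a tight family of real càdlàg processes is a tight family of reals. The hard part will be that these coordinatewise thresholds $M_j$ carry no a priori growth control, whereas the Hilbertian norm $\|x\|_{-p}^2 \asymp \sum_j \mu_j^{-p}\, x(\varphi_j)^2$ demands $\sum_j \mu_j^{-p} M_j^2 < +\infty$. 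It is precisely here that the Hilbert–Schmidt decay $\sum_j \mu_j^{-r} < +\infty$ must be married to a careful, tandem choice of the exponent $p$ and of the thresholds, so that the tail $\sum_{j > N} \mu_j^{-p}\, \sup_t |\mc Y_t(\varphi_j)|^2$ is small in probability, uniformly in $n$. Once this single Hilbertian sup-bound is secured the compact containment follows, Jakubowski's criterion delivers tightness in $\mc D([0,T],\mc F')$, and the proof is complete.
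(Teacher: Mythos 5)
First, note that the paper does not prove this statement at all: by the author's own convention it is labelled a ``Proposition'' precisely because it is quoted from Mitoma \cite{Mit}, so your attempt can only be judged against Mitoma's original argument. Your necessity direction is fine, and your overall skeleton for sufficiency (reduce to a compact containment condition in some ball of a dual Hilbert space $H_{-p}$, using that bounded sets in the strong dual of a nuclear Fr\'echet space are relatively compact, plus finite-dimensional tightness) is indeed the right shape and essentially the shape of Mitoma's proof.

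However, there is a genuine gap, and you have located it yourself without closing it: the compact containment condition is never actually established. The coordinatewise strategy --- choose $M_j$ with $\sup_n \mc Q^n(\sup_t|\mc Y_t(\varphi_j)|>M_j)\leq \epsilon 2^{-j}$ and hope that $\sum_j \mu_j^{-p}M_j^2<\infty$ for some $p$ --- cannot work as stated, because tightness applied separately to each basis vector gives no control whatsoever on the growth of $M_j$ relative to any fixed power $\mu_j^{p/2}$; one can cook up families where every admissible choice of $M_j$ grows faster than any such power. The missing idea is a \emph{uniformity over $G$} statement: one must show that the functional $L(G)=\sup_n E^{\mc Q^n}[\,1\wedge\sup_{t\leq T}|\mc Y_t(G)|\,]$ is continuous at $G=0$ for some seminorm $\|\cdot\|_p$, i.e.\ $L(G)\leq\epsilon$ whenever $\|G\|_p\leq\delta$. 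This is obtained not from countably many applications of the hypothesis but from a Baire category (barrelledness) argument on the Fr\'echet space $\mc F$, using that $L$ is subadditive, balanced and lower semicontinuous and that $L(G/k)\to 0$ as $k\to\infty$ for each fixed $G$ by the assumed one-dimensional tightness. Only after this equicontinuity is in hand does nuclearity enter, via a Hilbert--Schmidt embedding $H_q\hookrightarrow H_p$: writing $\sup_t\|\mc Y_t\|_{-q}^2=\sum_j\sup_t\med{\mc Y_t,e_j^q}^2$ for an orthonormal basis $\{e_j^q\}$ of $H_q$ with $\sum_j\|e_j^q\|_p^2<\infty$, the uniform bound $L(G)\leq\epsilon$ on the $\|\cdot\|_p$-ball yields the required tail estimate. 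Without this step your proof does not go through; with it, it becomes Mitoma's.
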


Applied to the sequence $\{\mc Y_\cdot^n\}_n$, this theorem says that $\{\mc Y_\cdot^n\}_n$ is tight if and only if $\{\mc Y_\cdot^n(G)\}_n$ is tight for any $G \in \mc S(\bb R^d)$. By Dynkin's formula and after some manipulations, we see that
\begin{equation}
\label{ec8}
\mc Y_t^n(G) = \mc Y_0^n(G) + \int_0^t \frac{1}{n^{d/2}} \sum_{x,y \in \bb Z^d} \big\{ g(\eta_s^n(x))-\varphi(\rho)\big\} \mc L_n G(x/n) ds + \mc M_t^n(G),
\end{equation}
where $\mc M_t^n(G)$ is a martingale of quadratic variation
\[
\<\mc M_t^n(G)\> = 2 \int_0^t \frac{1}{n^{2d}} \sum_{x,y \in \bb Z^d} q\Big( \frac{y-x}{n} \Big) \Big(G(y/n)-G(x/n)\Big)^2 g\big(\eta_s^n(x)\big)ds.
\]

Here we use the notation $\mc L_n G$ for the discrete approximation of $\mc L G$ given by
\[
 \mc L_n G(x) = \frac{1}{n^d} \sum_{y \in \bb Z^d} q(y/n) \big(G(x+y/n)+G(x-y/n)-2G(x)\big).
\]

Notice that $\mc L_n G(x)$ is just a Riemann sum for $\mc L G(x)$, and due to the symmetry of the kernel $q(\cdot)$ and the smoothness of $G$, the sum converges to $\mc L G(x)$, uniformly in $x$. 

In order to prove tightness for the sequence $\{\mc Y_\cdot^n(G)\}_n$, it is enough to prove tightness for $\{\mc Y_0^n(G)\}_n$, $\{\mc M_\cdot^n(G)\}_n$ and the integral term in (\ref{ec8}). The easiest one is the initial condition: we have already observed that $\mc Y_0^n(G)$ converges to a normal random variable of mean zero and variance $\chi(\rho) \int G(x)^2 dx$. For the other two terms, we use {\em Aldous' criterion} which now we state.

\begin{proposition}[Aldous' criterion]
\label{p3}
A sequence of distributions $\{\mc P^n\}$ in the path space $\mc D([0,T],\bb R)$ is tight if:

\begin{itemize}
\item[i)] For any $t \in [0,T]$ the sequence $\{\mc P_t^n\}$ of distributions in $\bb R$ defined by $\mc P_t^n(A) = \mc P^n(y(t) \in A)$ is tight,

\item[ii)] For any $\epsilon >0$,
\[
\lim_{\delta>0} \limsup_{n \to \infty} \sup_{\substack{\tau \in \bb T_T\\ \theta \leq \delta}} \mc P^n\big[|y(\tau+\theta)-y(\tau)| >\epsilon\big] =0,
\]
\end{itemize}
where $\bb T_T$ is the set of stopping times bounded by $T$  and $y(\tau+\theta)=y(T)$ if $\tau+\theta>T$.
\end{proposition}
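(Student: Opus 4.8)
The plan is to deduce tightness from Prokhorov's theorem together with the classical characterization of relatively compact subsets of the Skorohod space $\mc D([0,T],\bb R)$: a set has compact closure if and only if it is uniformly bounded in the supremum norm and the Skorohod modulus $w'_y(\delta)$ — the infimum, over $\delta$-sparse partitions $0=t_0<\cdots<t_r=T$, of the maximal oscillation of $y$ on the half-open intervals $[t_{i-1},t_i)$ — tends to $0$ uniformly as $\delta\to0$. Thus it suffices to establish, uniformly in $n$, two mass estimates: (a) $\lim_{a\to\infty}\limsup_n \mc P^n[\sup_{0\le t\le T}|y(t)|>a]=0$, and (b) for every $\epsilon>0$, $\lim_{\delta\to0}\limsup_n \mc P^n[w'_y(\delta)>\epsilon]=0$. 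From these one assembles, for each $\eta>0$, a compact set carrying mass $\ge 1-\eta$ for all large $n$; the finitely many small $n$ are absorbed since each $\mc P^n$ is individually tight on the Polish space $\mc D([0,T],\bb R)$. This is exactly the Prokhorov argument.

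The amplitude bound (a) is the easier half. One reduces to controlling $\sup_t|y(t)-y(0)|$, and tightness of $y(0)$ is hypothesis (i) at $t=0$. Once (b) is available, the running oscillation is controlled by evaluating $y$ on a \emph{fixed} finite grid — whose marginals are tight by (i) — and adding a modulus term, so (a) follows from (i) and (b) together. I would therefore concentrate on (b).

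For the core estimate (b), fix $\epsilon$ and introduce the level-crossing stopping times $\tau_0=0$ and $\tau_{j+1}=\inf\{t\ge\tau_j:\ |y(t)-y(\tau_j)|>\epsilon\}\wedge T$, with $N=\min\{j:\ \tau_j=T\}$. The geometric point is that for $u<\tau_{j+1}$ one has $|y(u)-y(\tau_j)|\le\epsilon$, so the oscillation of $y$ on each $[\tau_j,\tau_{j+1})$ is at most $2\epsilon$; hence if all crossing gaps exceed $\delta$ and $N\le K$, the partition $\{\tau_j\}$ is $\delta$-sparse and witnesses $w'_y(\delta)\le2\epsilon$. Consequently
\[
 \mc P^n[w'_y(\delta)>2\epsilon]\ \le\ \mc P^n[N>K]\ +\ \sum_{j=0}^{K-1}\mc P^n[\tau_{j+1}-\tau_j\le\delta,\ \tau_j<T].
\]
Writing $\gamma$ for a uniform bound on the clustering probabilities in the sum, the first term is controlled by the counting estimate: on $\{N>K\}=\{\tau_K<T\}$ at least $K-T/\delta$ of the $K$ gaps are $\le\delta$ (since each large gap contributes more than $\delta$ to $\tau_K<T$), so Markov's inequality gives $\mc P^n[\tau_K<T]\le K\gamma/(K-T/\delta)$, which is of order $\gamma$ upon taking $K\approx 2T/\delta$. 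Sending $n\to\infty$ and then $\delta\to0$, so that $\gamma\to0$ by hypothesis (ii), closes the argument.

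The hard part will be the clustering estimate $\mc P^n[\tau_{j+1}-\tau_j\le\delta,\ \tau_j<T]\le\gamma$, and this is precisely where the stopping-time formulation of (ii) is indispensable. The difficulty is structural: (ii) bounds increments over a \emph{deterministic} lag anchored at a stopping time (a ``sup-outside-the-probability'' quantity), whereas the clustering event is an exit-before-time-$\delta$ event (a ``sup-inside'' quantity) involving the \emph{random} lag $\tau_{j+1}-\tau_j$. I would resolve this by splitting according to the value of $y$ at the deterministic time $\tau_j+\delta$: on $\{|y(\tau_j+\delta)-y(\tau_j)|\ge\epsilon/2\}$ one applies (ii) directly to the stopping time $\tau_j$ with lag $\delta$, while on the complement the crossing forces $|y(\tau_{j+1})-y(\tau_j+\delta)|\ge\epsilon/2$, to be controlled by a second application of (ii) anchored at the stopping time $\tau_{j+1}$. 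Making this second application rigorous — reconciling the random residual lag with the deterministic-lag hypothesis, and accommodating the càdlàg jump at the crossing instant so that the \emph{jump-insensitive} modulus $w'$ (not the jump-sensitive modulus) is the right object — is the technical heart of the statement, essentially Aldous' lemma.
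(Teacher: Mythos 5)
First, note that the paper does not prove this statement at all: by the convention announced in Remark 1, ``Proposition'' is reserved for results proved elsewhere, and this is the classical criterion of Aldous, quoted for use in the tightness arguments of Sections \ref{s4} and \ref{s5}. So there is no in-paper argument to compare with, and your proposal must stand on its own. Its overall architecture --- Prokhorov's theorem plus the $w'$-characterization of compactness in $\mc D([0,T],\bb R)$, level-crossing stopping times $\tau_j$, a counting bound on the number of crossings, and a clustering estimate $\mc P^n[\tau_{j+1}-\tau_j\le\delta,\ \tau_j<T]\le\gamma$ --- is indeed the standard proof of Aldous' theorem, and the reductions you make (amplitude bound from hypothesis $i)$ on a finite grid once the modulus is controlled; oscillation at most $2\epsilon$ on each $[\tau_j,\tau_{j+1})$) are correct.

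Two genuine gaps remain. The first is a quantifier problem in the step you declare closed: your union bound gives $\mc P^n[w'_y(\delta)>2\epsilon]\le\mc P^n[N>K]+K\gamma$ with $K\approx 2T/\delta$, so the second term is of order $\gamma(\delta)/\delta$, and hypothesis $ii)$ only provides $\gamma(\delta)\to0$, not $\gamma(\delta)=o(\delta)$; sending $\delta\to0$ therefore does not close the argument as written. The standard repair is a two-scale choice: for fixed $\epsilon,\eta$, use one scale $\delta_1$ (with bound $\eta$) only to produce a finite $K=K(\epsilon,\eta)$ with $\limsup_n\mc P^n[\tau_K<T]\le\eta$; then, with $K$ frozen, invoke the clustering estimate again at a much smaller scale $\delta_2$ with bound $\eta/K$, and control $w'(\delta_2)$ rather than $w'(\delta_1)$. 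The second gap is the one you flag yourself: the clustering estimate requires bounding an increment over the \emph{random} lag $\tau_j+\delta-\tau_{j+1}$, while $ii)$ only controls deterministic lags after a stopping time, so a ``second application of $ii)$ anchored at $\tau_{j+1}$'' is not by itself legitimate. The missing ingredient is Aldous' averaging lemma: integrate the pointwise inequality $\mathbf 1\{|y(\tau)-y(\sigma)|\ge\epsilon\}\le\mathbf 1\{|y(\tau+v)-y(\tau)|\ge\epsilon/2\}+\mathbf 1\{|y(\sigma+(\tau-\sigma+v))-y(\sigma)|\ge\epsilon/2\}$ over a deterministic $v\in[0,\delta]$, apply Fubini, and use $ii)$ for each fixed lag, which converts the fixed-lag hypothesis into a two-stopping-time estimate. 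Since that lemma is the actual content of Aldous' criterion, deferring it leaves the proposal as a correct reduction rather than a proof.
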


Now we prove tightness of the martingale term. By the optional sampling theorem, we have

\begin{align*}
\bb P^n \big[ \big|\mc M_{\tau+\theta}^n(G)- \mc M_\tau^n(G)\big|> \epsilon\big] 
	& \leq \frac{1}{\epsilon^2} \bb E^n \big[ \big\< \mc M_{\tau+\theta}^n(G)\big\> - \big\< \mc M_\tau^n(G)\big\>\big]\\
	& \leq \frac{2\theta}{\epsilon^2} \mc E_n(G,G), 
\end{align*}
where in the last inequality we have used the fact that $g(\cdot)$ is bounded, and we have denoted by $\mc E_n(G,G)$ the {\em Dirichlet form} associated to $\mc L_n$:
\[
 \mc E_n(G,G) = \frac{1}{n^{2d}} \sum_{x,y \in \bb Z^d} q\Big(\frac{y-x}{n}\Big) \Big(G(y/n)-G(x/n)\Big)^2. 
\]

In particular, the martingale $\mc M_t^n(G)$ satisfies the conditions of Aldous' criterion. The integral term can be treated in a similar way:

\begin{align*}
 \bb E^n\Big[ \Big( \int_{\tau}^{\tau+ \theta} \frac{1}{n^{d/2}} 
 	&\sum_{x \in \bb Z^d} \big(g(\eta_s^n(x))-\varphi(\rho)\big) \mc L_n G(x/n) ds \Big)^2 \Big] \leq \\
 	& \leq \theta \int_0^T \bb E^n \Big[\Big(\frac{1}{n^{d/2}} \sum_{x \in \bb Z^d} \big(g(\eta_s^n(x)) -\varphi(\rho)\big)\mc L_n G(x/n) \Big)^2 \Big] ds \\
	& \leq \frac{\theta T}{n^d} \sum_{x \in \bb Z^d} \text{Var} \big(g(\eta(x)), \nu_\rho\big) \big( \mc L_n G(x/n)\big)^2\\
	&\leq C \theta T
\end{align*}
for some constant $C$ which depends only on $G$ and $\rho$. Therefore, we conclude that the sequence $\{\mc Y_\cdot^n\}_n$ is tight. Notice that as a by-product of these computations, the sequence of $\mc F'$-valued martingales $\{\mc M_\cdot^n\}_n$ is also tight, as well as the sequence of integrals.

\subsection{The Boltzmann-Gibbs principle}
\label{s4.2}
Let $\mc Y_\cdot$ be a limit point of the sequence $\{\mc Y_\cdot^n\}_n$. We want to prove that $\mc Y_\cdot$ is a generalized solution of (\ref{ec7}). It is enough to prove the following result.

\begin{theorem}
\label{t5}
 For any function $G \in \mc F$,
 \begin{itemize}
  \item[i)] The integral
  \[
   \int_0^T \mc Y_t(\mc L G) ds
  \]
is well defined as a random variable in $\mc F'$.
  \item[ii)] The process
  \[
   \mc M_t(G) = \mc Y_t(G) - \mc Y_0(G) - \varphi'(\rho) \int_0^t \mc Y_s(\mc L G) ds
  \]
 is a martingale of quadratic variation $\< \mc M_t(G)\> =2t \varphi(\rho) \mc E(G,G)$. 
 \end{itemize}
\end{theorem}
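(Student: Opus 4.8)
The plan is to identify any subsequential limit $\mc Y_\cdot$ of the (already tight) sequence $\{\mc Y_\cdot^n\}_n$ by passing to the limit term-by-term in the Dynkin decomposition (\ref{ec8}). Part $i)$ is the soft half. Since $\nu_\rho$ is a product measure and $\eta_t^n$ is stationary, at each fixed time the field satisfies the uniform bound $\bb E^n[\mc Y_t^n(\psi)^2] = \chi(\rho)\,n^{-d}\sum_x \psi(x/n)^2$, so in the limit $\mc Y_t$ extends to a linear random functional on $\mc L^2(\bb R^d)$ with $\bb E[\mc Y_t(\psi)^2]\le \chi(\rho)\|\psi\|_0^2$. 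Because the map $\varphi\mapsto \mc L\varphi$ is continuous from $\mc F$ into $\mc C_{p,0}\hookrightarrow \mc L^2(\bb R^d)$ (Section \ref{s1}), the composition $G\mapsto \mc Y_t(\mc L G)$ is a well-defined linear random functional; Fubini and the uniform second-moment bound then produce $\int_0^T \mc Y_t(\mc L G)\,dt$ as an element of $\mc F'$ via Lemma \ref{l2.1}.

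For part $ii)$, the key is to linearize the integrand of (\ref{ec8}), replacing $g(\eta_s^n(x))-\varphi(\rho)$ by $\varphi'(\rho)\big(\eta_s^n(x)-\rho\big)$. Set $V(\eta(x)) = g(\eta(x))-\varphi(\rho)-\varphi'(\rho)(\eta(x)-\rho)$. Using the zero-range identity $\int g\,d\nu_\rho=\varphi(\rho)$ together with $\tfrac{d}{d\varphi}\bb E_{\bar\nu_\varphi}[f]=\varphi^{-1}\mathrm{Cov}(\eta,f;\bar\nu_\varphi)$, one checks that $\mathrm{Cov}(g(\eta),\eta;\nu_\rho)=\varphi(\rho)=\varphi'(\rho)\chi(\rho)$, so that $V$ is orthogonal in $\mc L^2(\nu_\rho)$ both to the constants and to the conserved field $\eta(x)-\rho$. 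This double orthogonality is the engine of the \emph{Boltzmann-Gibbs principle}, which I would state as
\[
\lim_{n\to\infty}\bb E^n\Big[\Big(\int_0^t \frac{1}{n^{d/2}}\sum_{x\in\bb Z^d} V(\eta_s^n(x))\,\mc L_n G(x/n)\,ds\Big)^2\Big]=0.
\]
Granting this, the integral term of (\ref{ec8}) equals $\varphi'(\rho)\int_0^t \mc Y_s^n(\mc L_n G)\,ds$ up to an $\mc L^2$-negligible error; since $\mc L_n G\to \mc L G$ in $\mc C_{p,0}$ and the field is $\mc L^2$-controlled, this converges to $\varphi'(\rho)\int_0^t \mc Y_s(\mc L G)\,ds$. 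Hence $\mc M_t^n(G)\to\mc M_t(G)$. Each $\mc M_t^n(G)$ is an $\mc L^2$-bounded martingale whose quadratic variation converges, by a Riemann-sum argument after replacing $g(\eta_s^n(x))$ by $\varphi(\rho)$, to $2t\varphi(\rho)\mc E(G,G)$; uniform integrability then transfers both the martingale property and the quadratic variation to the limit.

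The real content, and the main obstacle, is the Boltzmann-Gibbs estimate. I would reduce it to the variational $H_{-1}$ bound for reversible dynamics,
\[
\bb E^n\Big[\Big(\int_0^t F(\eta_s^n)\,ds\Big)^2\Big]\le \frac{C\,t}{n^\alpha}\,\|F\|_{-1,L}^2,\qquad \|F\|_{-1,L}^2=\sup_{h}\big\{2\<F,h\>_{\nu_\rho}-\<h,(-L)h\>_{\nu_\rho}\big\},
\]
applied to $F=n^{-d/2}\sum_x V(\eta(x))\,\mc L_n G(x/n)$, where the factor $n^{-\alpha}$ records the acceleration of time by $n^\alpha$. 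It then remains to show $\|F\|_{-1,L}^2=o(n^\alpha)$, which I would obtain by a multiscale argument: on boxes of side $\ell$ one replaces $V(\eta(x))$ by its canonical expectation $\tilde V(\bar\eta^\ell(x))$ given the local density, at a cost governed by the spectral gap of the zero-range dynamics in the box tested against the Dirichlet form of $L$. The double orthogonality forces $\tilde V(\rho)=\tilde V'(\rho)=0$, so $\tilde V(\bar\eta^\ell)=O\big((\bar\eta^\ell-\rho)^2\big)$ is quadratically small, contributing a fluctuation factor $\ell^{-d}$; and the non-locality of the long-jump Dirichlet form is an asset here, since the gap decays only like $\ell^{-\alpha}$ rather than $\ell^{-2}$. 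Optimizing over $\ell$ against the $n^\alpha$ scaling yields the required decay. The delicate points will be controlling the long-range Dirichlet form in the replacement step and checking that the spectral-gap and fluctuation gains genuinely beat the anomalous time scale — precisely where the superdiffusive model departs from the classical diffusive Boltzmann-Gibbs principle.
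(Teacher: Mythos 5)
Your proposal is correct and follows essentially the same route as the paper: part $i)$ via the uniform second-moment bound $\bb E^n[\mc Y_t^n(H)^2]\le C\|H\|_{p,\infty}$, the continuity of $\mc L:\mc F\to\mc C_{p,0}$ and Lemma \ref{l2.1}; part $ii)$ via the Boltzmann-Gibbs linearization of the integrand in (\ref{ec8}) followed by transfer of the martingale property and of the quadratic variation $2t\varphi(\rho)\mc E(G,G)$ to the limit using uniform $\mc L^2$ (resp.\ $\mc L^4$) bounds. The only divergence is that you sketch a proof of the Boltzmann-Gibbs principle itself (the $H_{-1}$ bound, one-block replacement, double orthogonality of $V$, spectral gap), whereas the paper states it as Proposition \ref{p4} and outsources it to Chapter 11.1 of \cite{KL}, noting only that the ellipticity condition $\inf\{q(z);|z|=1\}>0$ permits comparison with the nearest-neighbor Dirichlet form — a comparison that makes the $\ell^{-2}$ gap suffice against the $n^\alpha$ time scale, so your appeal to an $\ell^{-\alpha}$ gap for the long-jump dynamics, while plausible, is an unnecessary extra input.
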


We will assume this theorem and we will finish the proof of Theorem \ref{t2}. We have already seen that for any fixed time $t$, the $\mc F$-valued random variable $\mc Y_t$ is a Gaussian field of mean zero and covariance matrix $\chi(\rho) \delta(y-x)$. Remember that $\{\mc M_\cdot^n\}_n$ is tight. Taking an adequate subsequence, we can assume that there is an $\mc F'$-valued process $\mc M_t$ for which $\mc M_t(G)$ is as above. Since all the projections of $\mc M_t$ are martingales, we conclude that $\mc M_t$ is an $\mc F'$-valued martingale. Taking piecewise linear approximations, it is not difficult to see that for any $\psi_t \in \mc F_T$, 
\[
 \mc Y_t(\psi_t) - \mc Y_0(\psi_0) - \int_0^t \mc Y_s \Big( \frac{\partial}{\partial t} \psi_s + \mc L \psi_s\Big) ds
\]
is a martingale of quadratic variation $2\chi(\rho) \int_0^t \mc E(\psi_s,\psi_s) ds$. This martingale is simply equal to the stochastic integral
\[
 \int_0^t d \mc M_s(\psi_s) ds = \mc M_t(\psi_t) - \int_0^t \mc M_s\Big(\frac{\partial}{\partial t} \psi_s\Big) ds.
\]

For $t=T$ we have
\[
 \mc Y_0(\psi_0) + \int_0^T \mc Y_t \Big(\frac{\partial}{\partial t} \psi_t + \mc L \psi_t \Big) dt = \int_0^T \mc M_t\Big(\frac{\partial}{\partial t} \psi_t \Big) dt,
\]
which proves that $\mc Y_t$ is a generalized solution of (\ref{ec7}).

Before entering into the proof of Theorem \ref{t5}, we state a proposition which is known in the literature as the {\em Boltzmann-Gibbs principle}, which was introduced by Rost \cite{Ros}:

\begin{proposition}[Boltzmann-Gibbs principle]
\label{p4}
 Let $H: \bb R^d \to \bb R$ be a continuous function of compact support. Then for any $t \in [0,T]$, 
 \[
  \lim_{n \to \infty} \bb E^n \Big[ \Big( \int_0^t \frac{1}{n^{d/2}} \sum_{x \in \bb Z^d} \big(g(\eta_s^n(x)) - \varphi(\rho) - \varphi'(\rho)(\eta_s^n(x) -\rho)\big) H(x/n) ds \Big)^2\Big] =0.
 \]
\end{proposition}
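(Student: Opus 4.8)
The plan is to reduce the statement to a bound on the $H_{-1}$ norm of the time-integrand and then to estimate that norm through a one-block decomposition that exploits the algebraic structure of the zero-range equilibrium measures. Write
\[
 W_x(\eta) = g(\eta(x)) - \varphi(\rho) - \varphi'(\rho)\big(\eta(x)-\rho\big), \qquad V_n = \frac{1}{n^{d/2}} \sum_{x \in \bb Z^d} W_x\, H(x/n),
\]
so that the random variable inside the expectation is $\int_0^t V_n(\eta_s^n)\,ds$. Since $\eta_s^n$ is stationary and reversible with respect to $\nu_\rho$ with generator $\mc L_n = n^\alpha L$, the first step is the standard variational inequality for additive functionals of reversible Markov processes: for a universal constant $C$,
\[
 \bb E^n\Big[\Big(\int_0^t V_n(\eta_s^n)\,ds\Big)^2\Big] \leq C\, t\, \|V_n\|_{-1,n}^2, \qquad \|V_n\|_{-1,n}^2 = \sup_f \Big\{ 2\langle V_n, f\rangle_{\nu_\rho} - n^\alpha \mc E(f,f)\Big\},
\]
where $\mc E$ is the Dirichlet form of $L$ and the supremum runs over local functions. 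It thus suffices to prove $\|V_n\|_{-1,n}^2 \to 0$.

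Everything hinges on the fact that $W_x$ is orthogonal in $\mc L^2(\nu_\rho)$ to the conserved field. The product structure of $\nu_\rho$ yields the identity $\int g(\eta)h(\eta)\,d\nu_\rho = \varphi(\rho)\int h(\eta+1)\,d\nu_\rho$; taking $h(\eta)=\eta$ gives $\mathrm{Cov}_{\nu_\rho}\big(g(\eta(x)),\eta(x)\big) = \varphi(\rho) = \varphi'(\rho)\chi(\rho)$, so that $\langle W_x, \eta(x)-\rho\rangle_{\nu_\rho}=0$. Equivalently, the regression coefficient of $g(\eta(x))-\varphi(\rho)$ on $\eta(x)-\rho$ coincides with the thermodynamic derivative $\varphi'(\rho)$; it is this coincidence that will force the slow part of $V_n$ to be negligible.

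To estimate $\|V_n\|_{-1,n}$ I would split $W_x = U_x + R_x$, with $R_x = \bb E[W_x \mid \eta^\ell(x)]$ and $U_x = W_x - R_x$, where $\eta^\ell(x)$ is the empirical density in a box of side $\ell$ about $x$ and $\ell=\ell(n)$ is an intermediate scale with $\ell\to\infty$, $\ell/n\to0$. The fast field $U_x$ is orthogonal to the kernel of the box dynamics, so its inverse under the box generator is controlled by the corresponding spectral gap $\gamma_\ell$; inverting box by box, using $\|U_x\|_0 = O(1)$ and summing the box Dirichlet forms, its contribution to $\|V_n\|_{-1,n}^2$ is of order $\ell^{d+\alpha} n^{-\alpha}$, since $\gamma_\ell$ is of order $\ell^{-\alpha}$, as for the underlying long-jump walk. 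For the slow field $V_n^R = n^{-d/2}\sum_x R_x\,H(x/n)$, equivalence of ensembles gives $R_x = \varphi(\eta^\ell(x)) - \varphi(\rho) - \varphi'(\rho)(\eta^\ell(x)-\rho) + O(\ell^{-d})$; a Taylor expansion, in which the linear term cancels exactly by the previous paragraph, leaves a quadratic remainder, so that $\|V_n^R\|_0 = O(\ell^{-d/2})$. For this slow part it is cheapest to bypass the $H_{-1}$ norm altogether and use the crude stationary bound $\bb E^n[(\int_0^t V_n^R\,ds)^2] \leq t^2\|V_n^R\|_0^2 = O(t^2\ell^{-d})$. Choosing $\ell = \lfloor n^{\alpha/(2(d+\alpha))}\rfloor$ makes both contributions vanish, proving $\|V_n\|_{-1,n}^2 \to 0$.

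The analytic heart, and the step I expect to be the main obstacle, is the fast part: establishing a spectral-gap (or moving-particle) estimate of the correct order $\ell^{-\alpha}$ for the long-jump zero-range generator restricted to a finite box, and assembling the box estimates into a single global Dirichlet-form bound without squandering the gain in $\ell$ through the overlap of boxes. The orthogonality of the second paragraph is precisely what guarantees that, after the slow field is Taylor-expanded, only the harmless quadratic remainder survives; were the linear term present it would contribute at order one and the replacement would fail.
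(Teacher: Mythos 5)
Your argument is essentially correct, but it is not the route the paper takes: the paper does not prove Proposition \ref{p4} directly, deferring to Chapter 11.1 of \cite{KL} (the Brox--Rost/Chang scheme), whose structure is visible in the subdiffusive analogue of Section \ref{s5}. There, one first kills the contribution of block-localized functions of the form $L^{i,0}\tau_{x_i}f$ through the Kipnis--Varadhan bound (Lemma \ref{l9}), and then closes the argument with the purely static estimate $\lim_{l}\inf_f\limsup_n\int(\cdots)^2d\nu_\rho=0$ (Proposition \ref{p6}), which rests on equivalence of ensembles and the ergodicity of the block dynamics on each hyperplane of fixed particle number --- no quantitative spectral gap is invoked. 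You instead split $W_x$ into a slow part $R_x=\bb E[W_x\mid\eta^\ell(x)]$ and a fast part $U_x$, treat the slow part statically (your computation of the cancellation $\mathrm{Cov}(g(\eta(x)),\eta(x))=\varphi(\rho)=\varphi'(\rho)\chi(\rho)$ is exactly right, and the bound $\|V_n^R\|_0=O(\ell^{-d/2})$ survives the $\ell^d$-range correlations of the moving blocks), and control the fast part by inverting the block generator via its spectral gap. This is a legitimate and more quantitative alternative, but two remarks are in order. First, you overestimate the difficulty of the gap: you do not need the sharp order $\ell^{-\alpha}$; since $q$ is bounded below on the unit sphere, the long-jump Dirichlet form dominates a constant times the nearest-neighbour one, so a gap of order $\ell^{-2}$ suffices, and any $\ell=\ell(n)\to\infty$ with $\ell^{d+2}n^{-\alpha}\to0$ works --- this is precisely the ``ellipticity condition'' the paper flags as the easily overlooked ingredient. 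Second, and this is the one genuine gap in your sketch, for the constant-rate interaction $g(n)=\mathbf 1(n>0)$ assumed in the paper the spectral gap of the block dynamics is \emph{not} uniform in the number of particles: it degenerates like $\ell^{-2}(1+k/\ell^d)^{-2}$ (Morris). Your ``inverting box by box'' step therefore produces a density-dependent factor that must be integrated against $\nu_\rho$ together with $\|U_x\|_0^2$; this is harmless here because the marginals of $\nu_\rho$ are geometric and all moments are finite, but it must be said, since it is exactly the place where the argument would break for interaction rates whose invariant measures have heavy tails. The paper's route via Proposition \ref{p6} sidesteps this issue entirely, at the price of a less explicit rate of convergence.
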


The idea behind this proposition is the following. Fluctuations of non conserved quantities are faster than fluctuations of conserved quantities (the number of particles in our case). Therefore, when we look at the right scaling and when averaged over time, only the projection of the fluctuations over the conserved field (the density field in our case) are seen. For the case of the zero-range process with long jumps, the proof of this proposition follows the proof for the diffusive case with slight modifications, so we omit it and we refer to Chapter 11.1 of \cite{KL} for a proof. We just point out an ingredient of the proof that can be easily overlooked, which is the ellipticity condition $\inf \{q(z); |z|=1\}>0$. This condition allows to compare the dynamics with the one associated to the fractional Laplacian $\Delta_\alpha$. We will give a more detailed discussion about the Boltzmann-Gibbs principle in Section \ref{s5}.

\begin{proof}[Proof of Theorem \ref{t5}]
We start by proving $i)$. The main point is that $\mc LG$ is {\em not} in $\mc F$ and by tightness we only know that $\mc Y_t(H)$ is well defined for $H \in \mc F$. Our first step is to prove that the operator $G \mapsto \int_0^t \mc Y_s(\mc L G) ds$ is continuous as a linear functional in $\mc C_{p,0}$ (remember the condition $d/2 < p < (d+\alpha)/2$). The operator $\mc L$ is continuous from $\mc F$ to $\mc C_{p,0}$. But
  \begin{align*}
   \bb E^n\big[ \big(\mc Y_t^n(H)\big)^2\big] 
   	&= \frac{1}{n^d} \sum_{x \in \bb Z^d} H(x/n)^2 \chi(\rho) \leq \frac{\chi(\rho) ||H||_{p,\infty}}{n^d} \sum_{x \in \bb Z^d} (1+(x/n)^2)^{-p} \\
		&\leq C(\rho,p) ||H||_{p,\infty}.
  \end{align*}

  This computation, when applied to the integral $\int_0^t \mc Y_s^n(H) ds$ gives
  \[
   \bb E^n \Big[ \Big( \int_0^t \mc Y_s^n(H) ds \Big)^2\Big] \leq C(\rho,p) t^2 ||H||_{p,\infty}.
  \]

  Therefore, for a sequence $\{H_l\}_l$ of functions in $\mc F$ which is of Cauchy in $\mc C_{p,0}$, the sequence $\{\int_0^t \mc Y_s^n(H) ds\}$ is a Cauchy sequence in $\mc L^2(\bb P^n)$, {\em uniformly} in $n$. We conclude that the random functional $\int_0^t \mc Y_s^n(H) ds$ extends to a random, continuous functional in $\mc C_{p,0}$. Up to here, we have proved that the family of random variables $\{\int_0^t \mc Y_s(\mc L G) ds; G \in \mc F\}$ is linear and continuous. By Lemma \ref{l2.1}, there is a unique $\mc F'$-valued random functional associated to this family, which proves $i)$. 
  
  Now we continue proving $ii)$. Using the Boltzmann-Gibbs principle into (\ref{ec8}), we see that
  \[
   \mc Y_t^n(G) =\mc Y_0^n(G) +\varphi'(\rho) \int_0^t \mc Y_s^n( \mc L G) ds + \mc M_t^n(G)
  \]
plus a rest that vanishes in $\mc L^2(\bb P^n)$ as $n \to \infty$. Remember that our proof of tightness actually proved that all the terms above are tight. Taking further subsequences if necessary, we obtain that
\[
 \mc Y_t(G) = \mc Y_0(G) + \varphi'(\rho)\int_0^t \mc Y_s(\mc LG) ds + \mc M_t(G)
\]
for some $\mc F'$-valued process $\mc M_t$. Notice the convergence of the integral term does not follows directly from the convergence of $\mc Y_\cdot^n$, but from $i)$. The convergence in distribution of the $\mc F'$-valued processes $\{\mc M_\cdot^n\}_n$ does not allow to conclude that $\mc M_\cdot$ is an $\mc F'$-valued martingale. Notice the simple bound $\sup_n \bb E^n[\mc M_T^n(G)^2]<+\infty$ for any $G \in \mc F$. We claim that this bound is enough to conclude that $\mc M_\cdot$ is a martingale. In fact, from this bound we conclude that the sequence $\{\mc M_T^n(G)\}_n$ has a subsequence converging to $\mc M_t(G)$ with respect to the weak topology of $\mc L^2(\bb P^n)$. Therefore, for a measurable set $U$ with respect to the canonical $\sigma$-algebra $\mathfrak F_t$, $\bb E^n[\mc M_t^n(G)\mathbf 1_U]$ converges to $E[\mc M_t(G) \mathbf 1_U]$. Since $\mc M_\cdot^n(G)$ is a martingale, $\bb E^n[\mc M_T^n(G) \mathbf 1_U] = \bb E^n[\mc M_t^n(G) \mathbf 1_U]$. And taking a further subsequence if necessary, this last term converges to $E[\mc M_t(G) \mathbf 1_U]$, which proves that $\mc M_\cdot(G)$ is a martingale for any $G \in \mc F$.

In order to finish the proof, we only need to obtain the quadratic variation of $\mc M_t(G)$. A simple application of Tchebyshev's inequality proves that $\<\mc M_t^n(G)\>$ converges in probability to $2t \varphi(\rho) \mc E(G,G)$ as $n \to \infty$. Remember the definition of quadratic variation. We need to prove that $\mc M_t(G)^2 - 2\varphi(\rho)t\mc E(G,G)$ is a martingale. The same argument we used above applies now if we can show that $\sup_n \bb E^n[\mc M_T^n(G)^4]<+\infty$ and $\sup_n \bb E^n[\<\mc M_T^n(G)\>^2]<+\infty$. Both bounds follows easily from the explicit form of $\<\mc M_t^n(G)\>$ and (\ref{ec8}).
  \end{proof}

\section{Density fluctuations: the subdiffusive case}
\label{s5}
In this Section we prove Theorem \ref{t3}. The scheme of the proof is the same we followed for the superdiffusive case. That is, we first prove that the fluctuation field $\{\mc Y_\cdot^{n,W}\}_n$ is tight and then we prove that any limit point of $\{\mc Y_\cdot^{n,W}\}_n$ is a solution of the generalized Ornstein-Uhlenbeck equation (\ref{ec4.5}). With respect to the superdiffusive case, we point out two differences. First, the operator $\mc L_W$ is continuous in the {\em ad-hoc} nuclear space $\mc F$, and therefore the conditions required for uniqueness of solutions of (\ref{ec4.5}) are simpler. In the other hand, the environment is {\em degenerated}, in the sense that the trajectory $W$ has a dense set of jumps and therefore there are rates $\xi_x^n$ arbitrarily small at any scale and any macroscopic region. This fact makes the system less ergodic, and therefore the proof of the Boltzmann-Gibbs principle will be more involved.

From now on we take a fixed realization of the environment $W$, so we will omit in the notation the dependence in $W$ of different quantities we will define. In order to profit from the concepts already introduced in the superdiffusive case, we denote by the same symbols analogous objects, like the generator $\mc L = d/dx d/dW$, the rescaled process $\eta_t^n$, etc. The reader should use the corresponding subdiffusive definitions of the various quantities of interest, instead of the ones used in the previous section.

\subsection{Tightness of $\{\mc Y_\cdot^n\}_n$}

Before entering into the proofs, we need some definitions. For each $n>0$ and each function $G: \bb R \to \bb R$ we define
\[
 \mc L_n G(x/n) = n \Bigg\{ \frac{G\big((x+1)/n\big) - G\big(x/n\big)}{W\big((x+1)/n\big) - W\big(x/n\big)} - 
 	\frac{G\big((x/n\big) - G\big((x-1)/n\big)}{W\big(x/n\big) - W\big((x-1)/n\big)} \Bigg\}.
\]

The operator $\mc L_n$ is a discrete approximation of $\mc L = d/dx d/dW$ and corresponds to the generator of the process $n^{-1} x_n(t n^{1+1/\alpha})$, where $x_n(t)$ is the random walk associated to $p_n$. It will be useful as well to define the {\em energy form}
\[
 \mc E_n(G,G) = \sum_{x \in \bb Z} \Bigg\{ \frac{G\big((x+1)/n\big) - G\big(x/n\big)}{W\big((x+1)/n\big) - W\big(x/n\big)} \Bigg\}^2 \Big( W\big((x+1)/n\big) - W\big(x/n\big) \Big).
\]

In the definition of $\mc E_n(G,G)$ we have not cancelled out the differences $W((x+1)/n)-W(x/n)$ in order to stress that $\mc E_n(G,G)$ is nothing but a Stieltjes sum for $\int (dG/dW)^2 dW$. In particular, for $G \in \mc F$ the energy $\mc E_n(G,G)$ converges to $\mc E(G,G) = \int (dG/dW)^2 dW$ as $n \to \infty$.

The proof of tightness for $\{\mc Y_\cdot^n\}_n$ is very similar to the proof in Section \ref{s4.1}. By Mitoma's criterion, it is enough to prove tightness for $\{\mc Y_\cdot^n(G)\}_n$ for any $G \in \mc F$. By Dynkin's formula,
\begin{equation}
\label{ec9}
 \mc Y_t^n(G) = \mc Y_0^n(G) + \int_0^t \frac{1}{n^{1/2}} \sum_{x \in \bb Z} \big(g(\eta_s^n(x))-\varphi(\rho)\big) \mc L_n G(x/n) ds + \mc M_t^n(G),
\end{equation}
where $\mc M_t^n(G)$ is a martingale of quadratic variation
\[
 \<\mc M_t^n(G) \> = \int_0^t \sum_{x \in \bb Z} \big(g(\eta_s^n(x))+g(\eta_s^n(x+1))\big)
 	\frac{\big(G((x+1)/n) - G(x/n)\big)^2}{W((x+1)/n) - W(x/n)} ds.
\]

Tightness of the martingales $\{\mc M_\cdot^n(G)\}_n$ follows from the deterministic bound $\<\mc M_t^n(G) \> \leq 2t \mc E_n(G,G)$. 
Tightness of the integral term follows after approximating $\mc L_n G(x/n)$ by $\mc L G(x/n)$ and using that $\mc L G$ is regular and square integrable. 
Since $\mc Y_0^n$ converges to a white noise of covariance matrix $\chi(\rho) \delta(y-x)$, we conclude that $\{\mc Y_\cdot^n\}_n$ is tight. 

\subsection{An estimate of energy type}

In this section we will assume the Boltzmann-Gibbs principle. We will prove that the limit points of $\{\mc Y_\cdot^n\}_n$ are thus concentrated on solutions of (\ref{ec4.5}). A key {\em energy estimate} (see Theorem \ref{t6}) will be the main tool allowing to prove the characterization of limit points of $\{\mc Y_\cdot^n\}_n$, explaining the name of the section. 

In the statement of the Boltzmann-Gibbs principle (Proposition \ref{p4}), we ask the test function $H$ to be continuous and of compact support. This point is crucial in the proof of Proposition \ref{p4}, since it allows to perform a space average that in the end reduces the proof to a version of the ergodic theorem. First of all, it is not clear if there are functions of compact support in $\mc F$. Second, they are definitively not continuous, since they are differentiable with respect to $W$, although they are c\`adl\`ag, leaving some margin for development. Moreover, a simple computation shows that the convergence of $\mc L_n G$ to $\mc L G$ holds in a weak sense, enough to establish the $\mc L^2$ estimates needed to prove tightness, but not enough to ensure some sort of continuity for $\mc L_n G$. The key fact here is that for any test function $G \in \mc F$, the derivative $dG/dW$ is a smooth function, since we have to take its usual derivative to compute $\mc L G$. In order to benefit from this continuity, we rewrite the decomposition (\ref{ec9}) as
\[
 \mc Y_t^n(G) = \mc Y_0^n(G) + \int_0^t \frac{1}{n^{1/2}} \sum_{x \in \bb Z} n\big(g(\eta_s^n(x))-g(\eta_s^n(x+1))\big) \nabla_x^n G ds + \mc M_t^n(G),
\]
where we have defined
\[
 \nabla_x^n G = \frac{G\big((x+1)/n\big) -G\big(x/n\big)}{W\big((x+1)/n\big) -W\big(x/n\big)}.
\]

In other words, $\nabla_x^n G$ is a discrete approximation of $dG/dW$. At this point, it is not clear at all that rewriting the integral term in this form is of some help, because now we have a big factor $n$ in front of $g(\eta_s^n(x))-g(\eta_s^n(x+1))$, and this last difference is not small, though it should be small after time integration. Denote by $\<\cdot,\cdot\>_\rho$ the inner product in $\mc L^2(\nu_\rho)$. We start recalling Kipnis-Varadhan inequality, valid for any reversible Markov process.

\begin{proposition}
 \label{p5}
 For any function $h: \Omega \to \bb R$ in $\mc L^2(\bb \nu_\rho)$ with $\int h d
 \nu_\rho =0$, 
 \[
  \bb E^n \Big[ \Big(\int_0^t h(\eta_t^n) ds \Big)^2 \Big] \leq 20t ||h||_{-1,n}^2
 \]
where the Sobolev-type norm $||\cdot||_{-1,n}$ is defined by
\[
 ||h||_{-1,n}^2 = \sup_f \big\{2\<f,h\> - \<f,-L_n f\>_\rho\big\},
\]
$L_n$ is the generator of the process $\eta_t^n$ and the supremum is over functions $f$ in $\mc L^2(\nu_\rho)$. 
\end{proposition}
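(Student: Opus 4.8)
The plan is to exploit the reversibility of $\eta_t^n$ and reduce the variance estimate to a one–dimensional computation via the spectral theorem. Since $\nu_\rho$ is reversible, the generator $L_n$ is a self-adjoint, non-positive operator on $\mc L^2(\nu_\rho)$, so we may write $-L_n = \int_0^\infty \lambda\, dE_\lambda$ for a spectral resolution $\{E_\lambda\}$, and the semigroup $S_u = e^{uL_n}$ satisfies $\<g, S_u g\>_\rho = \int_0^\infty e^{-\lambda u}\, d\<g, E_\lambda g\>_\rho$ for every $g \in \mc L^2(\nu_\rho)$ and $u \ge 0$. This representation is what turns the whole estimate into an elementary scalar inequality.

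First I would expand the square and use stationarity together with reversibility. Starting from $\nu_\rho$, the two–point function is $\bb E^n[h(\eta_s^n)h(\eta_r^n)] = \<h, S_{|s-r|}h\>_\rho$, so integrating over the triangle $\{0 \le r \le s \le t\}$ and using Fubini gives
\[
\bb E^n\Big[\Big(\int_0^t h(\eta_s^n)\,ds\Big)^2\Big] = 2\int_0^t (t-u)\,\<h, S_u h\>_\rho\, du.
\]
Inserting the spectral representation of $S_u$ and exchanging the order of integration, which is licit because the integrand is non-negative, yields
\[
\bb E^n\Big[\Big(\int_0^t h(\eta_s^n)\,ds\Big)^2\Big] = 2\int_0^\infty \Big(\int_0^t (t-u)e^{-\lambda u}\,du\Big)\, d\<h, E_\lambda h\>_\rho.
\]
A direct computation gives $\int_0^t (t-u)e^{-\lambda u}\,du = (\lambda t - 1 + e^{-\lambda t})/\lambda^2 \le t/\lambda$, the inequality holding since $e^{-\lambda t} - 1 \le 0$.

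Finally I would identify the spectral integral with the negative Sobolev norm. Completing the square in the variational formula, whose formal maximiser solves $-L_n f = h$, gives
\[
\|h\|_{-1,n}^2 = \sup_f\{2\<f,h\>_\rho - \<f, -L_n f\>_\rho\} = \int_0^\infty \frac{1}{\lambda}\, d\<h, E_\lambda h\>_\rho,
\]
with the convention that both sides are $+\infty$ when $h$ does not lie in the range of $(-L_n)^{1/2}$; in that degenerate case the claimed inequality is trivial. Combining the last three displays produces the bound with constant $2$, hence a fortiori with the stated $20$.

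The genuinely delicate point is this spectral/variational identification rather than the scalar integral estimate: one must justify that $\sup_f\{2\<f,h\>_\rho - \<f,-L_nf\>_\rho\}$ really equals $\int_0^\infty \lambda^{-1}\,d\<h,E_\lambda h\>_\rho$, handling with care both the case $h \notin H_{-1}$ and the fact that $L_n$ is unbounded, so the supremum should be restricted to a core of $L_n$. An alternative route that sidesteps the full spectral theorem is the original resolvent argument of Kipnis and Varadhan: for $\lambda>0$ solve $\lambda u_\lambda - L_n u_\lambda = h$, apply Dynkin's formula to bound $\int_0^t h\,ds$ by $u_\lambda(\eta_t^n)-u_\lambda(\eta_0^n)$ plus a martingale whose quadratic variation is the Dirichlet form of $u_\lambda$, and then optimise in $\lambda$. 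This approach is more robust and avoids invoking $\{E_\lambda\}$ directly, at the cost of a worse but still admissible constant, comfortably below $20$.
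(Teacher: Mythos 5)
Your proof is correct. Note first that the paper itself offers no proof of this statement: by the author's own convention, results labelled ``Proposition'' are quoted from the literature, and this one is the classical Kipnis--Varadhan inequality for reversible Markov processes. Your spectral-theorem argument is the standard proof in the reversible setting, and every step checks out: the identity $\bb E^n[(\int_0^t h(\eta_s^n)\,ds)^2]=2\int_0^t(t-u)\<h,S_uh\>_\rho\,du$ follows from stationarity and Fubini, the scalar bound $(\lambda t-1+e^{-\lambda t})/\lambda^2\le t/\lambda$ is elementary, and the identification of the variational norm with $\int_0^\infty\lambda^{-1}\,d\<h,E_\lambda h\>_\rho$ (with the convention $+\infty$ off the range of $(-L_n)^{1/2}$, where the claim is vacuous) is the standard completion of the square. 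You correctly flag the only genuine technical point, namely that the supremum must be read over a core of the Dirichlet form since $\<f,-L_nf\>_\rho$ is not defined for all of $\mc L^2(\nu_\rho)$. Your argument in fact yields the sharper constant $2$; the constant $20$ in the statement is the one inherited from the general (not necessarily reversible) Kipnis--Varadhan maximal estimate for $\sup_{s\le t}|\int_0^s h(\eta_r^n)\,dr|$, so the stated bound follows a fortiori. The alternative resolvent route you sketch is exactly how the general version is proved and is the one implicitly invoked by the paper's citation.
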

 
Of course, in this proposition nothing prevents us from having $||h||_{-1,n}=+\infty$, but estimating $||h||_{-1,n}$ is usually part of the work when applying this inequality. 

\begin{theorem}
\label{t6}
 For any function $H: \bb R \to \bb R$,
 \begin{multline*}
  \bb E^n \Big[ \Big( \int_0^t \frac{1}{n^{1/2}} \sum_{x \in \bb Z} n \big(g(\eta_s^n(x)) - g(\eta_s^n(x+1)) \big) H(x/n) ds \Big)^2 \Big] \\
  	\leq 5t \varphi(\rho)^{-1} \sum_{x \in \bb Z} H(x/n)^2 \big(W((x+1)/n) -W(x/n)\big).
 \end{multline*}

\end{theorem}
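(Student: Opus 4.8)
The plan is to recognize the quantity inside the expectation as a time integral of a mean-zero observable and to apply the Kipnis--Varadhan inequality (Proposition \ref{p5}), reducing the whole statement to an estimate of a single variational norm. Set
\[
 h(\eta) = \frac{1}{n^{1/2}}\sum_{x\in\bb Z} n\big(g(\eta(x))-g(\eta(x+1))\big)H(x/n).
\]
Since $\int g(\eta(x))\,d\nu_\rho=\varphi(\rho)$ does not depend on $x$, each bond term has zero $\nu_\rho$-mean, so $\int h\,d\nu_\rho=0$; when the right-hand side of the theorem is infinite there is nothing to prove, so I may assume $\sum_x H(x/n)^2\big(W((x+1)/n)-W(x/n)\big)<\infty$, which also places $h$ in $\mc L^2(\nu_\rho)$. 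Proposition \ref{p5} then bounds the left-hand side by $20t\,\|h\|_{-1,n}^2$ with $\|h\|_{-1,n}^2=\sup_f\{2\langle f,h\rangle_\rho-\langle f,-L_n f\rangle_\rho\}$, and the task becomes to bound this supremum.

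The heart of the argument is an integration by parts in configuration space. I would use the elementary identity for the zero-range invariant measure
\[
 \int g(\eta(x))F(\eta)\,d\nu_\rho=\varphi(\rho)\int F(\eta+\delta_x)\,d\nu_\rho,
\]
valid for any local $F$, where $\eta+\delta_x$ denotes the configuration with one extra particle at $x$. Applying it to both $g(\eta(x))$ and $g(\eta(x+1))$, and noting $\eta+\delta_{x+1}=(\eta+\delta_x)^{x,x+1}$, the factors of $\varphi(\rho)$ cancel and one obtains the clean relation
\[
 \int f(\eta)\big(g(\eta(x))-g(\eta(x+1))\big)\,d\nu_\rho=-\int g(\eta(x))\big(f(\eta^{x,x+1})-f(\eta)\big)\,d\nu_\rho.
\]
Summing against $H(x/n)$ gives
\[
 2\langle f,h\rangle_\rho=-2n^{1/2}\sum_{x\in\bb Z}H(x/n)\int g(\eta(x))\big(f(\eta^{x,x+1})-f(\eta)\big)\,d\nu_\rho.
\]

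Next I would recall that, by reversibility of $\nu_\rho$, the Dirichlet form of $L_n$ equals
\[
 \langle f,-L_n f\rangle_\rho=\sum_{x\in\bb Z}\frac{n}{W((x+1)/n)-W(x/n)}\int g(\eta(x))\big(f(\eta^{x,x+1})-f(\eta)\big)^2\,d\nu_\rho,
\]
and apply Young's inequality $2ab\le a^2+b^2$ bond by bond, distributing the weight $W((x+1)/n)-W(x/n)$ so that the quadratic term at $x$ matches exactly the $x$-th summand of this Dirichlet form. The leftover term at $x$ integrates to $\varphi(\rho)H(x/n)^2\big(W((x+1)/n)-W(x/n)\big)$, whence
\[
 2\langle f,h\rangle_\rho-\langle f,-L_n f\rangle_\rho\le \varphi(\rho)\sum_{x\in\bb Z}H(x/n)^2\big(W((x+1)/n)-W(x/n)\big)
\]
uniformly in $f$. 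Taking the supremum and feeding the resulting bound on $\|h\|_{-1,n}^2$ into Proposition \ref{p5} gives the assertion, up to the universal constant supplied by that proposition.

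The main obstacle is the configuration-space integration by parts and the precise calibration of Young's inequality against the Dirichlet form: this is exactly the place where the reversibility of the zero-range dynamics is used in full, and getting the weights to reconstruct the Dirichlet form without loss is what makes the estimate sharp. I emphasize that, unlike the Boltzmann--Gibbs principle, the argument uses \emph{no} regularity of $H$ whatsoever---only square-summability against the increments of $W$---which is precisely what makes this energy estimate robust enough to survive the degenerate, everywhere-discontinuous environment $W$.
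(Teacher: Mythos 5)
Your proposal is correct and follows essentially the same route as the paper: Kipnis--Varadhan (Proposition \ref{p5}), the variational formula for $||h||_{-1,n}$, the explicit Dirichlet form of $L_n$, and a weighted Young/Cauchy--Schwarz inequality calibrated to absorb the Dirichlet form, with the configuration-space integration by parts (left implicit in the paper) made explicit via the identity $\int g(\eta(x))F\,d\nu_\rho=\varphi(\rho)\int F(\eta+\delta_x)\,d\nu_\rho$. The only difference is the constant: your choice of weights yields $20t\,\varphi(\rho)$ where the paper's optimized $\beta_x$ gives $5t\,\varphi(\rho)$ (the $\varphi(\rho)^{-1}$ in the statement is evidently a typo for $\varphi(\rho)$, as both proofs produce the factor $\int g\,d\nu_\rho=\varphi(\rho)$ multiplicatively), and this is immaterial for every application of the estimate.
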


\begin{proof}
Let us call $h(\eta_s^n)$ the term inside the integral. By Proposition \ref{p5}, the expectation is bounded by $20 t ||h||_{-1,n}^2$. Looking at the variational formula for $ ||h||_{-1,n}^2$, it will be good to obtain a more explicit formula for $\<f, -L_n f\>_\rho$. After some changes of variables and some algebra, it is not difficult to see that
\[
\<f, -L_n f\>_\rho = \sum_{x \in \bb Z} \frac{n}{W\big(\frac{x+1}{n}\big) -W\big(\frac{x}{n}\big)} \int g\big(\eta(x)\big) \big[f(\eta^{x,x+1})-f(\eta)\big]^2 \nu_\rho(d\eta).
\]

Use now the weighted Cauchy-Schwartz inequality $ab \leq a^2 \beta/2 +b^2/2 \beta$ with $a = \sqrt{g(\eta(x))}$ and $b = \sqrt{g(\eta(x))}[f(\eta^{x,x+1})-f(\eta)]$ to get
\begin{align*}
\int \big( g(\eta(x+1))-g(\eta(x))\big) f(\eta)d \nu_\rho
	&\leq \frac{1}{2\beta_x} \int g(\eta(x)) \big[f(\eta^{x,x+1})-f(\eta)\big]^2\nu_\rho(d\eta) \\
	&+ \frac{\beta_x}{2} \int g(\eta(x))d \nu_\rho.
\end{align*}

Notice that the last integral is bounded by $\varphi(\rho) \beta_x/2$. The factor we are estimating appears multiplied by $n^{1/2} H(x/n)$ in $\<f^2,h\>_\rho$. Therefore, choosing $\beta_x = |H(x/n)|\big(W((x+1)/n)-W(x/n)\big)/2n^{1/2}$ we obtain the bound
\begin{align*}
||h||_{-1,n}^2
	&\leq \frac{\varphi(\rho)}{2} \sum_{x \in \bb Z} n^{1/2} |H(x/n)| \beta_x\\
	&\leq \frac{\varphi(\rho)}{4} \sum_{x \in \bb Z} H(x/n)^2 \big(W((x+1)/n)-W(x/n)\big),
\end{align*}
which proves the theorem. 
\end{proof}

Remember that our goal is to replace in equation (\ref{ec9}) the term $(g(\eta_s^n(x))-\rho) \mc L_n G(x/n)$ by $\varphi'(\rho)(\eta_s^n(x)-\rho) \mc L G(x/n)$. Let us take now $H = dG/dW$. For $x \in \bb R$ and $\epsilon >0$,
\begin{align*}
G(x+\epsilon) -G(x) 
	&= \int_x^{x+\epsilon} H(y) W(dy) \\
	&= H(x)\big(W(x+\epsilon)-W(x)\big) + \int_x^{x+\epsilon} \big(H(y)-H(x)\big) W(dy).
\end{align*}

In particular,
\begin{equation}
\label{ec10}
\begin{split}
    \bigg|\frac{G(x+\epsilon)-G(x)}{W(x+\epsilon)-W(x)}-H(x)\bigg|
    & \leq \frac{1}{W(x+\epsilon)-W(x)} \int_x^{x+\epsilon} \big|H(y)-H(x)\big|W(dy)  \\
    &  \leq \sup\{\epsilon H'(y); x\leq y\leq x+\epsilon\}.
\end{split}
\end{equation}

From this estimate, we can perform a first replacement in (\ref{ec9}).

\begin{lemma}
\label{l7}
\[
\lim_{n \to \infty} \bb E^n\Big[ \Big( \int_à^t \frac{1}{n^{1/2}} \sum_{x \in \bb Z} n\big(g(\eta_s^n(x)) -g(\eta_s^n(x+1))\big)\big(\nabla_x^nG-H(x/n)\big)ds\Big)^2\Big] =0.
\]
\end{lemma}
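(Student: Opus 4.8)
The plan is to recognize that the expression inside the expectation is exactly of the type controlled by the energy estimate of Theorem \ref{t6}, the only difference being that the weight attached to $g(\eta_s^n(x))-g(\eta_s^n(x+1))$ at the bond $\<x,x+1\>$ is now $\nabla_x^n G - H(x/n)$ rather than a value $H(x/n)$ coming from a fixed function. The first observation I would make is that the proof of Theorem \ref{t6} never uses that $H(x/n)$ is the evaluation of a function: it only manipulates the numbers $H(x/n)$ through the choice $\beta_x = |H(x/n)|\big(W((x+1)/n)-W(x/n)\big)/2n^{1/2}$ in the weighted Cauchy-Schwarz step. Hence the estimate holds verbatim for an arbitrary site-indexed sequence $\{a_x\}_{x\in\bb Z}$, giving
\[
 \bb E^n\Big[\Big(\int_0^t \frac{1}{n^{1/2}}\sum_{x\in\bb Z} n\big(g(\eta_s^n(x))-g(\eta_s^n(x+1))\big)a_x\,ds\Big)^2\Big] \leq 5t\varphi(\rho)^{-1}\sum_{x\in\bb Z}a_x^2\big(W((x+1)/n)-W(x/n)\big).
\]
Applying this with $a_x = \nabla_x^n G - H(x/n)$ reduces the lemma to showing that the deterministic right-hand side vanishes as $n\to\infty$.

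Next I would control the remaining deterministic sum using the pointwise estimate (\ref{ec10}), which I apply at the base point $x/n$ with increment $\epsilon = 1/n$. Recalling $H = dG/dW$, this yields
\[
 \big|\nabla_x^n G - H(x/n)\big| \leq \frac{1}{n}\sup\big\{|H'(y)|;\ x/n\leq y\leq (x+1)/n\big\},
\]
so that $\big(\nabla_x^n G - H(x/n)\big)^2 \leq n^{-2}\sup_{[x/n,(x+1)/n]}(H')^2$. Substituting, the right-hand side of the energy bound is at most
\[
 \frac{5t\varphi(\rho)^{-1}}{n^2}\sum_{x\in\bb Z}\Big(\sup_{[x/n,(x+1)/n]}(H')^2\Big)\big(W((x+1)/n)-W(x/n)\big).
\]

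The sum appearing here is a Riemann–Stieltjes upper sum for $\int (H'(u))^2\,dW(u)$. Since $G \in \mc F$, its $W$-derivative $H = dG/dW$ is smooth with $H' = \mc L G \in \mc L^2(\bb R)$, and the decay encoded in the $\mc F$-norms of $G$ makes this Stieltjes sum bounded uniformly in $n$; consequently the whole expression is $O(1/n^2)$ and the lemma follows. The argument is thus routine once the two inputs — the extension of the energy estimate to arbitrary sequences and the pointwise bound (\ref{ec10}) — are in place. The one delicate point I expect is this last step: one must genuinely verify that $\sum_x \sup_{[x/n,(x+1)/n]}(H')^2\,(W((x+1)/n)-W(x/n))$ stays bounded in $n$, i.e. that $\int (H')^2\,dW<\infty$ and that the Stieltjes sums converge to it. This is precisely where the regularity and decay of test functions in $\mc F$, rather than of $G$ alone, are used.
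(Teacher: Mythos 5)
Your first two steps coincide with the paper's: Theorem \ref{t6} is indeed applied with the site-dependent weight $a_x=\nabla_x^n G-H(x/n)$ (its proof only uses the numbers $H(x/n)$ through the choice of $\beta_x$, so this is legitimate), and (\ref{ec10}) with $\epsilon=1/n$ produces the factor $n^{-2}$. The gap is in your last step. You need the upper Stieltjes sums
\[
\sum_{x\in\bb Z}\Big(\sup_{[x/n,(x+1)/n]}(H')^2\Big)\big(W((x+1)/n)-W(x/n)\big)
\]
to be bounded uniformly in $n$, i.e.\ essentially $\int (H')^2\,dW=\int(\mc L_W G)^2\,dW<+\infty$. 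This is \emph{not} ``where the decay encoded in the $\mc F$-norms is used'': the norms $||G||_n$ control $\int(\mc L_W G)^2\,dx$, $\int(dG/dW)^2\,dW$ and $\int|W|(dG/dW)^2\,dW$ (see the computation of $\<\mc A^2\varphi,\varphi\>$ in Lemma \ref{l6}), but nothing of the form $\int(\mc L_W G)^2\,dW$ appears there, and square-integrability with respect to $dx$ (even with polynomial-in-$W$ weights) does not imply square-integrability with respect to the pure-jump measure $dW$: a function with small spikes sitting on large jumps of $W$ can have finite energy and finite $\mc L^2(|W|dx)$-norm while $\int\psi^2\,dW=+\infty$. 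So the uniform boundedness you invoke is a genuine unproved claim, and your $O(1/n^2)$ conclusion rests entirely on it.

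The paper avoids this by splitting the sum. On $\{|x|\le Mn\}$ your computation works verbatim, since there one only needs the crude local bound $\sup_{[-M,M]}(H')^2\,\big(W(M)-W(-M)\big)<+\infty$, which holds because $H'=\mc L_W G$ is c\`adl\`ag, hence locally bounded; this part is $O(C(M)/n^2)$. On the tails $\{|x|>Mn\}$ one does \emph{not} use (\ref{ec10}); instead one bounds $\big(\nabla_x^nG-H(x/n)\big)^2\le 2\big(\nabla_x^nG\big)^2+2H(x/n)^2$ and notes that $\sum_{|x|>Mn}\big[(\nabla_x^nG)^2+H(x/n)^2\big]\Delta_x^nW$ is a tail of Stieltjes sums for $\int(dG/dW)^2dW=\mc E(G,G)$, which is $O\big(1/\min\{W(M),-W(-M)\}\big)$ uniformly in $n$ because $\int|W|(dG/dW)^2dW<+\infty$ \emph{is} controlled by $||G||_2$. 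Sending $n\to\infty$ and then $M\to\infty$ closes the argument. You should adopt this splitting, or else supply a proof that $\int(\mc L_W G)^2\,dW<+\infty$ for every $G\in\mc F$; as written, your treatment of the tails is unjustified.
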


\begin{proof}
Take $M>0$ and restrict the sum above to $\{|x|\leq Mn\}$. Then the limit holds by Theorem \ref{t6} and the fact that $H'$ is right-continuous. In the set $\{|x|>Mn\}$, bound $|\nabla_x^nG-H(x/n)|$ by $|\nabla_x^nG|+|H(x/n)|$ and use Theorem \ref{t6} again to prove that the expectation goes to $0$ as $M \to \infty$, {\em uniformly} in $n$. 
\end{proof}

In particular, we have the decomposition
\[
\mc Y_t^n(G) = \mc Y_0^n(G) + \int_0^t \frac{1}{n^{1/2}} \sum_{x \in \bb Z} n\big(g(\eta_s^n(x)) -g(\eta_s^n(x+1))\big)H(x/n)ds + \mc M_t^n(G),
\]
plus a rest that vanishes in $\mc L^2(\bb P^n)$ as $n \to \infty$. We have done all this work only to justify the exchange of $\nabla_x^n G$ by $H$. Now we can perform the integration by parts to write the integral as
\[
\int_0^t \frac{1}{n^{1/2}} \sum_{x \in \bb Z} \big(g(\eta_s^n(x)-\varphi(\rho)\big)n\big(H((x+1)/n)-H(x/n)\big)ds.
\]

Now we have made appear a finite approximation of $H'(x/n) = \mc L G(x/n)$. Since $\mc L G$ is uniformly $W$-H\"older continuous, using the same splitting into a box of size $Mn$ used above, we can write the integral as
\[
\int_0^t \frac{1}{n^{1/2}} \sum_{x \in \bb Z} \big(g(\eta_s^n(x)-\varphi(\rho)\big)\mc L G(x/n)ds
\]
plus a rest  that vanishes in $\mc L^2(\bb P^n)$ as $n \to \infty$. Now, finally, we have put the c\`adl\`ag test function $\mc L G$ in place of $\mc L_n G$. For the sake of completeness, we repeat here the statement of the Boltzmann-Gibbs principle. We will give an outline of the proof in the next section.

\begin{theorem}
\label{t7}
For any continuous function of compact support $H: \bb R \to \bb R$ we have
 \[
  \lim_{n \to \infty} \bb E^n \Big[ \Big( \int_0^t \frac{1}{n^{d/2}} \sum_{x \in \bb Z^d} \big(g(\eta_s^n(x)) - \varphi(\rho) - \varphi'(\rho)(\eta_s^n(x) -\rho)\big) H(x/n) ds \Big)^2\Big] =0.
 \]
\end{theorem}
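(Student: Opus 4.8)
The natural route is to combine the Kipnis--Varadhan bound of Proposition \ref{p5} with a multiscale (one-block) replacement, concentrating all the difficulty in the degeneracy of the rates. Throughout we are in dimension $d=1$. Write $V_x(\eta) = g(\eta(x)) - \varphi(\rho) - \varphi'(\rho)\big(\eta(x)-\rho\big)$ and $h = \frac{1}{\sqrt n}\sum_{x\in\bb Z} V_x\, H(x/n)$. Since $\int V_0\, d\nu_\rho = 0$, Proposition \ref{p5} bounds the expectation in the statement by $20t\,||h||_{-1,n}^2$, so the whole theorem reduces to showing $\lim_{n\to\infty} ||h||_{-1,n}^2 = 0$. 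The point that makes this possible is the \emph{structural} role of the subtracted term: not only does $V_0$ have zero mean, but the subtraction of $\varphi'(\rho)(\eta(x)-\rho)$ renders $V_0$ orthogonal in $\mc L^2(\nu_\rho)$ to the conserved field, so that its projection onto the density decays one order faster than a generic fluctuation.

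To estimate $||h||_{-1,n}^2$ I would split $V_x = (V_x-\tilde V_{x,\ell}) + \tilde V_{x,\ell}$, where $\tilde V_{x,\ell} = E_{\nu_\rho}[V_x\mid \eta^\ell(x)]$ is the conditional expectation given the empirical density $\eta^\ell(x)$ in a box of $\ell$ sites around $x$, and accordingly $h = h_1 + h_2$. The block-average part $h_2 = \frac{1}{\sqrt n}\sum_x \tilde V_{x,\ell}\,H(x/n)$ is a function of the slowly varying density field, whose time integral can be bounded directly by a static second-moment computation (Cauchy--Schwarz in time), treated in the next step. For the fluctuating part $h_1 = \frac{1}{\sqrt n}\sum_x (V_x-\tilde V_{x,\ell})\,H(x/n)$ I would keep the Kipnis--Varadhan bound and estimate $||h_1||_{-1,n}^2$ through the variational formula $||\cdot||_{-1,n}^2 = \sup_f\{2\<f,\cdot\>_\rho - \<f,-L_n f\>_\rho\}$: using the explicit form of $\<f,-L_n f\>_\rho$ recorded before Theorem \ref{t6}, the replacement cost is controlled by the Dirichlet form of $f$ restricted to each block, weighted by the local rates $n/\big(W((x+1)/n)-W(x/n)\big)$, through the spectral gap of the zero-range dynamics on a block. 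This is the usual one-block estimate, and it is exactly here that the rates enter.

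For the block-average part I would invoke the equivalence of ensembles. The grand-canonical identity $E_{\bar\nu_\varphi}[g(\eta(0))]=\varphi$ gives $E_{\nu_\rho}[g(\eta(x))\mid \eta^\ell(x)] = \varphi\big(\eta^\ell(x)\big) + O(1/\ell)$, the fugacity associated to the block density, so that $\tilde V_{x,\ell} = \varphi\big(\eta^\ell(x)\big) - \varphi(\rho) - \varphi'(\rho)\big(\eta^\ell(x)-\rho\big) + O(1/\ell)$, which is $O\big((\eta^\ell(x)-\rho)^2\big) + O(1/\ell)$ by a Taylor expansion. The first-order cancellation is precisely what the subtraction of $\varphi'(\rho)(\eta(x)-\rho)$ buys us. Squaring $h_2$ and using the product structure of $\nu_\rho$, namely $E_{\nu_\rho}[\tilde V_{x,\ell}]=0$ so that covariances of disjoint boxes vanish while $E_{\nu_\rho}[\tilde V_{x,\ell}^2]=O(1/\ell^2)$ and only $O(\ell)$ neighbouring boxes overlap, one finds $\sup_s\bb E^n[h_2(\eta_s^n)^2] \le C\,||H||^2/\ell$, hence $\bb E^n[(\int_0^t h_2\,ds)^2]\le C(t,H)/\ell$. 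Taking first $n\to\infty$ (to kill the one-block error in $h_1$) and then $\ell\to\infty$ closes the argument.

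The main obstacle is the degeneracy of the environment in the one-block step. For uniformly elliptic rates the spectral gap of the zero-range process on a block of $\ell$ sites is of order $\ell^{-2}$ uniformly in the density, and the replacement cost for $h_1$ is harmless. Here, however, a single bond carrying a large increment $W((x+1)/n)-W(x/n)$, equivalently a very small rate $\xi_x^n$, can destroy the gap of any fixed block, so the one-block estimate is \emph{not} uniform in the realization of $W$. The remedy I would pursue is to let the environment dictate the geometry: partition $\bb Z$ by cutting at the sites carrying the largest increments of $W$, so that within each resulting block the rates are comparable and a gap estimate is available, and then control the coupling across the sparse set of slow bonds by the energy estimate of Theorem \ref{t6}, which is a purely reversibility-based bound and hence robust to the degeneracy. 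Making the environment-adapted block geometry compatible with the equivalence of ensembles, and showing that the slow bonds contribute negligibly at every scale, is the technical heart of the subdiffusive case.
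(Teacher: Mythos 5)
Your skeleton --- Kipnis--Varadhan plus the one-block decomposition $V_x = (V_x - \tilde V_{x,\ell}) + \tilde V_{x,\ell}$, with the block average handled by equivalence of ensembles and the Taylor cancellation bought by subtracting $\varphi'(\rho)(\eta(x)-\rho)$ --- is the classical scheme and matches the paper's overall structure (your static part is exactly the content of Proposition \ref{p6}). But the step you yourself call ``the technical heart,'' namely the one-block estimate for the fluctuating part in the presence of degenerate rates, is precisely the step you do not carry out. The remedy you sketch (an environment-adapted partition cutting at the largest increments of $W$, with a coupling across the slow bonds) is not what makes the proof work and is far from obviously implementable: the jumps of $W$ are dense, so at every scale every macroscopic interval contains bonds that are arbitrarily slow relative to the block size, and ``comparable rates within a block'' cannot be arranged by finitely many cuts. (As a side remark, the opening claim that the theorem reduces to $\|h\|_{-1,n}\to 0$ for the full $h$ is also not what one proves; the block-average piece is controlled in $\mc L^2(\nu_\rho)$, not in $H_{-1}$, as your own subsequent split acknowledges.)

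The paper's resolution is different and bypasses the spectral gap of the disordered block dynamics entirely. On each block $\Lambda_i$ one introduces both the disordered localized generator $L^i$ (rates $n/\Delta_x^n W$) and the \emph{homogeneous} localized generator $L^{i,0}$ (rates $1$), and observes the deterministic Dirichlet-form comparison
\[
\<h,-L^{i,0}h\>_\rho \;\le\; \frac{W(x_{i+1}/n)-W(x_i/n)}{n}\,\<h,-L^i h\>_\rho ,
\]
which holds bond by bond because $1 = (\Delta_x^n W/n)\cdot(n/\Delta_x^n W)$ and $\Delta_x^n W$ is bounded by the total increment of $W$ over the block. Although a single slow bond makes the factor $\Delta_x^n W/n$ large, the sum of these factors over the support of $H$ is controlled by $W(1)-W(0)$, so the comparison constant is $O(1/n)$ uniformly over blocks and over the degeneracy of individual bonds. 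Feeding this into the variational formula for $\|\cdot\|_{-1,n}$ (Lemma \ref{l9}) reduces the dynamic part of the replacement to the translation-invariant operator $L^{i,0}$, for which the purely static estimate of Proposition \ref{p6} (where the homogeneous spectral gap is hidden) applies verbatim. The degeneracy is thus absorbed by the total variation of $W$, not by re-blocking; without this or an equivalent device, your argument has a genuine hole at its central step.
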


As a corollary, in view of Lemma \ref{l7} we have the decomposition
\[
\mc Y_t^n(G) = \mc Y_0^n(G) +\varphi'(\rho) \int_0^t \mc Y_s^n(\mc L G) ds + \mc M_t^n(G),
\]
plus a rest  that vanishes in $\mc L^2(\bb P^n)$ as $n \to \infty$.  Take now a subsequence $n'$ such that $\{\mc Y_\cdot^n\}_n$ converges in distribution to some process $\mc Y_\cdot$. We can assume, taking a further subsequence if needed, that $\{\mc M_\cdot^n\}$ also converges to some process $\mc M_\cdot$. Differently from the superdiffusive case, here the function $\mc L G$ belongs to $\mc F$ and therefore the limiting processes $\mc Y_\cdot$, $\mc M_\cdot$ satisfy
\[
\mc Y_t(G) = \mc Y_0(G) + \int_0^t \mc Y_s(\mc L G) ds + \mc M_t(G)
\]
for any function $G \in \mc F$. As we did in the superdiffusive case, now our task is to prove that $\mc M_t(G)$ is a martingale of quadratic variation $2t \varphi(\rho) \mc E(G,G)$. By the invariance of $\nu_\rho$ under the evolution of $\eta_t$, the integral $\bb E^n[\<\mc M_t(G)\>]$ converges to $2t \varphi(\rho) \mc E(G,G)$. Therefore, it is enough to show that the variance of $\<\mc M_t(G)\>$ goes to 0 as $n' \to \infty$. For the superdiffusive case, it was enough to take the variance of the integrand and to show that it goes to $0$. This is not true in the superdiffusive case, since the increments of the form $W(x+\epsilon)-W(x)$ do not go to 0 with $\epsilon$, at least not uniformly, due to the jumps of $W$. We state the needed result in the form of a lemma.

\begin{lemma}
\label{l8}
\[
\lim_{n \to \infty} \bb E^n\Big[\Big( \int_0^t \sum_{x \in \bb Z} \big(g(\eta_s^n(x)-\varphi(\rho)\big) \big(\nabla_x^n G\big)^2 \Delta_x^n W ds\Big)^2 \Big] =0,
\]
where we have used the notation $\Delta_x^n W = W((x+1)/n) - W(x/n)$. 
\end{lemma}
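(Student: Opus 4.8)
The plan is to isolate inside
\[
 h(\eta) := \sum_{x \in \bb Z} \big(g(\eta(x))-\varphi(\rho)\big)\,(\nabla_x^n G)^2\,\Delta_x^n W, \qquad a_x := (\nabla_x^n G)^2\,\Delta_x^n W,
\]
the only dangerous piece, namely its projection onto the conserved field, and to treat the remainder as a current to which Theorem \ref{t6} applies verbatim. First note that $h$ is mean-zero under $\nu_\rho$, since $\int g(\eta(x))\,\nu_\rho(d\eta)=\varphi(\rho)$, and that $C:=\sum_x a_x = \mc E_n(G,G)$ is finite and converges to $\int (dG/dW)^2\,dW$. A direct second-moment estimate is hopeless: the fixed-time variance $\bb E_{\nu_\rho}[h^2]=\mathrm{Var}(g;\nu_\rho)\sum_x a_x^2$ does \emph{not} vanish, because $\sum_x (\Delta_x^n W)^2$ stays bounded away from $0$ as the squares of the macroscopic jumps of $W$ survive in the limit. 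This is precisely the difficulty announced before the statement, and it forces the use of Proposition \ref{p5} (through Theorem \ref{t6}) rather than a naive variance bound.

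The first step is a decomposition $h = h_{\mathrm{cur}} + h_{\mathrm{mass}}$. Fix a continuous, compactly supported $\mc B$ with $\int\mc B>0$ and set $\beta_x = \mc B(x/n)/\sum_z \mc B(z/n)$, so that $\sum_x\beta_x = 1$ and $\beta_x = O(1/n)$. Define the \emph{smeared mass mode} $h_{\mathrm{mass}} = C\sum_x \beta_x\big(g(\eta(x))-\varphi(\rho)\big)$ and $h_{\mathrm{cur}} = h - h_{\mathrm{mass}} = \sum_x d_x\big(g(\eta(x))-\varphi(\rho)\big)$ with $d_x = a_x - C\beta_x$. The whole point of subtracting $C\beta_x$ is that now $\sum_x d_x = C - C = 0$, so the partial sums $e_y = \sum_{x\le y} d_x$ vanish at both ends of $\bb Z$. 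A summation by parts then produces the genuine current representation
\[
 h_{\mathrm{cur}} = -\sum_{y \in \bb Z} e_y\,\big(g(\eta(y+1))-g(\eta(y))\big).
\]

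The second step treats the two pieces separately. For $h_{\mathrm{mass}}$ the smearing does all the work: since $\beta_x=O(1/n)$ one has $\bb E_{\nu_\rho}[h_{\mathrm{mass}}^2]=C^2\,\mathrm{Var}(g;\nu_\rho)\sum_x\beta_x^2 = O(1/n)$, whence by Cauchy--Schwarz in time and stationarity $\bb E^n[(\int_0^t h_{\mathrm{mass}}\,ds)^2]\le t^2\,\bb E_{\nu_\rho}[h_{\mathrm{mass}}^2]\to 0$. For $h_{\mathrm{cur}}$, which is now \emph{literally} of the form covered by Theorem \ref{t6} with the (now $n$-dependent) test function $H(y/n)=-e_y/\sqrt n$, that theorem gives directly
\[
 \bb E^n\Big[\Big(\int_0^t h_{\mathrm{cur}}\,ds\Big)^2\Big]\le \frac{5t}{\varphi(\rho)}\,\frac1n\sum_{y \in \bb Z} e_y^2\,\Delta_y^n W.
\]
Combining the two bounds via $(\int(h_{\mathrm{cur}}+h_{\mathrm{mass}}))^2 \le 2(\int h_{\mathrm{cur}})^2 + 2(\int h_{\mathrm{mass}})^2$ would finish the proof, once the right-hand side above is shown to vanish.

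I expect the main obstacle to be exactly the control of $\tfrac1n\sum_y e_y^2\,\Delta_y^n W$, that is, showing that this Stieltjes sum stays bounded uniformly in $n$. Boundedness of $e_y$ alone is not enough, because $\sum_y \Delta_y^n W$ diverges; one genuinely needs the tails of $e_y$ to be square-summable against $dW$. This is where the rapid decay of $G\in\mc F$ (coming from the $|W(x)|$-weight built into $\mc A$) enters: for large $y$ one has $e_y = -\sum_{x>y} a_x$, which inherits the decay of $H=dG/dW$, so $\sum_y e_y^2\,\Delta_y^n W$ converges to the finite integral $\int \bar e(x)^2\,dW(x)$ of the limiting profile $\bar e$, and the prefactor $1/n$ then yields the vanishing. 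Making this tail estimate uniform in $n$, and verifying it first for a class of test functions dense enough in $\mc F$ and then passing to the limit, is the one place that requires real care; all the remaining steps are the by-now routine combination of the Kipnis--Varadhan inequality (packaged in Theorem \ref{t6}) with an elementary second-moment bound.
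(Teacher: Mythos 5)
Your route is genuinely different from the paper's. The paper localizes: it first restricts to a box $\{|x|\le Mn\}$ (using that the coefficient sum is controlled by $\mc E_n(G,G)$), then introduces a block average over an intermediate scale $l$, controlling the replacement error by applying Theorem \ref{t6} to the telescoping currents $g(\eta(x))-g(\eta(x+i))$ inside each block, and finally kills the block-averaged term by a plain static variance bound of order $1/l$; the conclusion comes from the iterated limit $n\to\infty$, then $l\to\infty$, then $M\to\infty$. You instead perform a single global decomposition: subtract a macroscopically smeared mass mode (static variance $O(1/n)$) so that the remaining coefficients $d_x$ sum to zero, sum by parts to exhibit a genuine current, and apply Theorem \ref{t6} once with the $n$-dependent test function $e_y/\sqrt n$. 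Your diagnosis of why the naive variance bound fails (the squares of the macroscopic jumps of $W$ survive in $\sum_x(\Delta_x^nW)^2$) is exactly the point the paper makes before the lemma. What your approach buys is a one-parameter limit and an explicit $O(1/n)$ rate; what it costs is a global, rather than compactly supported, energy estimate.

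That cost is the one real gap, which you correctly flag but do not close: you must show $\sum_y e_y^2\,\Delta_y^nW$ is bounded (indeed $o(n)$ suffices) uniformly in $n$. On any fixed macroscopic box this is immediate from $|e_y|\le 2\mc E_n(G,G)$ and $\sum_{|y|\le Mn}\Delta_y^nW\le W(M)-W(-M)$; the issue is the tails, and boundedness of $e_y$ alone cannot handle them since $W(\pm\infty)=\pm\infty$. The estimate does close, using the $|W|$-weight built into $\|\cdot\|_2$: the computation in the proof of Lemma \ref{l6} shows $\int|W|\,(dG/dW)^2\,dW<\infty$ for $G\in\mc H_2$, whence for $y>Mn$ (beyond the support of $\mc B$) one gets $|e_y|=\sum_{x>y}a_x\le W(y/n)^{-1}\sum_{x>y}|W(x/n)|\,(\nabla^n_xG)^2\Delta^n_xW\le C(G)/W(y/n)$, and then $\sum_{y>Mn}e_y^2\,\Delta_y^nW\le C(G)^2\sum_{y>Mn}\Delta_y^nW/\big(W(y/n)W((y+1)/n)\big)=C(G)^2/W(M)$ by telescoping, with the symmetric bound on the left tail. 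With that inserted, your argument is complete and yields the lemma with an explicit rate, whereas the paper's block-averaging argument avoids any tail analysis by never leaving a compact box.
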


\begin{proof}
First notice that the sum under the integral in the previous expression is uniformly bounded by $2 \mc E_n(G,G)$. Therefore, we can restrict the sum to a finite box of arbitrary size $M$. By the energy estimate in Theorem \ref{t6}, we that for any $M>0$,
\begin{multline*}
\bb E^n\Big[\Big( \int_0^t \sum_{|x| \leq Mn} \big\{g(\eta_s^n(x))-g(\eta_s^n(x+1))\big\} \big(\nabla_x^n G\big)^2 \Delta_x^n W ds\Big)^2 \Big] \leq \\
	\leq \frac{20t}{n} \sum_{|x|\leq Mn} \big(\nabla_x^n G\big)^4 \big(\Delta_x^n W\big)^2.
\end{multline*}

Remember that $\nabla_x^n G$ is close to $H(x/n)$. This approximation is uniform in bounded intervals, in view of (\ref{ec10}). The function $H$ is bounded in bounded intervals, since it has a derivative in $\mc L^2(\bb R)$ (which is equal to the function $\mc L G$). Therefore, the last sum is bounded by $C(G) t \{W(M)-W(-M)\}^2/n$, which goes to $0$ as $n \to \infty$. Fix a number $l >0$. For notational convenience, assume that $l$ is a divisor of $n$. Required changes if it is not the case are straightforward. Repeating the computations above a finite number of times, we see that we can introduce a space average in the sum of the lemma. In other words, it is enough to show that
\[
\bb E^n\Big[\Big( \int_0^t \sum_{\substack{x:x/l \in \bb Z \\ |x|\leq Mn}} \Big( \frac{1}{l} \sum_{i=1}^l \big\{g(\eta_s^n(x+i)-\varphi(\rho)\big\} \Big) \big(\nabla_x^n G\big)^2 \Delta_x^n W ds\Big)^2 \Big]
\]
goes to $0$ as $n \to \infty$. But now the variance with respect to $\nu_\rho$ of the sum inside the integral is equal to
\[
\int \Big( \frac{1}{l} \sum_{i=1}^l \big\{g(\eta(i)-\varphi(\rho)\big\} \Big)^2 \nu_\rho(d\eta)
	\sum_{|x|\leq Mn/l}  \big(\nabla_x^{n/l} G\big)^4 \big(\Delta_{lx}^n W\big)^2,
\]
which now is of order $1/l$. Therefore, sending $l \to \infty$ after $n$ and before $M$, we complete the proof of the lemma. 
\end{proof}

The rest of the proof follows like in Section \ref{s4.2}, so we omit it. We just point out that this omitted part of the proof is simplified by the fact that we have constructed $\mc F$ in such a way that $\mc L_W$ is continuous in $\mc F$.

\subsection{The Boltzmann-Gibbs principle}

In this section we give an outline of the proof of Theorem \ref{t7}. As we mentioned before, the proof follows closely the ideas introduced by Chang \cite{Cha}, and we adopt here the proof in Chapter 11.1 of \cite{KL}. 

As we said before, the idea is to decompose the macroscopic fluctuations of $g(\eta_s^n(x))$ into two components: one given by the projection over the conserved quantities and another one orthogonal to the space of conserved quantities in a proper sense. The intuition is simple. Consider the subspace $\mc H_\rho^0$ of $\mc L^2(\nu_\rho)$ consisting of local functions $h$ satisfying $\int h d\nu_\rho =0$. An example of function in $\mc H_\rho^0$ is $\eta(x)-\rho$ for any $x \in \bb Z$. Another example is $g(\eta(x))-\varphi(\rho)$. A simple way to generate lots of local functions in $\mc H_\rho^0$ is to take $L f$ for $f$ local, where $L$ is the generator of the dynamics. What is remarkable, is that the space $\mc H_\rho^0$ can be equipped with a norm in such a way that $\mc H_\rho^0$ is the orthogonal direct sum of the subspace generated by $\{Lf; f \text{ local}\}$ and $\{\eta(x)-\rho; x \in \bb Z\}$. This decomposition is moreover continuous with respect to the energy estimate in Theorem \ref{t6}. Fortunately, due to the so-called {\em gradient condition} \cite{KL} of the zero-range process, we do not need to prove this decomposition, but only to follow the intuition given by it. 

Therefore, the idea is to find some function $f$ such that
\[
g(\eta(x)) -\varphi(\rho) -\varphi'(\rho)(\eta(x)-\rho) = \tau_x L f(\eta)
\]
plus an error term small in some sense. Here we have written $\tau_x f(\eta)= f(\tau_x \eta)$ and $\tau_x \eta(z)= \eta(x+z)$, that is, $\tau_x$ is the standard shift by $x$. We will see that the space-time fluctuations of $L f(\eta)$ are small, and that the error term will be small with respect to $\nu_\rho$, when averaged over small boxes of fixed size. 

We start with the fluctuations of $L f$. Fix an intermediate scale $l$. We will drop the dependence on $l$ from the notation, unless stated explicitly. For simplicity, assume that the function $H$ has a support contained in $(0,1)$. Define $x_i = il$ and define the generators $L^i$, $L^{i,0}$ acting on local functions $f: \Omega \to \bb R$ by
\begin{multline*}
L^i f(\eta) = \sum_{x=x_i}^{x_{i+1}-1} \frac{n}{\Delta_x^n W} 
	\Big\{ g(\eta(x)) \big[f(\eta^{x,x+1})-f(\eta)\big]  \\
		+g(\eta(x+1)) \big[f(\eta^{x+1,x})-f(\eta)\big] \Big\},
\end{multline*}
\begin{multline*}
L^{i,0} f(\eta) = \sum_{x=x_i}^{x_{i+1}-1} 
	\Big\{ g(\eta(x)) \big[f(\eta^{x,x+1})-f(\eta)\big]  \\
		+g(\eta(x+1)) \big[f(\eta^{x+1,x})-f(\eta)\big] \Big\}.
\end{multline*}

In other words, the operator $L^i$ is the generator of the dynamics restricted to the box $\Lambda_i=\{x_i,\dots,x_{i+1}\}$ and $L^{i,0}$ is the generator of a zero-range process in the same box, but with {\em uniform} transition rates. 

\begin{lemma}
\label{l9}
For any Lipschitz function $f: \bb N_0^{\Lambda_i} \to \bb R$ we have
\[
\lim_{n \to \infty} \bb E^n\Big[\Big( \int_0^t \frac{1}{n^{1/2}} \sum_{i=0}^{n/l-1} H(x_i) L^{i,0} \tau_{x_i} f(\eta_s^n)ds\Big)^2\Big] =0.
\]
\end{lemma}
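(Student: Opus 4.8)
The plan is to reduce the estimate to the Kipnis--Varadhan inequality (Proposition \ref{p5}) and then to control the resulting $H_{-1}$-norm. Write $h(\eta) = n^{-1/2}\sum_{i=0}^{n/l-1} H(x_i)\, L^{i,0}\tau_{x_i}f(\eta)$ for the integrand. Since $\nu_\rho$ is reversible, hence invariant, for the uniform-rate zero-range dynamics generated by each $L^{i,0}$, one has $\int L^{i,0}\tau_{x_i}f\, d\nu_\rho = 0$ and therefore $\int h\, d\nu_\rho = 0$, so Proposition \ref{p5} gives
\[
\bb E^n\Big[\Big(\int_0^t h(\eta_s^n)\, ds\Big)^2\Big] \le 20t\, ||h||_{-1,n}^2.
\]
It thus suffices to show $||h||_{-1,n}^2 \to 0$ as $n \to \infty$, the scale $l$ being fixed.

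To estimate $||h||_{-1,n}^2 = \sup_u\{2\<u,h\>_\rho - \<u,-L_n u\>_\rho\}$ I would exploit that each summand $L^{i,0}\tau_{x_i}f$ lies in the range of a generator. Set $V_i = \tau_{x_i}f$. Because the bilinear form $\<\cdot,-L^{i,0}\cdot\>_\rho$ is symmetric and non-negative, Cauchy--Schwarz gives $|\<u,L^{i,0}V_i\>_\rho| \le \<u,-L^{i,0}u\>_\rho^{1/2}\<V_i,-L^{i,0}V_i\>_\rho^{1/2}$, and Young's inequality with a parameter $\beta_i>0$ then yields
\[
2\<u,h\>_\rho \le \sum_i \frac{1}{\beta_i}\<u,-L^{i,0}u\>_\rho + \frac{1}{n}\sum_i \beta_i H(x_i)^2\<V_i,-L^{i,0}V_i\>_\rho.
\]
The decisive ingredient is the comparison of the two Dirichlet forms: $L^i$ carries the true rates $n/\Delta_x^n W$ whereas $L^{i,0}$ carries the uniform rate $1$, and since $\Delta_x^n W \le W(1)$ for every bond in the support region, bond by bond $\<u,-L^{i,0}u\>_\rho \le (W(1)/n)\<u,-L^i u\>_\rho$. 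Choosing $\beta_i = W(1)/n$ makes the first sum at most $\sum_i\<u,-L^i u\>_\rho \le \<u,-L_n u\>_\rho$, the last inequality holding because the full Dirichlet form is a sum of non-negative bond contributions; this term is then absorbed by the variational formula.

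There remains $||h||_{-1,n}^2 \le (W(1)/n^2)\sum_i H(x_i)^2\<V_i,-L^{i,0}V_i\>_\rho$. By translation invariance of $\nu_\rho$ and of the uniform rates, $\<\tau_{x_i}f,-L^{i,0}\tau_{x_i}f\>_\rho$ equals a single constant $D_0(f)$ depending only on $f$, $l$ and $\rho$, and it is finite because $g$ is bounded and $f$ is Lipschitz on the finite box. Since $H$ is bounded with compact support there are $O(n/l)$ nonzero terms, so $\sum_i H(x_i)^2 \le (n/l)||H||_\infty^2$ and hence $||h||_{-1,n}^2 \le W(1)D_0(f)||H||_\infty^2/(ln) \to 0$. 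The hard part, and the only place where the degeneracy of the environment really enters, is this Dirichlet-form comparison: one must use that the physical rates $n/\Delta_x^n W$ are uniformly of order at least $n$ in order to trade the uniform form against the true form and so extract the decisive extra factor $1/n$ that forces the bound to vanish; the translation-invariance identity for $D_0(f)$ is what keeps the $V_i$-contribution bounded uniformly in $n$.
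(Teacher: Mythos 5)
Your proof is correct and follows essentially the same route as the paper's: Kipnis--Varadhan plus the variational formula for $\|\cdot\|_{-1,n}$, the bond-by-bond comparison $\<u,-L^{i,0}u\>_\rho \le \big((W(1)-W(0))/n\big)\<u,-L^i u\>_\rho$ that extracts the decisive extra factor $1/n$, and translation invariance of $L^{i,0}$ to make the $\<\tau_{x_i}f,-L^{i,0}\tau_{x_i}f\>_\rho$ terms a single constant. The only difference is cosmetic (you apply Cauchy--Schwarz directly at the level of the Dirichlet forms and choose $\beta_i$ independent of $H(x_i/n)$, while the paper weights $\beta_i$ by $|H(x_i/n)|$), and the resulting $O(1/(ln))$ bound is the same.
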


\begin{proof}
By Proposition \ref{p5}, the expectation is bounded by $20t||F_n||_{-1,n}^2$, where we have used $F_n$ as a shorthand for the sum inside the integral. Now remember the variational formula for $||F_n||_{-1,n}^2$:
\begin{equation}
\label{ec11}
||F_n||_{-1,n}^2 = \sup_{h \in \mc L^2(\nu_\rho)} \big\{ 2\<h,F_n\>_\rho -\<h,-L_n h\>_\rho\big\}.
\end{equation}

We have the following relations between the different Dirichlet forms:
\[
\sum_{i=1}^{n/l} \<h, -L^ih\>_\rho \leq \<h, -L_n h\>_\rho,
\]
\[
\<h, -L^{i,0} h\>_\rho \leq \frac{W(x_{i+1}/n)-W(x_i/n)}{n} \<h, -L^ih\>_\rho.
\]

At this point, the best estimate we can have for the difference involving $W$ is $W(x_{i+1}/n)-W(x_i/n) \leq W(1)-W(0)$. But the extra factor $1/n$ will prove to be useful. Now we bound each one of the terms in $\<h,F_n\>$ separately. By Cauchy-Schwartz inequality,
\begin{align*}
 \<h, L^{i,0} \tau_{x_i} f\>_\rho
 	&\leq \frac{1}{2\beta_i} \<h, -L^{i,0} \tau_{x_i} f\>_\rho \\
	&\leq \frac{W(1) -W(0)}{2 n \beta_i} \<h,-L^ih\>_\rho +\frac{\beta_i}{2} \<\tau_{x_i} f, -L^{i,0} \tau_{x_i} f\>_\rho.
\end{align*}

In (\ref{ec11}), this term is multiplied by $H(x_i/n)/n^{1/2}$. Therefore, choosing $\beta_i = |H(x_i/n)|(W(1)-W(0))/n^{3/2}$, we can cancel the term involving $h$ in (\ref{ec11}) to get the bound
\[
 ||F_n||_{-1,n}^2 \leq \frac{W(1)-W(0)}{n^2} \sum_{i=1}^{n/l} |H(x_i/n)| \<\tau_{x_i}f, -L^{i,0} \tau_{x_i} f\>_\rho.
\]

The whole point of introducing $L^{i,0}$ is that this operator is {\em translation invariant}. As we mentioned before, the space averaging is crucial in order to obtain the Boltzmann-Gibbs principle. Due to this translation invariance, all the terms of the form $\<\tau_{x_i} f, -L^{i,0} \tau_{x_i} F\>_\rho$ are equal. We conclude that the expectation in the statement of the lemma is bounded by $C/n$ for some constant depending only on $f$ and $H$, which proves the lemma. Notice that in the homogeneous case (when the dynamics is already translation-invariant) we would have obtained a bound of order $C/n^2$ for this expectation. 
\end{proof}

The point now is that time integration has already played its role, and now only the spatial properties of the invariant measure $\nu_\rho$ are needed to continue. Define $v_x(\eta) = g(\eta(x)) -\varphi(\rho) - \varphi'(\rho)(\eta(x)-\rho)$. We have the following result:

\begin{proposition}
 \label{p6}
 \[
  \lim_{l \to \infty} \inf_{f} \limsup_{n \to \infty} \int \Big(\frac{1}{n^{1/2}} \sum_{i=1}^{n/l} \sum_{j=x_i+1}^{x_{i+1}} \big(v_j(\eta) - L^{i,0} \tau_{x_i} f\big) H(x_i/n)\big)^2 d\nu_\rho(\eta) =0,
 \]
where the infimum is over all the functions $f: \bb N_0^{\Lambda_l} \to \bb R$.
\end{proposition}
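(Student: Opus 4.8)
The plan is to collapse the double sum into a sum of independent, mean-zero box contributions, to reduce everything to a single static variance on a box of size $l$, and finally to invoke the equivalence of ensembles, where the choice of the coefficient $\varphi'(\rho)$ produces the crucial cancellation. Throughout, write $E_\rho$ for expectation with respect to $\nu_\rho$ and $x_i = il$. First I would introduce the box functionals
\[
 U_i(\eta) = \sum_{j=x_i+1}^{x_{i+1}}\big(v_j(\eta) - L^{i,0}\tau_{x_i}f(\eta)\big),
\]
so that the integrand is the square of $n^{-1/2}\sum_i U_i\,H(x_i/n)$. Each $U_i$ is measurable with respect to the coordinates in the box $\Lambda_i$ and has $E_\rho[U_i]=0$: the first because $E_\rho[v_j]=0$ by the very definition of $v_j$, the second because $\nu_\rho$ is invariant for the uniform dynamics generated by $L^{i,0}$, so that $E_\rho[L^{i,0}\tau_{x_i}f]=0$. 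Since $\nu_\rho$ is a product measure and the $U_i$ depend on essentially disjoint blocks of sites, expanding the square and integrating annihilates the off-diagonal terms, apart from correlations through the single shared endpoint of adjacent boxes; these are of lower order and handled by a standard argument as in \cite{KL}. We are thus left with
\[
 \frac{1}{n}\sum_{i=1}^{n/l} H(x_i/n)^2\, E_\rho[U_i^2].
\]

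By translation invariance of $L^{i,0}$ and of $\nu_\rho$, the variance $E_\rho[U_i^2]$ is independent of $i$; denote its common value, computed on the reference box $\{1,\dots,l\}$, by $V_l(f)$. Since $H$ is continuous with compact support, $\frac{1}{n}\sum_i H(il/n)^2 \to \frac{1}{l}\int H(x)^2\,dx$ as $n\to\infty$, so that the previous display has $\limsup_n$ equal to $l^{-1}\big(\int H^2\big) V_l(f)$. It therefore suffices to prove $\lim_{l\to\infty} l^{-1}\inf_f V_l(f)=0$. Writing $\Phi=\sum_{j=1}^l v_j$ and $L^0_l$ for the uniform-rate zero-range generator on $\{1,\dots,l\}$ (the factor $l$ can be absorbed into $f$ under the infimum), one has $V_l(f)=\|\Phi - L^0_l f\|_{L^2(\nu_\rho)}^2$. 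The operator $L^0_l$ is self-adjoint in $L^2(\nu_\rho)$ and conserves $N_l=\sum_{j=1}^l\eta(j)$, and on each hyperplane $\{N_l=m\}$ the restricted chain is irreducible (as $g(k)>0$ for $k\geq 1$); hence its kernel is exactly the set of functions of $N_l$ and the closure of its range is the orthogonal complement thereof. By the projection theorem,
\[
 \inf_f V_l(f) = \big\| E_\rho[\Phi\mid N_l]\big\|_{L^2(\nu_\rho)}^2 ,
\]
which reduces the claim to $l^{-1}\|E_\rho[\Phi\mid N_l]\|_{L^2(\nu_\rho)}^2\to 0$.

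The decisive and most delicate step, which I expect to be the \emph{main obstacle}, is the equivalence of ensembles. By exchangeability, $E_\rho[g(\eta(j))\mid N_l=m]=:\Psi_l(m)$ does not depend on $j$, so that
\[
 E_\rho[\Phi\mid N_l=m] = l\Psi_l(m) - l\varphi(\rho) - \varphi'(\rho)(m-l\rho).
\]
The equivalence of ensembles for the zero-range invariant measures (Chapter 11 of \cite{KL}) yields $\Psi_l(m)=\varphi(m/l)+O(1/l)$, uniformly on the relevant range of densities; substituting $m=l\rho+s$ and Taylor expanding $\varphi$ about $\rho$, the contribution $l\varphi(\rho)+\varphi'(\rho)s$ cancels exactly against the subtracted terms and only a remainder of order $s^2/l+O(1)$ survives. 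This cancellation of the term linear in $s$ is precisely what forces the coefficient $\varphi'(\rho)$: without it the conditional expectation would be of order $s\sim\sqrt l$ and the normalization by $l$ would fail. Since $s=N_l-l\rho$ is a centered sum of $l$ i.i.d. variables with $E_\rho[s^4]=O(l^2)$, we obtain $E_\rho[(s^2/l)^2]=O(1)$ and hence $\|E_\rho[\Phi\mid N_l]\|_{L^2(\nu_\rho)}^2=O(1)$; dividing by $l$ and letting $l\to\infty$ finishes the proof. The real work is to make the equivalence-of-ensembles expansion quantitative enough to bound the surviving second moment uniformly in $l$, which for the zero-range process can be carried out explicitly through the partition-function formula for $\Psi_l$ together with a local central limit theorem for $N_l$.
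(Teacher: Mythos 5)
Your argument is correct and coincides with the proof the paper itself points to: Proposition \ref{p6} is not proved in the paper but referred to Chapter 11 of \cite{KL}, whose argument is exactly your chain of reductions --- block decomposition under the product measure (with the adjacent-block boundary terms controlled by Cauchy--Schwarz at the same order), identification of $\inf_f$ as the norm of the projection onto $\ker L^{i,0}$, i.e.\ onto functions of $N_l$, and the equivalence of ensembles in which the coefficient $\varphi'(\rho)$ cancels the term linear in $N_l - l\rho$. The single step you leave informal, namely making the expansion $\Psi_l(m)=\varphi(m/l)+O(1/l)$ uniform and cutting off the large-deviation regime $|m/l-\rho|$ large before Taylor expanding, is precisely the point you flag as the remaining quantitative work, and it is standard here since $g(n)=\mathbf 1(n>0)$ makes the marginals of $\nu_\rho$ geometric with all exponential moments available.
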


Observe that the statement of this proposition depends only on the nature of the invariant measure $\nu_\rho$ and the operator $L^{i,0}$ which does not carry any information about the environment. A detailed proof of this proposition can be found in \cite{KL}.

Now the proof of Theorem \ref{t7} is essentially finished. The scheme is the following. First introduce a spatial average, substituting $H((x_i+j)/n)$ by $H(x_i/n)$, for $j=1,\dots,l$. This introduces an error that vanishes as $n \to \infty$ and then $l \to \infty$ due to the uniform continuity of $H$ in the interval $[0,1]$. Then substract to each block the term $L^{i,0} \tau_{x_i} f$. This can be done in view of Lemma \ref{l9}. And then put the expectation inside the time integration using Cauchy-Schwartz inequality at the cost of a multiplicative constant $t^2$. At this point we have arrived exactly to Proposition \ref{p6}, which end the proof of Theorem \ref{t7}.

\subsection{Annealed density fluctuations}

In this section we discuss Corollary \ref{c1}. As we already said, we do not expect to have a functional central limit theorem for $\mc Y_\cdot^n$, since we do not have a good set of test functions not depending on $W$. For continuous functions $G: \bb R \to \bb R$ of bounded support, the random variables $\mc Y_t^W(G)$ are well defined, but even for the generalized Ornstein-Uhlenbeck process based on the usual Laplacian $\Delta$, $\mc Y_t$ is not a well defined process in the set of Radon measures, which is the dual space corresponding to continuous test functions of bounded support. Therefore, smoother test functions are needed to define $\mc Y_t$ properly. In \cite{HS}, the Sobolev space $\mc H_{3+d}$ is used as space of test functions in order to construct $\mc Y_t$. In \cite{KL}, the construction is carried out using the space $\mc H_{1+d/2}$ as test space. In any case, some smoothness of test functions is required. In our case, the very concept of smoothness changes with $W$. 

Let $\{a_n\}_n$, $\{b_n\}_n$ be two sequences of random variables defined in the same probability space. Denote by $(b_n|a_n)$ the (random) value of $b_n$ conditioned to the value of $a_n$. Now we make a simple observation.

\begin{lemma}
\label{l10}
Assume that there are two random variables $a$, $b$ such that $a_n \to a$ a.s. as $n \to \infty$ and such that $(b_n|a_n) \to (b|a)$ in distribution as $n \to \infty$. If the random variables $\{b_n\}_n$ are $\bb R^d$-valued (with $d<+\infty$), then the random vector $(a_n,b_n)$ converges in distribution to $(a,b)$.
\end{lemma}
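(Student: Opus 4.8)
The plan is to establish joint convergence by testing against a convergence-determining family that treats the two coordinates separately, handling $a_n$ through its almost sure convergence and $b_n$ through characteristic functions — the latter being legitimate precisely because $d<+\infty$. Write $E$ for the (Polish) space in which $a_n$ and $a$ take values; in the application this is a path space. The target is then $(a_n,b_n)\to(a,b)$ in distribution on $E\times\bb R^d$. I would reduce this to showing that, for every bounded continuous $h:E\to\bb C$ and every $\mu\in\bb R^d$,
\[
 E\big[h(a_n)e^{i\mu\cdot b_n}\big]\longrightarrow E\big[h(a)e^{i\mu\cdot b}\big].
\]
The maps $(x,y)\mapsto h(x)e^{i\mu\cdot y}$ span a conjugation-closed algebra containing the constants and separating points of $E\times\bb R^d$, so by Stone--Weierstrass it is convergence-determining along any tight sequence. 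Tightness of $\{(a_n,b_n)\}_n$ will not be an obstacle and in fact comes for free from the computation below: $a_n\to a$ a.s. makes $\{a_n\}_n$ tight, and the case $h\equiv1$ of the display, combined with L\'evy's continuity theorem, yields $b_n\to b$ in distribution and hence tightness of $\{b_n\}_n$; tightness of both marginals gives joint tightness.

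The core is the conditioning step, which proves the display unconditionally. Writing $\phi_n(x)=E[e^{i\mu\cdot b_n}\mid a_n=x]$ for a regular version of the conditional characteristic function, I would factorize
\[
 E\big[h(a_n)e^{i\mu\cdot b_n}\big]=E\big[h(a_n)\,\phi_n(a_n)\big].
\]
The hypothesis $(b_n\mid a_n)\to(b\mid a)$ in distribution says exactly that the conditional law of $b_n$ given $a_n$ converges weakly to that of $b$ given $a$ along the almost surely convergent environments $a_n\to a$; since $b_n$ is $\bb R^d$-valued, L\'evy's continuity theorem upgrades this to the pointwise statement $\phi_n(a_n)\to\phi(a)$ in probability, where $\phi(x)=E[e^{i\mu\cdot b}\mid a=x]$. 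Continuity of $h$ gives $h(a_n)\to h(a)$ a.s., and since $\lvert h(a_n)\phi_n(a_n)\rvert\le\lVert h\rVert_\infty$ uniformly, the bounded convergence theorem yields
\[
 E\big[h(a_n)\phi_n(a_n)\big]\longrightarrow E\big[h(a)\phi(a)\big]=E\big[h(a)\,E[e^{i\mu\cdot b}\mid a]\big]=E\big[h(a)e^{i\mu\cdot b}\big],
\]
which is the reduced claim.

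The step I expect to be the crux is extracting $\phi_n(a_n)\to\phi(a)$ from the conditional-convergence hypothesis: one must be careful that the conditioning point is itself moving ($a_n\to a$) while the conditional laws converge, so this is a conditional Slutsky/L\'evy argument rather than convergence of characteristic functions at a fixed point. This is precisely where the finiteness $d<+\infty$ is used, since it lets characteristic functions characterize the conditional limits and removes any need for tightness of $b_n$ in an infinite-dimensional test space — the very feature that fails for the full field $\mc Y_\cdot^W$ and that forces the purely annealed, finite-dimensional formulation of Corollary \ref{c1}.
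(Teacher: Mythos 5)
Your proof is correct, and it reaches the conclusion by a recognizably different route from the one in the paper, even though the central idea --- condition on $a_n$, invoke the convergence of the conditional law of $b_n$ given $a_n$ together with $a_n\to a$ a.s., and finish with bounded convergence --- is the same in both arguments. The paper tests against an arbitrary bounded continuous $F$ and splits $F(a_n,b_n)=F(a,b_n)+\{F(a_n,b_n)-F(a,b_n)\}$; the first term is handled by the conditioning step, and the error term is killed by uniform continuity of $F$ on compacts, which requires joint tightness; tightness of $\{b_n\}_n$ is then extracted from the $\sigma$-compactness of $\bb R^d$ by patching together the compact sets associated with each realization of the environment. You instead restrict to the factorized test functions $h(x)e^{i\mu\cdot y}$, for which the conditioning step alone proves convergence with no error term (since $h(a_n)\to h(a)$ a.s. and $\phi_n(a_n)\to\phi(a)$ come separately and the product is uniformly bounded), and you recover tightness of $\{b_n\}_n$ from the $h\equiv 1$ case via L\'evy's continuity theorem before invoking Stone--Weierstrass to pass to general $F$. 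Both arguments spend the hypothesis $d<+\infty$ exactly where it must be spent, namely on tightness of $\{b_n\}_n$; yours buys a cleaner core computation at the price of the Stone--Weierstrass reduction, while the paper's is more elementary but needs the slightly fussier $\sigma$-compactness patching. One cosmetic remark: in the step where you deduce $\phi_n(a_n)\to\phi(a)$, L\'evy's continuity theorem is not what is doing the work --- weak convergence of the conditional laws already implies convergence of the conditional characteristic functions by testing against $y\mapsto e^{i\mu\cdot y}$; L\'evy's theorem is needed only later, in the direction you use it to obtain $b_n\Rightarrow b$ and hence tightness from the $h\equiv1$ case.
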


\begin{proof}
Let $F$ be a bounded, continuous function of two variables. We want to prove that $E[F(a_n,b_n)]$ converges to $E[F(a,b)]$ as $n \to \infty$. By the dominated convergence theorem, and since $(b_n|a_n) \to (b|a)$, we see that
\[
E[F(a,b_n)] =E[E[F(a,b_n)|a_n]] \xrightarrow{n \to \infty} E[E[F(a,b)]|a] = E[F(a,b)].
\]

Now we just need to prove that $E[F(a_n,b_n)-F(a,b_n)]$ goes to $0$ as $n \to \infty$. Since $F$ is continuous, it is uniformly continuous in compact sets. Therefore, it is enough to prove that $\{(a_n,b_n)\}_n$ is tight. Since $\{a_n\}_n$ is convergent, it is automatically tight. We are left with tightness of $\{b_n\}_n$. At this point we need to use the fact that $b_n \in \bb R^d$. More precisely, we need $\{b_n\}_n$ to be defined in a $\sigma$-compact space. Of course, this is the case for $\bb R^d$, since for example $\bb R^d$ is the increasing union of the compact sets $K_l = \{x \in \bb R^d; |x|\leq l\}$, $l \in \bb N$. 

For any fixed realization of $\{a_n\}_n$, $\{(b_n|a_n)\}_n$ is tight. For any $\epsilon >0$, to each realization we associate a compact set $K$ such that $P((b_n|a_n) \notin K) <\epsilon$. Notice that any $K$ is contained in one of the sets $K_l$. Therefore, we have chosen an increasing sequence of sets $A_l$ on the underlying probability space for which the compact set chosen above is contained in $K_l$. Taking $l$ large enough, we obtain tightness for $\{b_n\}_n$. 
\end{proof}

Now it is clear on which sense Corollary \ref{c1} holds. Take a finite collection of continuous functions $G_1,\dots,G_l$ with bounded support and a finite collection of times $t_1 \leq \dots \leq t_l$. By Theorem \ref{t3} and an approximation procedure, the vector $(\mc Y_{t_1}^{n,W}(G_1),\dots,\mc Y_{t_l}^{n,W}(G_l))$ converges in distribution to $(\mc Y_{t_1}^{W}(G_1),\dots,\mc Y_{t_l}^{W}(G_l))$. And now by Lemma \ref{l10}, we conclude that the vector $(\mc Y_{t_1}^{n}(G_1),\dots,\mc Y_{t_l}^{n}(G_l))$ converges in distribution to the vector $(\mc Y_{t_1}^{\text{ann}}(G_1),\dots,\mc Y_{t_l}^{\text{ann}}(G_l))$.

\section{Current fluctuations}
\label{s6}
In this section we prove Theorem \ref{t4}. Therefore, from now on we take $d=1$. We will treat the superdiffusive and subdiffusive cases separately. We start explaining a generalization of the original idea of Rost and Vares \cite{RV}, which works in both cases.

Let us define $G_l(x) = (1-x/l)^+ \mathbf{1}(x \geq 0)$, where $y^+$ denotes the positive part of $y$. The sequence $\{G_l\}_l$ converges uniformly in compacts to the Heaviside function $H_0$ defined in Section \ref{s4.5}. This convergence also holds in another ``energy'' sense. Of course $G_l$ is not an admissible test function (neither in the superdiffusive case nor in the subdiffusive case). In the other hand, $\mc Y_t(G_l)$ is well defined for any $t \geq 0$, since $\mc Y_t(\cdot)$ is a white noise in $\mc F$ and therefore it can be continuously extended to $\mc L^2(\bb R)$. By a similar reasoning, the joint distributions  $\{\mc Y_{t_1}(G_l),\dots,\mc Y_{t_l}(G_l)\}$ are well defined. What is not well defined is $\mc Y_\cdot(G_l)$ as a real-valued, right-continuous process; this is one of the reasons why we only obtain convergence of finite distributions in Theorem \ref{t4}.
Let us recall the decomposition
\[
\mc Y_t(G) -\mc Y_0(G) = \varphi'(\rho) \int_0^t \mc Y_s(\mc L G)ds + \mc M_t(G).
\]

Using the energy estimate in Theorem \ref{t6} and also the explicit expression for the quadratic variation of $\mc M_t(G)$, we see that the left-hand side of this identity is continuous with respect to the seminorm $\mc E(G,G)^{1/2}$. The idea is that $\{G_l\}_l$ converges to $H_0$ under this seminorm, which allows to define the random variable $\mc Y_t(H_0)-\mc Y_0(H_0)$ by continuity. It is important to operate carefully with this random variable, since the terms $\mc Y_t(H_0)$ and $\mc Y_0(H_0)$ are {\em not} well defined. In order to obtain convergence of the rescaled current to this random variable, we will see that the sequence $\{\mc Y_t^n(G_l)-\mc Y_0^n(G_l)\}_n$ converges to the rescaled current, {\em uniformly} in $n$. In particular the limits $n \to \infty$ and $l \to \infty$ will be exchangeable.

\subsection{The superdiffusive case}

In Theorem \ref{t6}, we proved the energy estimate for the subdiffusive case. The proof for the superdiffusive case is exactly the same. For the reader's convenience, we restate here the energy estimate.

\begin{theorem}
\label{t8}
For any function $H: \bb R \to \bb R$,
\[
\bb E^n \Big[\Big( \int_0^t \frac{1}{n^{1/2}} \sum_{x \in \bb Z} \big(g(\eta_s^n(x)) -\varphi(\rho)\big) \mc L_n H(x/n) ds \Big)^2 \Big] \leq 5t \varphi(\rho)^{-1} \mc E_n(H,H).
\]
\end{theorem}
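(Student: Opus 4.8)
The plan is to reproduce the proof of Theorem~\ref{t6} essentially line by line, the only genuinely new ingredient being a discrete summation by parts that rewrites the integrand in the bond-difference form already handled there. Set $h(\eta)=\frac{1}{n^{1/2}}\sum_{x\in\bb Z}\big(g(\eta(x))-\varphi(\rho)\big)\mc L_n H(x/n)$. Since $\int g(\eta(x))\,d\nu_\rho=\varphi(\rho)$, we have $\int h\,d\nu_\rho=0$, so Kipnis--Varadhan inequality (Proposition~\ref{p5}) bounds the left-hand side of the statement by $20t\,\|h\|_{-1,n}^2$, and the entire problem reduces to estimating $\|h\|_{-1,n}^2$ through its variational formula.

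The new step is to move the operator $\mc L_n$ off $H$ and onto $g(\eta(\cdot))$. Because $q$ is symmetric, a discrete summation by parts gives
\[
 \sum_{x\in\bb Z}\big(g(\eta(x))-\varphi(\rho)\big)\mc L_n H(x/n)
  =\frac{1}{n}\sum_{x,z\in\bb Z} q\big((z-x)/n\big)\big(g(\eta(x))-g(\eta(z))\big)\big(H(z/n)-H(x/n)\big),
\]
where the constant $\varphi(\rho)$ drops out of the difference $g(\eta(x))-g(\eta(z))$. This is exactly the expression treated in Theorem~\ref{t6}, with the nearest-neighbour bonds replaced by arbitrary bonds $(x,z)$ and the weight $n/\Delta_x^n W$ replaced by the long-jump weight $\frac{1}{n}q((z-x)/n)$. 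The crucial compatibility is that, by the homogeneity $q((z-x)/n)=n^{1+\alpha}q(z-x)$ together with the time scaling $n^\alpha$, the reversible Dirichlet form of $L_n$ is
\[
 \<f,-L_n f\>_\rho=\frac{1}{2n}\sum_{x,z\in\bb Z} q\big((z-x)/n\big)\int g(\eta(x))\big[f(\eta^{x,z})-f(\eta)\big]^2\,d\nu_\rho,
\]
carrying precisely the same bond weight $\frac{1}{n}q((z-x)/n)$ as the sum above.

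From here the argument is a transcription of Theorem~\ref{t6}. Expanding $\<f,h\>_\rho$ as a sum over ordered bonds and applying the weighted Cauchy--Schwarz inequality $ab\le \beta a^2/2+b^2/2\beta$ bond by bond, with $a=\sqrt{g(\eta(x))}$ and $b=\sqrt{g(\eta(x))}[f(\eta^{x,z})-f(\eta)]$, one chooses $\beta_{x,z}$ proportional to $n^{-1/2}|H(z/n)-H(x/n)|$ so that the term involving $f$ cancels exactly against $\<f,-L_nf\>_\rho$ in the variational formula. The leftover term is controlled using $\int g(\eta(x))\,d\nu_\rho=\varphi(\rho)$ and collapses to a multiple of $\frac{1}{n^2}\sum_{x,z}q((z-x)/n)(H(z/n)-H(x/n))^2=\mc E_n(H,H)$, which after multiplying by the Kipnis--Varadhan factor $20t$ yields the stated bound. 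I expect the main obstacle to be purely this bookkeeping: one must verify that the summation by parts produces a bond weight coinciding, up to the factor $2$ absorbed by Cauchy--Schwarz, with that of the Dirichlet form of $L_n$, so that no stray powers of $n$ survive the cancellation. Once the homogeneity of $q$ is used to confirm this matching, the constants fall out exactly as in the subdiffusive case.
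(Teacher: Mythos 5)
Your proposal is correct and is exactly the paper's intended argument: the paper's entire proof of Theorem \ref{t8} is the remark that it is ``exactly the same'' as Theorem \ref{t6}, and your discrete summation by parts together with the homogeneity bookkeeping $q((z-x)/n)=n^{1+\alpha}q(z-x)$ (matched against the time scaling $n^{\alpha}$) is precisely what makes the bond weight of the rewritten integrand coincide with that of the Dirichlet form of $L_n$, so the Kipnis--Varadhan/weighted Cauchy--Schwarz scheme of Theorem \ref{t6} goes through verbatim. The only caveat is the constant: carried out carefully, your argument (like the paper's own proof of Theorem \ref{t6}) produces a bound of the form $C\,t\,\varphi(\rho)\,\mc E_n(H,H)$ rather than $5t\,\varphi(\rho)^{-1}\mc E_n(H,H)$, but this discrepancy is already present in the paper and is immaterial for the applications, which only use an estimate of the form $c(\rho)\,t\,\mc E_n(H,H)$.
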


This estimate, together with Theorem \ref{t2} allows to obtain part $i)$ of Theorem \ref{t4}. For $x \in \bb Z$ define $J_x^n(t)=n^{-1} J_x(tn^{2\alpha})$. First notice that
\begin{align*}
J_0^n(t) - \mc Y_t^n(G_l) + \mc Y_0^n(G_l)
	&= \int_0^t \frac{1}{n^{1/2}} \sum_{x \in \bb Z} \big(g(\eta_s^n(x)) -\varphi(\rho)\big) \mc L_n(H_0-G_l) ds \\
	&+ \mc M_t^n(H_0-G_l).
\end{align*}

In principle, $\mc M_t^n(H_0-G_l)$ is not well defined, since $H_0-G_l$ is not in $\mc L^2(\bb R)$. However, using the relations $\eta_s^n(x)-\eta_0^n(x) = J_x^n(t)$ and the explicit formula for the compensator of $J_x^n(t)$, this formula is easily justified. Using Theorem \ref{t8} and the explicit form of the quadratic variation $\<\mc M_t^n(H_0-G_l)\>$ we obtain that
\[
\bb E^n\Big[ \Big( J_0^n(t)- \mc Y_t^n(G_l) + \mc Y_0^n(G_l)\Big)^2\Big]
	\leq c(\rho) t \mc E_n(H_0-G_l,H_0-G_l)
\]
for some constant $c(\rho)$ not depending on $l$, $n$ nor $t$. A simple computation shows that $\mc E_n(H_0-G_l,H_0-G_l) \leq c l^{1-\alpha}$ for some constant $c$, depending only on $\alpha$. We conclude that $\mc Y_t^n(G_l)-\mc Y_0^n(G_l)$ converges to $J_0^n(t)$ as $l \to \infty$ {\em uniformly} in $n$. 

At this point, we need to justify the limit $\mc Y_t^n(G_l) \to \mc Y_t(G_l)$ as $n \to \infty$, which does not follows from the convergence of $\mc Y_\cdot^n$, since $G_l$ is not a test function. But this is elementary, since the sequence $\{\mc Y_t^n(\cdot)\}_n$ is uniformly continuous under convergence in $\mc L^2$. 

Repeating these computations for the process $\mc Y_\cdot$, we see that
\[
E\big[ \big(\mc Y_t(G_l-G_{l+m})-\mc Y_0(G_l-G_{l+m})\big)^2\big] \leq c t l^{1-\alpha}
\]
for a constant $c$ depending only on $\alpha$ and $\rho$. In particular, the sequence $\{\mc Y_t(G_l)-\mc Y_0(G_l)\}_l$ is a Cauchy sequence and $\mc Y_t(H_0)-\mc Y_0(H_0)$ is well defined as the limit of this sequence. Now we can exchange the limits in $n$ and $l$ to get the following result.

\begin{theorem}
\label{t9}
For the zero-range process with long jumps and $\alpha \in (1,2)$, starting from the initial distribution $\nu_\rho$,
\[
\lim_{n \to \infty} \frac{1}{n} J_0(tn^{2\alpha}) = \mc Y_t(H_0) -\mc Y_0(H_0),
\]
in the sense of finite-dimensional distributions, where the process $\mc Y_t(H_0)-\mc Y_0(H_0)$ is defined as above. 
\end{theorem}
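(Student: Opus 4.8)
The plan is to assemble the decomposition and the two energy-type estimates established above into a single commuting-limits argument, working directly with finite-dimensional distributions. Fix times $0 \leq t_1 < \cdots < t_k \leq T$ and abbreviate $J_0^n(t) = n^{-1} J_0(tn^{2\alpha})$. The goal is to show that $\big(J_0^n(t_1), \dots, J_0^n(t_k)\big)$ converges in distribution to $\big(\mc Y_{t_1}(H_0) - \mc Y_0(H_0), \dots, \mc Y_{t_k}(H_0) - \mc Y_0(H_0)\big)$. The idea is to interpolate through the vectors
\[
 V_l^n = \big(\mc Y_{t_i}^n(G_l) - \mc Y_0^n(G_l)\big)_{i=1}^k, \qquad
 V_l = \big(\mc Y_{t_i}(G_l) - \mc Y_0(G_l)\big)_{i=1}^k,
\]
and to verify the hypotheses of the standard triangular criterion for convergence in distribution.

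First I would prove that $V_l^n \to V_l$ in distribution as $n \to \infty$, for each fixed $l$. Theorem \ref{t2} gives joint convergence of $\big(\mc Y_{t_i}^n(G)\big)_i$ for every $G \in \mc F$, and since the limiting trajectories $t \mapsto \mc Y_t(G)$ are continuous there is no issue with the choice of times. To pass from $\mc F$ to the function $G_l \notin \mc F$, I would note that $\bb E^n[(\mc Y_t^n(H))^2]$ is a Riemann sum for $\chi(\rho)\|H\|_0^2$, hence bounded uniformly in $n$; approximating $G_l$ in $\mc L^2(\bb R)$ by test functions in $\mc F$ and running a short nested exchange of limits then extends the joint convergence to $G_l$.

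Next I would control the discrepancy between $V_l^n$ and the current, uniformly in $n$. This is exactly the content of the displayed decomposition together with Theorem \ref{t8}: summing over the $k$ coordinates,
\[
 \sup_n \bb E^n\big[\big|V_l^n - \big(J_0^n(t_i)\big)_{i=1}^k\big|^2\big]
 	\leq c(\rho)\, T\, \mc E_n(H_0 - G_l, H_0 - G_l) \leq c'\, l^{1-\alpha},
\]
and since $\alpha > 1$ the bound tends to $0$ as $l \to \infty$, uniformly in $n$; Chebyshev then yields $\lim_{l\to\infty}\limsup_{n\to\infty}\bb P^n\big(|V_l^n - (J_0^n(t_i))_i| > \epsilon\big) = 0$. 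In parallel, the Cauchy estimate $E\big[\big(\mc Y_t(G_l - G_{l+m}) - \mc Y_0(G_l - G_{l+m})\big)^2\big] \leq c\, t\, l^{1-\alpha}$ recorded above shows that $V_l$ converges in $\mc L^2$, hence in distribution, to $\big(\mc Y_{t_i}(H_0) - \mc Y_0(H_0)\big)_i$ as $l \to \infty$, which is exactly the limiting object named in the statement. With $V_l^n \Rightarrow V_l$ for each $l$, with $V_l \Rightarrow \big(\mc Y_{t_i}(H_0) - \mc Y_0(H_0)\big)_i$, and with the uniform smallness of $V_l^n - (J_0^n(t_i))_i$, the triangular criterion closes the argument.

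I expect the only genuine content to sit in the uniform-in-$n$ bound above, which is precisely where the hypothesis $\alpha > 1$ is used: it is what forces $\mc E_n(H_0 - G_l, H_0 - G_l) \to 0$ as $l \to \infty$ and so licenses the truncations $\mc Y_t^n(G_l)$ as approximations of the current. For $\alpha \leq 1$ this energy does not vanish with $l$, in agreement with the earlier remark that the current through the origin is then not well defined. The remaining steps are soft, relying only on Theorem \ref{t2} and the $\mc L^2$-continuity of the fields.
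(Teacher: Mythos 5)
Your proposal is correct and follows essentially the same route as the paper: the same truncations $G_l$, the same decomposition of $J_0^n(t)-\mc Y_t^n(G_l)+\mc Y_0^n(G_l)$ controlled by Theorem \ref{t8} with the bound $\mc E_n(H_0-G_l,H_0-G_l)\leq c\,l^{1-\alpha}$, the same $\mc L^2$-extension of $\mc Y_t^n$ beyond $\mc F$, and the same Cauchy argument defining $\mc Y_t(H_0)-\mc Y_0(H_0)$. Your triangular criterion is just a cleaner formalization of the paper's ``exchange the limits in $n$ and $l$'' step, and your explicit treatment of $k$-dimensional vectors makes precise what the paper leaves as a remark.
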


Although we have proved convergence just for one-dimensional distributions, the same arguments give convergence of $k$-dimensional distributions by considering properly defined $k$-dimensional vectors. The only point left in order to close the proof of part $i)$ in Theorem \ref{t4} is to identify the process $\mc Y_t(H_0)-\mc Y_0(H_0)$ as a fractional Brownian motion. Since the finite-dimensional distributions of $\mc Y_\cdot$ are all mean-zero and Gaussian, $\mc Y_t(H_0)-\mc Y_0(H_0)$ inherits this property and it is a mean-zero, Gaussian process. Given a Gaussian, mean-zero process with stationary increments $\mc X(t)$, its variance $E[\mc X(t)^2]$ identifies the process. But a simple scaling argument shows that the distributions of $J_0^n(t)$ and $t^{1/2\alpha}J_0^n(1)$ are the same. Therefore, $E[(\mc Y_t(H_0) -\mc Y_0(H_0))^2]=ct^{1/\alpha}$, and $\mc Y_t(H_0)-\mc Y_0(H_0)$ is a fractional Brownian motion of Hurst exponent $H=1/2\alpha$.

\subsection{The subdiffusive case}

A simple computation shows that the choice of $\{G_l\}_l$ made in the previous section does not work in the subdiffusive case. The good choice is $G_l(x) = (1-W(x)/W(l))^+ \mathbf{1} (x\geq 0)$. In fact, after some computations we see that
\[
 \mc E_n(H_0-G_l, H_0-G_l) = \frac{1}{W(l)},
\]
\[
 \mc E_n(G_m-G_l,G_m-G_l) = \frac{|W(m)-W(l)|}{W(m)W(l)}.
\]

In particular, those quantities go to 0 as $l \to 0$, uniformly in $n$, since $W(l) \to 0$ as $l \to \infty$. Repeating the arguments of the previous section, it is easy to obtain the following result, which is just a restatement of Theorem \ref{t4}, part $ii)$.

\begin{theorem}
 \label{t10}
 For the zero-range process with environment $W$ and initial distribution $\nu_\rho$,
 \[
  \lim_{n \to \infty} \frac{1}{n} J_0(t n^{2+2/\alpha}) = \mc Y_t^W(H_0) -\mc Y_0^W(H_0),
 \]
in the sense of convergence of finite-dimensional distributions.
\end{theorem}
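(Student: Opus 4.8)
The plan is to follow the argument for the superdiffusive case (Theorem~\ref{t9}) almost verbatim, the only change being to replace the linear cutoff by one adapted to $W$. Write $J_x^n(t) = n^{-1} J_x(tn^{2+2/\alpha})$ and, for $l>0$, set
\[
 G_l(x) = \big(1 - W(x)/W(l)\big)^+ \mathbf{1}(x \geq 0),
\]
which converges pointwise to $H_0$ as $l \to \infty$ because $W(l) \to +\infty$. Although $G_l$ is not a test function in $\mc F$, the variables $\mc Y_t^n(G_l)$ and $\mc Y_t^W(G_l)$ are well defined since both fields extend continuously to $\mc L^2(\bb R)$. First I would record the exact decomposition
\[
 J_0^n(t) - \mc Y_t^n(G_l) + \mc Y_0^n(G_l) = \int_0^t \frac{1}{n^{1/2}} \sum_{x \in \bb Z} \big(g(\eta_s^n(x)) - \varphi(\rho)\big)\mc L_n(H_0 - G_l)\, ds + \mc M_t^n(H_0 - G_l),
\]
justified exactly as before using the relation between $\eta_s^n$ and the bond currents together with the explicit compensator of $J_x^n(t)$.

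Next I would bound the right-hand side: applying the energy estimate of Theorem~\ref{t6} to the integral term and the explicit quadratic variation to the martingale term gives
\[
 \bb E^n\big[(J_0^n(t) - \mc Y_t^n(G_l) + \mc Y_0^n(G_l))^2\big] \leq c(\rho)\, t\, \mc E_n(H_0 - G_l, H_0 - G_l),
\]
for a constant $c(\rho)$ independent of $l$, $n$, $t$. The decisive computation is that $H_0 - G_l$ has $W$-derivative $1/W(l)$ on $(0,l)$ and $0$ elsewhere, so that $\mc E_n(H_0 - G_l, H_0 - G_l) = 1/W(l)$ for \emph{every} $n$; hence the bound tends to $0$ as $l \to \infty$, uniformly in $n$. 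This is exactly the step where the $W$-adapted cutoff is indispensable: the naive choice $(1-x/l)^+$ would produce an energy that is not controlled against the dense jumps of $W$, so uniformity in $n$ would fail. I conclude that $\mc Y_t^n(G_l) - \mc Y_0^n(G_l) \to J_0^n(t)$ as $l \to \infty$, uniformly in $n$.

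Finally, running the same computation for the limiting Ornstein--Uhlenbeck process and using $\mc E_n(G_m - G_l, G_m - G_l) = |W(m) - W(l)|/(W(m)W(l)) \to 0$, the sequence $\{\mc Y_t^W(G_l) - \mc Y_0^W(G_l)\}_l$ is Cauchy in $\mc L^2$, which defines the limiting variable $\mc Y_t^W(H_0) - \mc Y_0^W(H_0)$ even though the two summands are separately meaningless. By Theorem~\ref{t3} the fields $\mc Y_\cdot^n$ converge to $\mc Y_\cdot^W$, and since $\mc Y_t^n(\cdot)$ is uniformly continuous in $\mc L^2$ one has $\mc Y_t^n(G_l) \to \mc Y_t^W(G_l)$ for each fixed $l$; the uniform-in-$n$ approximation then permits exchanging the limits $n \to \infty$ and $l \to \infty$, yielding the stated finite-dimensional convergence of $n^{-1}J_0(tn^{2+2/\alpha})$ to $\mc Y_t^W(H_0) - \mc Y_0^W(H_0)$. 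Joint convergence over several times follows from the same estimates applied to vector-valued combinations, as in the superdiffusive case. The only genuinely delicate point, already resolved by the $W$-adapted cutoff together with Theorem~\ref{t6}, is securing this uniform-in-$n$ control of the cutoff tail against the degenerate environment $W$.
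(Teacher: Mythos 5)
Your proposal is correct and follows essentially the same route as the paper: the $W$-adapted cutoff $G_l(x)=(1-W(x)/W(l))^+\mathbf 1(x\geq 0)$, the exact energies $\mc E_n(H_0-G_l,H_0-G_l)=1/W(l)$ and $\mc E_n(G_m-G_l,G_m-G_l)=|W(m)-W(l)|/(W(m)W(l))$, and then the superdiffusive argument (decomposition, Theorem~\ref{t6}, uniform-in-$n$ approximation, exchange of limits) repeated verbatim. The only difference is that you spell out the steps the paper compresses into ``repeating the arguments of the previous section.''
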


In this theorem, $\mc Y_\cdot^W$ is the solution of (\ref{ec4.5}) and $\mc Y_t^W(H_0)-\mc Y_0^W(H_0)$ is defined by continuity as the limit of $\mc Y_t^W(G_l)-\mc Y_0^W(G_l)$ when $l \to \infty$. The idea now is to use Lemma \ref{l10} to get the annealed result stated in part $iii)$ of Theorem \ref{t4}. Despite the fact that Corollary \ref{c1} is weaker than Theorem \ref{t2}, in order to use Lemma \ref{l10} only convergence in the sense of finite-dimensional distributions is needed. We state the corresponding result for $J_0(t)$ as a corollary.

\begin{corollary}
 \label{c2}
 Under the annealed law $P \otimes \bb P^n$,
 \[
  \lim_{n \to \infty} \frac{1}{n} J_0(t n^{2+2/\alpha}) = \mc X(t),
 \]
where $\mc X(\cdot)$ has the same distribution of $\mc Y_\cdot^W(H_0) -\mc Y_0(H_0)$ with respect to the annealed law.
\end{corollary}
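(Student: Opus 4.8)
The plan is to repeat, for the current, the argument that deduced Corollary \ref{c1} from Theorem \ref{t3}, replacing the quenched density statement by the quenched current statement of Theorem \ref{t10}. The structural fact that makes this possible is the same one used in Section \ref{s3.4}: for each fixed $n$ the environment $\xi^n=\{\xi_x^n\}_x$ given by $\xi_x^n = n^{1/\alpha}\{W((x+1)/n)-W(x/n)\}$ has exactly the law of the i.i.d.\ environment $\xi$. Consequently the annealed law of $n^{-1}J_0(tn^{2+2/\alpha})$ computed with $\xi^n$ coincides, under $P\otimes\bb P^n$, with the annealed law computed with $\xi$, so it suffices to establish convergence for the former. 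The whole point of working with $\xi^n$ rather than $\xi$ is that $\xi^n$ is coupled to a single realization of $W$, which is precisely the setting in which Lemma \ref{l10} can be invoked.

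I would first fix a finite collection of times $0\le t_1\le\cdots\le t_k$ and set
\[
 b_n=\frac1n\big(J_0(t_1 n^{2+2/\alpha}),\dots,J_0(t_k n^{2+2/\alpha})\big)\in\bb R^k,
\]
the rescaled current of the zero-range process with environment $\xi^n$. Exactly as in the proof of Corollary \ref{c1}, I take $a_n$ to be the rescaled partial-sum process $x\mapsto n^{-1/\alpha}\sum_{i=1}^{[nx]}\xi_i^n$, which determines $\xi^n$ through its increments and converges almost surely to $a:=W$. Thus conditioning on $a_n$ is the same as conditioning on the environment $\xi^n$, and $(b_n\mid a_n)$ is precisely the quenched law of the rescaled current.

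The quenched input is then supplied by Theorem \ref{t10}: for $P$-almost every realization of $W$ the hypotheses of Section \ref{s2} hold, and the theorem gives, in the sense of finite-dimensional distributions, $(b_n\mid a_n)\to(b\mid a)$ with
\[
 b=\big(\mc Y_{t_1}^W(H_0)-\mc Y_0^W(H_0),\dots,\mc Y_{t_k}^W(H_0)-\mc Y_0^W(H_0)\big),
\]
each coordinate being defined by the $\mc E(\cdot,\cdot)^{1/2}$-continuity procedure used in Theorem \ref{t10}. Since $b_n$ takes values in the $\sigma$-compact space $\bb R^k$ and $a_n\to a$ almost surely, Lemma \ref{l10} applies and yields convergence in distribution of the pair $(a_n,b_n)$ to $(a,b)$, hence of $b_n$ to $b$ under the annealed law. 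By construction the law of $b$ is $P\otimes\bb P^W$ evaluated on $\mc Y_\cdot^W(H_0)-\mc Y_0^W(H_0)$, which is the finite-dimensional law of $\mc X$; undoing the identification $\xi^n\stackrel{d}{=}\xi$ completes the proof.

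The main obstacle is not any single estimate but the careful handling of the conditional structure: one must verify that conditioning on $a_n$ genuinely pins down the environment (so that $(b_n\mid a_n)$ is the quenched law), that the $P$-null set of bad $W$'s does not disturb the weak-convergence conclusion of Lemma \ref{l10}, and that the finite-dimensionality of $b_n$—indispensable for the tightness step inside that lemma—is preserved by testing only finitely many times. Finally, the self-similarity index asserted in Theorem \ref{t4}(iii) is obtained by a scaling argument analogous to the one closing the superdiffusive case: choosing $n'\approx c^{\alpha/(2+2\alpha)}n$ so that $(n')^{2+2/\alpha}\approx c\,n^{2+2/\alpha}$ gives $n^{-1}J_0(ct\,n^{2+2/\alpha})\approx (n'/n)\,(n')^{-1}J_0(t(n')^{2+2/\alpha})$, whence $\mc X(ct)\stackrel{d}{=}c^{\alpha/(2+2\alpha)}\mc X(t)$, using the self-similarity $W(\lambda\cdot)\stackrel{d}{=}\lambda^{1/\alpha}W(\cdot)$ of the $\alpha$-stable subordinator; this scaling exponent matches the Hurst exponent $\alpha/(2+2\alpha)$.
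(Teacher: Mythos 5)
Your argument is correct and is essentially the paper's own: the paper obtains Corollary \ref{c2} by combining the quenched statement of Theorem \ref{t10} with Lemma \ref{l10}, applied to finite-dimensional vectors of rescaled currents (so that $b_n\in\bb R^k$) and to the environment $a_n$ converging almost surely to $W$, exactly as you do, with the identification $\xi^n\stackrel{d}{=}\xi$ transferring the result back to the original i.i.d.\ environment. Your closing paragraph on self-similarity and the Hurst exponent is not part of this corollary in the paper (it belongs to the proof of Theorem \ref{t4}, part $iii)$), but it is consistent with the scaling argument given there.
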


Remember that any result in distribution with respect to $P \otimes \bb P^n$ has a counterpart for the laws $P^\xi \otimes \bb P^n$. In particular, in order to prove part $iii)$ of Theorem \ref{t4}, we are just left to prove that the process $\mc X(t)$ defined above is a fractional Brownian motion of Hurst index $\alpha/(2+2\alpha)$. The self-similarity follows at once from the scale invariance of $\{\xi_x\}_x$ and $J_0^n(t)$. Of course, for each fixed $W$, $\mc X^W(t)$ is a Gaussian random variable. But it is not clear why the averaged law of $\mc X^W(t)$ is also Gaussian. The key point is that the ``Gaussian character'' of $\mc X^W(t)$ does not depend on $W$. In fact, by the  construction in Section \ref{s2}, the process $\mc Y_\cdot^W$ has Gaussian finite distributions for any $W$. In a more formal setting, we can construct the processes $(W, \mc Y_\cdot^W)$ in a probability space big enough such that there are random variables $\sigma(W,t)$, $\zeta(W,t)$ such that the family $\{\zeta(W,t)\}_W$ is i.i.d. with common distribution equal to a normal distribution of mean zero and variance 1, and such that $\mc X^W(t) = \sigma(W,t)\zeta(W,t)$ for any $W$, $t$. In other words, for a fixed time $t$, all the dependence of $\mc X^W(t)$ in $W$ is encoded on its variance. From this decomposition plus the fact that the sum of two independent, Gaussian variables is also Gaussian, it is clear that $\mc X(t)$ is Gaussian. Since any self-similar, Gaussian process is a fractional Brownian motion, we have finished the proof of Theorem \ref{t4}, part $iii)$.

\section{The simple exclusion process with variable diffusion coefficient}
\label{s7}

In this section we obtain a central limit theorem for a tagged particle in the simple exclusion process with variable diffusion coefficient as an application of the results on this article in the subdiffusive case.

Consider a system $\{x_i(t); i \in \bb Z\}$ of continuous-time, interacting random walks on the one-dimensional lattice $\bb Z$, and let $\lambda =\{\lambda_i; i \in \bb Z\}$ be a given sequence of positive numbers. The dynamics of these particles is the following. The particle $x_i$ waits an exponential time of rate $2 \lambda_i$, at the end of which it chooses one of its two neighbors with equal probability. If there is no other particle $x_j$ at the chosen site at that moment, the particle jumps to that site; if the site is occupied the particle stays where it is. In any case a new exponential time of rate $2\lambda_i$ starts afresh. This happens independently for each particle. Notice that particles only interact through the so-called {\em exclusion rule}. This dynamics corresponds to a Markov process $\mathbf x(t)$ defined on the state space $\Omega_{ex}= \{\mathbf x \in \bb Z^{\bb Z}; x_i \neq x_j \text{ if } i \neq j\}$ and generated by the operator
\begin{align*} 
L^{ex} f(\mathbf x)
	& \sum_{i \in \bb Z} \lambda_i \Big\{ \mathbf 1(\mathbf x+e_i \in \Omega_{ex}) \big[f(\mathbf x +e_i) - f(\mathbf x)\big] \\
		&+ \mathbf 1 (\mathbf x -e_i \in \Omega_{ex} ) \big[f(\mathbf x -e_i) -f(\mathbf x) \big] \Big\}, 
\end{align*}
where $\{e_i;i \in \bb Z\}$ is the canonical basis in $\bb Z^{\bb Z}$. The number $\lambda_i$ is interpreted as the {\em diffusion coefficient} of particle $x_i$. We call the process $\mathbf x(t)$ the simple exclusion process with variable diffusion coefficient. Notice that when the sequence $\{\lambda_i\}_i$ is constant, $\mathbf x(t)$ is simply the usual simple exclusion process, but labeled: the so-called {\em stirring process}. Notice that the relative ordering of particles is preserved by the dynamics. Then, without loss of generality, we assume that $x_i(t) <x_{i+1}(t)$ for any $i \in \bb Z$ and any $t \geq 0$.  We call particle $x_0$ the {\em tagged particle}. We want to study the asymptotic behavior of $x_0(t)$. In particular, we want to obtain a central limit theorem for this particle. Again without loss of generality, we can assume that $x_0(0)=0$, that is, that the tagged particle is at the origin for $t=0$. 

The relation between this model and the model studied in this article comes from the following observation. Define, for each $i \in \bb Z$ and each $t \geq 0$, $\eta_t(i) = x_{i+1}(t)-x_i(t)-1$. A simple computation shows that in fact the process $\eta_t = \{\eta_t(i); i \in \bb Z\}$ is a zero-range process with interaction rate $g(n) = \mathbf 1(n>0)$ and transition probability $p(i,i-1) =p(i-1,i) = \lambda_i$. Remarkably, the position $x_0(t)$ of the tagged particle is equal to the current $J_0(t)$ through the bond $\<-1,0\>$ for $\eta_t$. Now take $\{\lambda_i\}_i$ as a realization of a sequence of i.i.d. random variables with common distribution $\lambda$, such that $\lambda^{-1}$ is in the domain of attraction of an $\alpha$-stable law, $0<\alpha<1$. Then the process $\eta_t$ is just the zero-range process with random environment defined in Section \ref{s3.4}. 

Consider the {\em environment process} $\mathbf X^0(t)$ defined by $x^0_i(t)= x_i(t) -x_0(t)$. In other words, $\mathbf X^0(t)$ corresponds to the relative position of the particles with respect to the tagged particle. Knowing the relation between $\mathbf X^0(t)$ and $\eta_t$, it is not difficult to identify the invariant measures for $\mathbf X^0(t)$. In fact, since the invariant measures of $\eta_t$ are products of geometric distributions (for our particular choice of $g(\cdot)$), we conclude that the distance between particles is a geometric distribution. Let us denote by $\mu_\rho$ the invariant measure associated to $\mathbf x(t)$ corresponding to $\nu_\rho$, that it, $\mu_\rho\{x_{i+1}-x_i=k\}= \nu_\rho\{\eta(i) = k-1\}$. It is not difficult to see that the density of particles according to $\mu_\rho$ is equal to $1/(1+\rho)$. 

Just translating the results of Theorem \ref{t4} into the language of the simple exclusion process with variable diffusion coefficient, we obtain the following result.

\begin{theorem}
 \label{t11}
 Let $\mathbf x(t)$ be a simple exclusion process with i.i.d., $\alpha$-stable diffusion coefficients. Assume that $\mathbf X^0(t)$ is distributed according to the invariant measure $\mu_\rho$. Then, the position $x_0(t)$ of the tagged particle satisfies the following central limit theorem:
 \[
  \lim_{n \to \infty} \frac{1}{n} x_0(t n^{2+2/\alpha}) = \mc X(t)
 \]
in the sense of convergence of finite-dimensional distributions, when averaged over the diffusion coefficients. Here $\mc X(t)$ is a fractional Brownian motion of Hurst exponent $\alpha/(2+2\alpha)$.
\end{theorem}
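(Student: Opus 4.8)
The plan is to prove Theorem \ref{t11} as a direct translation of Theorem \ref{t4}, part $iii)$, through the Rost-Vares identification of the tagged particle with the current. First I would record the two structural facts already observed above: the increment process $\eta_t(i) = x_{i+1}(t) - x_i(t) - 1$ is a zero-range process with interaction rate $g(n) = \mathbf 1(n>0)$ and symmetric transition rate $p(i,i-1) = p(i-1,i) = \lambda_i$, and the position of the tagged particle coincides with the current through the bond $\langle -1, 0 \rangle$, namely $x_0(t) = J_0(t)$ for $\eta_t$. Because we start with $x_0(0)=0$ and $\mathbf X^0(t)$ in equilibrium, the initial distribution of $\eta_t$ is exactly $\nu_\rho$, where $\mu_\rho$ and $\nu_\rho$ are matched by $\mu_\rho\{x_{i+1}-x_i = k\} = \nu_\rho\{\eta(i)=k-1\}$.

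Next I would verify that this $\eta_t$ falls into the class of zero-range processes with random environment studied in Section \ref{s3.4}. Setting $\xi_x = \lambda_x^{-1}$ (after a harmless shift of the index), the transition rates become $p(x,x+1) = \xi_x^{-1}$, exactly as in that section. The interaction rate $g(n) = \mathbf 1(n>0)$ is the case $g \equiv 1$ on positive integers treated throughout: it satisfies $g(0)=0$, $g(n) \leq n$, the Lipschitz bound, and yields geometric invariant measures with $\varphi_c > 0$, so the whole machinery applies. Since $\lambda^{-1}$ (hence $\xi$) lies in the domain of attraction of an $\alpha$-stable law with $0<\alpha<1$, the invariance principle for the partial sums $n^{-1/\alpha}\sum_{i=1}^{[nx]}\xi_i$ still produces the $\alpha$-stable subordinator $W$; here I would invoke the final remark of the Introduction, which extends all subdiffusive results to any environment admitting a version converging almost surely to such a $W$, so strict stability is not required.

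With these identifications in place, Theorem \ref{t4}, part $iii)$, applies verbatim to the current of $\eta_t$. Since $x_0(tn^{2+2/\alpha}) = J_0(tn^{2+2/\alpha})$ pathwise for every realization of $\{\lambda_i\}$, the annealed convergence
\[
\lim_{n\to\infty} \frac{1}{n} J_0(tn^{2+2/\alpha}) = \mc X(t)
\]
with respect to $P \otimes \bb P^n$ transfers directly to $n^{-1}x_0(tn^{2+2/\alpha})$, the averaging over $\xi$ being precisely the averaging over the diffusion coefficients $\lambda_i$. The limit $\mc X(t)$ is the fractional Brownian motion of Hurst exponent $\alpha/(2+2\alpha)$ identified there, which closes the argument.

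The main obstacle is the one already confronted in Section \ref{s6}: the current identity $x_0(t) = \sum_{y>0}(\eta_t(y)-\eta_0(y))$ is not literally meaningful in equilibrium, since the sum is not absolutely convergent. The rigorous content hides in defining $\mc Y_t^W(H_0) - \mc Y_0^W(H_0)$ by continuity as the limit of $\mc Y_t^W(G_l) - \mc Y_0^W(G_l)$ along the cutoffs $G_l(x) = (1 - W(x)/W(l))^+\mathbf 1(x\geq 0)$, and in checking, via the energy estimate of Theorem \ref{t6}, that $n^{-1}J_0(tn^{2+2/\alpha})$ is approximated uniformly in $n$ by these differences. Since that work is already carried out in the proof of Theorem \ref{t4}, at the present stage only the bookkeeping of the correspondence between the two models remains, and no new estimate is needed.
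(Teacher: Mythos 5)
Your proposal is correct and follows essentially the same route as the paper: identify the increment process as the zero-range process with environment $\xi_x=\lambda_x^{-1}$, identify $x_0(t)$ with the current $J_0(t)$ through the bond $\langle -1,0\rangle$, match $\mu_\rho$ with $\nu_\rho$, and invoke Theorem \ref{t4}, part $iii)$. The paper's own proof is exactly this translation, so nothing further is needed.
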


\section*{Acknowledgements}
Part of this work was done during the author's 
stay at the Institut Henri Poincar\'e, Centre 
Emile Borel. The author thanks this institution 
for hospitality and support.
 M. J. was supported by the Belgian Interuniversity Attraction Poles Program P6/02, through the network NOSY (Nonlinear systems, stochastic processes and statistical mechanics). M.J. would like to thank T. Ambj\"ornsson for sending an early version of \cite{ASLL} and for helpful correspondence.

\end{document}